\newtheorem{theorem}{Theorem}[section]
\newtheorem{lemma}[theorem]{Lemma}
\newtheorem{proposition}[theorem]{Proposition}
\newtheorem{conjecture}[theorem]{Conjecture}
\theoremstyle{remark}
\newtheorem{remark}[theorem]{Remark}
\newtheorem{question}[theorem]{Question}
\DeclareMathOperator{\id}{id}
\DeclareMathOperator{\rank}{rank}
\newcommand{\Z}{\mathbb{Z}}
\newcommand{\R}{\mathbb{R}}
\newcommand{\CKh}{\operatorname{CKh}}
\newcommand{\HKh}{\operatorname{Kh}}
\newcommand{\rKh}{\tilde{\operatorname{Kh}}}
\newcommand{\rankCob}{\operatorname{rank}_{\mathcal C ob}}
\newcommand{\Cob}{\mathcal{C}ob_\bullet}
\newcommand{\Cobl}{\mathcal{C}ob_{\bullet/l}}
\newcommand{\Mlex}{M_{\operatorname{lex}}}
\newcommand{\term}[1]{\textit{#1}} 
\newcommand{\grayCirc}{\begin{tikzpicture}[baseline={(0,-0.1)}]
    \draw[thinredline] (0,0) circle (0.125);
\end{tikzpicture}}
\newcommand{\blackCirc}{\begin{tikzpicture}[baseline={(0,-0.1)}]
    \draw[thinblueline] (0,0) circle (0.125);
\end{tikzpicture}}
\newcommand{\crosNeg}
    {\begin{tikzpicture}[baseline={(0,0.05)},scale=0.3]
    \draw[thick] (0,0)  -- (0.35,0.35);
    \draw[thick] (0.65,0.65)  -- (1,1);
    \draw[thick] (1,0) -- (0,1);
\end{tikzpicture}}
\newcommand{\crosPos}{
    \begin{tikzpicture}[baseline={(0,0.05)},scale=0.3]
    \draw[thick] (0,0)  -- (1,1);
    \draw[thick] (0,1)  -- (0.35,.65);
    \draw[thick] (.65,.35) -- (1,0);
\end{tikzpicture}
}
\newcommand{\crosNg}[1]
    {\begin{tikzpicture}[baseline={(0,0.05)},scale=#1]
    \draw[thick] (0,0)  -- (0.35,0.35);
    \draw[thick] (0.65,0.65)  -- (1,1);
    \draw[thick] (1,0) -- (0,1);
\end{tikzpicture}}
\newcommand{\crosPs}[1]{
    \begin{tikzpicture}[baseline={(0,0.05)},scale=#1]
    \draw[thick] (0,0)  -- (1,1);
    \draw[thick] (0,1)  -- (0.35,.65);
    \draw[thick] (.65,.35) -- (1,0);
\end{tikzpicture}
}
\newcommand{\lineup}[1]{
\begin{tikzpicture}[baseline={(0,0.05)},scale=#1]
\draw[thick] (0,0)  -- (0,1);
\end{tikzpicture}
}
\newcommand{\orCrosNeg}
    {\begin{tikzpicture}[baseline={(0,0.05)},scale=0.3]
    \draw[thick] (0,0)  -- (0.35,0.35);
    \draw[thick, -{Stealth[length=1mm]}] (0.65,0.65)  -- (1,1);
    \draw[thick,-{Stealth[length=1mm]}] (1,0) -- (0,1);
\end{tikzpicture}}
\newcommand{\orCrosPos}{
    \begin{tikzpicture}[baseline={(0,0.05)},scale=0.3]
    \draw[thick,-{Stealth[length=1mm]}] (0,0)  -- (1,1);
    \draw[thick,{Stealth[length=1mm]}-] (0,1)  -- (0.35,.65);
    \draw[thick] (.65,.35) -- (1,0);
\end{tikzpicture}
}
\newcommand{\smoothingZero}{
\begin{tikzpicture}[baseline={(0,0.05)},scale=0.3]
    \draw[thick] (0,0) to[bend right=50] (0,1);
    \draw[thick] (1,0) to[bend left=50] (1,1);
\end{tikzpicture}
}
\newcommand{\smoothingOne}{
\begin{tikzpicture}[baseline={(0,0.05)},scale=0.3]
    \draw[thick] (0,0) to[bend left=50] (1,0);
    \draw[thick] (0,1) to[bend right=50] (1,1);
\end{tikzpicture}
}
\newcommand{\drawblack}[4] {
\draw[->] (#1) -- node [above, scale = 0.7, #3] {$#4$} (#2);
}
\newcommand{\drawblackw}[4] {
\draw[-, line width = 6pt, color = white] (#1) -- (#2);
\draw[->] (#1) -- node [above, scale = 0.7, #3] {$#4$} (#2);
}
\title{On computational complexity of Khovanov homology}
\author{Tuomas Kelom\"aki and Dirk Sch\"utz}
\begin{document}
\maketitle

\begin{abstract}
Computing the Jones polynomial of general link diagrams is known to be $\#$P-hard, while restricting the computation to braid closures on fixed number of strands allows for a polynomial time algorithm. We investigate polynomial time algorithms for Khovanov homology of braids and show that for $3$-braids there is one. In contrast, we show that Bar-Natan's scanning algorithm runs in exponential time when restricted to simple classes of $3$-braids. For more general braids, we obtain that a variation of the scanning algorithm computes the Khovanov homology for a bounded set of homological degrees in polynomial time. We also prove upper and lower bounds on the ranks of Khovanov homology groups.
\end{abstract}


\section{Introduction}

The original construction of Khovanov presented a link invariant, Khovanov homology, which is theoretically computable for any link diagram \cite{MR1740682}, but the number of steps required to do so grows exponentially in the number of crossings. A few years later, Bar-Natan's work on tangles made it practically possible to compute Khovanov homology for links with 50 or more crossings \cite{MR2320156}, which would be impossible from the defintion. This advancement in algorithms has opened up new avenues in $4$-dimensional topology: In \cite{MR2657647} a possible (yet to be realized) attack for disproving the smooth $4$-dimensional Poincaré conjecture was formulated and in \cite{zbMATH07168645} a computer calculation helped to prove that the Conway knot is not smoothly slice.

The Jones polynomial $J_L(q)$ of a link $L$ can easily be recovered from its Khovanov homology $\HKh(L)$ and hence Khovanov homology is at least as hard to compute as the Jones polynomial. By reducing the computation of the Jones polynomial of a link diagram $L$ to the evaluation of the  Tutte polynomial of a related planar graph, Jaeger, Vertigan and Welsh showed that $J_L(q)$ is $\#P$-hard to compute with respect the number of crossings on $L$ \cite{zbMATH00006335}. It follows that the existence of a polynomial time algorithm which, given a link diagram $L$, computes the Khovanov homology $\HKh(L)$ is very unlikely. On the other hand, Morton and Short showed that the Jones polynomial of a closed braid on a fixed number of strands can be computed in polynomial time \cite{MR1041170}. This led to the following conjecture which provides the main motivation for this article.
\begin{conjecture}[Przytycki, Silvero \cite{zbMATH07862431}]\label{conjecture: Kh braid poly}
    Computing Khovanov homology of a closed braid with fixed number of strands has polynomial time complexity with respect to the number of crossings.
\end{conjecture}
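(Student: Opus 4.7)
The plan is to work in Bar-Natan's dotted cobordism category $\Cob$ on $2n$ boundary points, where $n$ is the fixed number of braid strands, and to compute the Khovanov complex of the braid, regarded as an $(n,n)$-tangle, incrementally. Objects in this category are formal linear combinations of flat $2n$-tangles, possibly decorated with free-floating circles; the set of isotopy classes of circle-free flat $2n$-tangles has size equal to the Catalan number $C_n$, which is constant once $n$ is fixed. The algorithm processes the $c$ crossings of the braid one at a time, at each step tensoring with the two-term elementary complex of the next crossing and then applying two standard simplifications: delooping, which splits every free circle off as a sum of two shifted copies, and Gaussian elimination, which cancels any isomorphism appearing in the differential. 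After all crossings have been processed, the closure is computed by evaluating the resulting cobordism complex through the standard Frobenius algebra, and the homology is extracted via Smith normal form; both steps are polynomial once the complex size is polynomial.

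The central claim to prove is that the simplified complex produced by this process has total rank polynomial in $c$. Because only $C_n$ flat tangle types can appear, it suffices to show that for each type the multiplicity surviving delooping and elimination is bounded by a polynomial in $c$ whose degree depends only on $n$. A natural route is induction on the crossing just added: one must control how the delooping of the newly produced circles, combined with the cancellations of the new isomorphisms in the differential, modifies the multiplicities of all tangle types. For $n=3$ the bookkeeping involves only five flat $6$-tangle types, and one can hope to carry out this analysis essentially by hand, possibly leveraging a band-generator or Murasugi-type normal form for $3$-braids together with the known periodicity of Khovanov homology under the full twist, so that arbitrarily long words are rewritten as bounded powers of a simple building block with a tail of bounded length.

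The main obstacle is precisely the point flagged in the abstract: the plain left-to-right scanning algorithm, even augmented with delooping and Gaussian elimination, provably runs in exponential time on simple families of $3$-braids. Hence the desired polynomial bound cannot be established by naive induction on the scanning order; one must instead reorder the simplification, allow multi-step homotopy equivalences that are invisible to single-crossing Gaussian elimination, or assemble the complex by a divide-and-conquer strategy that exploits the algebraic structure of the braid group (for example, the Hecke or Temperley--Lieb algebra action on the $C_n$-dimensional space of flat tangle types). Finding such a structural control, and verifying that it is preserved under all of the reductions, is likely the most delicate step of the proof, and the one that does not obviously generalise from $n=3$ to arbitrary $n$.
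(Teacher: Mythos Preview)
The statement you are attempting is a \emph{conjecture}, and the paper does not prove it in full generality; it establishes only the $3$-braid case (Theorem~\ref{thm:alg3braids}). Your proposal, as written, is a sketch of a programme rather than a proof, and you correctly identify the main obstacle in your final paragraph: the scanning algorithm with delooping and Gaussian elimination runs in exponential time already on simple $3$-braid families. The paper proves exactly this (Theorem~\ref{thm:slowscanning}), so the ``central claim'' you propose to establish---that the simplified complex has total rank polynomial in $c$---is false. The total Betti number of the weaving links $W(3,n)$ grows exponentially, and any algorithm that produces a basis for a chain-homotopy representative of the Khovanov complex must therefore be exponential on these inputs. No reordering of eliminations or divide-and-conquer variant can repair this, because the output itself is exponentially large.

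The paper's proof of the $3$-braid case takes a completely different route, avoiding chain-level computation entirely. It first puts the braid word into Murasugi normal form $\Delta^{2k}v$ in linear time (Proposition~\ref{prop:alg_normalform}). For the torus-like classes $\Omega_0,\ldots,\Omega_3$ the homology is read off from known closed formulas. For $\Omega_4,\Omega_5,\Omega_6$ the paper shows (Theorems~\ref{thm:case_omega_4}--\ref{thm:case_omega_6}, drawing on \cite{schuetz2025kh3braids}) that $\HKh(L_{\Delta^{2k}v})$ decomposes, up to a bounded number of exceptional bidegrees, as a direct sum of the torus-link homology $\HKh(T(3,3k))$ and the homology of the alternating closure $L_v$. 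The former is given by explicit formulas; the latter is determined by the Jones polynomial and signature of $L_v$ via Lemma~\ref{lem:red2unred}, and the Jones polynomial of a braid closure on a fixed number of strands is computable in polynomial time by Morton--Short. The crucial point is that the exponentially many generators in $\HKh(L_v)$ are never enumerated: they are encoded implicitly as coefficients of a polynomial of length $O(|w|)$. Your hint about Murasugi normal forms and periodicity under the full twist is in the right direction, but the paper uses these to produce closed-form homology \emph{groups}, not to control a chain-level simplification.
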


The state-of-the-art for computing Khovanov homology is given by Bar-Natan's scanning algorithm \cite{MR2320156}. However, there exist relatively simple links which make this algorithm very slow.

\begin{theorem}\label{thm:slowscanning}
The scanning algorithm and the divide-and-conquer algorithm of \cite{MR2320156} run in exponential time with respect to the number of crossings in a link diagram, even when restricted to positive $3$-braids, or to alternating $3$-braids.
\end{theorem}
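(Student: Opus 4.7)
My plan is to find explicit families of 3-braids on which the scanning algorithm is forced to store a chain complex of exponential size after processing some initial segment of the word. Bar-Natan's scanning algorithm maintains, at every step, a homotopy-reduced complex in $\Cobl$ of the partially-processed tangle, and its running time on that step scales with the number of generators of this complex. Thus any exponential lower bound on the minimal reduced complex of an intermediate tangle yields the claimed runtime lower bound. The divide-and-conquer variant similarly stores the reduced complex of each half of the braid and is handled by the same argument applied to initial (or terminal) halves.

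To make this precise I would introduce the invariant $\rankCob(T)$, the minimum over all chain complexes in $\Cobl$ homotopy-equivalent to the Bar-Natan bracket $\CKh(T)$ of the total number of crossingless-tangle generators. Delooping and Gaussian elimination together achieve this minimum, so after scanning the first $k$ crossings of the braid word the algorithm stores exactly $\rankCob(T_k)$ summands, where $T_k$ denotes the sub-tangle consisting of those $k$ crossings. The theorem therefore reduces to showing that $\rankCob(T_{k(n)})$ grows exponentially in $n$ along suitable families. I would take initial sub-tangles of $\beta_n = (\sigma_1\sigma_2)^n$ in the positive case and of $\beta_n = (\sigma_1\sigma_2^{-1})^n$ in the alternating case, with $k(n)$ of the order of $n$.

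The technical heart of the proof, and the main obstacle, is the exponential lower bound on $\rankCob(T_k)$. My plan is to probe $T_k$ with a finite collection of 3-tangle caps $C$: pairing the minimal complexes of $C$ and $T_k$ gives
\[
\rank \HKh(\widehat{CT_k}) \;\leq\; \rankCob(C) \cdot \rankCob(T_k),
\]
where $\widehat{CT_k}$ denotes the link obtained by gluing $C$ to $T_k$ and closing. Summing over caps of uniformly bounded $\rankCob$ reduces the task to producing caps whose closures have Khovanov rank growing exponentially in $k$. In the alternating case I would choose caps so that the closure remains alternating; thinness of Khovanov homology then identifies the rank with a constant multiple of the determinant, whose exponential growth along $(\sigma_1\sigma_2^{-1})^n$ follows from a standard spectral-radius computation for the reduced Burau representation of $B_3$ specialised at $t=-1$. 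The positive case is more delicate since the natural closure of $(\sigma_1\sigma_2)^n$ is $T(3,n)$, whose Khovanov rank is only polynomial; here I would either insert a small auxiliary tangle before closing to produce two-bridge or cabled closures of exponentially growing rank, or refine the inequality above to a bigraded version that only requires exponential growth of some coefficient of the Poincaré polynomial of the capped closure.

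With the exponential lower bound on $\rankCob(T_{k(n)})$ in place, the theorem follows for both algorithms: scanning materialises a complex of this size at the corresponding step, and divide-and-conquer stores one at least this large for one of the two halves of the braid.
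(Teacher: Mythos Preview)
Your overall framework---bounding the algorithm's intermediate or final complex size from below by the total Khovanov rank of a suitable closed link---is sound, and for the alternating case your plan essentially works: closing $(\sigma_1\sigma_2^{-1})^n$ gives the weaving link $W(3,n)$, which is alternating with determinant growing like $\bigl(\tfrac{3+\sqrt{5}}{2}\bigr)^n$, hence exponential total Betti number. This is exactly what the paper does, though the paper phrases it more directly: the final output of the scanning (or divide-and-conquer) algorithm is already a basis for a complex homotopy equivalent to $\CKh$ of the \emph{closed} link, so its size is at least the total Betti number. No intermediate-tangle capping or $\rankCob$ machinery is needed.

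The positive case, however, has a genuine gap. Your candidate $(\sigma_1\sigma_2)^n$ closes to the torus link $T(3,n)$, whose Khovanov homology has only linearly many generators; indeed the paper remarks elsewhere that the scanning algorithm runs in \emph{polynomial} time on $3$-strand torus links, so no capping of this open tangle can yield an exponential lower bound on its minimal complex. Your proposed fixes (auxiliary tangles before closing, bigraded refinements) do not address this: any cap $C$ still gives $\rank\HKh(\widehat{CT_k})\le \rankCob(C)\cdot\rankCob(T_k)$, and if $\rankCob(T_k)$ is polynomial, the left side is too. The paper's resolution is to change the braid word: observe that the positive word $v=\sigma_1\sigma_2\sigma_1^2\sigma_2^2$ equals $\Delta^2\sigma_1^{-1}\sigma_2$ in $\mathbf{B}_3$, so $v^n=\Delta^{2n}(\sigma_1^{-1}\sigma_2)^n$ is a positive $3$-braid word whose closure has, by Theorem~\ref{thm:case_omega_6}, Khovanov homology containing that of $W(3,n)$ as a summand, hence exponential rank. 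This braid-group identity is the missing ingredient in your positive case.
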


The reason for the slow runtime of these algorithms is that there are such $3$-braids whose total Betti numbers grow exponentially with respect to the number of crossings. Any algorithm, which works with a basis on the chain complex, will therefore necessarily have an exponential runtime. As with classical algorithms, the large Betti numbers also played a significant role in the recent study of quantum algorithms for Khovanov homology \cite{schmidhuber2025quantumalgorithmkhovanovhomology}. The exponential Betti numbers make it impossible for the main algorithm of \cite{schmidhuber2025quantumalgorithmkhovanovhomology} to accurately estimate the Khovanov homology of $3$-braids in polynomial time. For other standard quantum homology algorithms \cite[\S 3]{schmidhuber2025quantumalgorithmkhovanovhomology} large Betti numbers seem to be an asset although it remains unclear how apply them to Khovanov homology.  Despite the exponential Betti numbers, we give an affirmative answer to Conjecture \ref{conjecture: Kh braid poly} in the case of braids on $3$ strands.

\begin{theorem} \label{thm:alg3braids} 
    Conjecture \ref{conjecture: Kh braid poly} is true for $3$-braids.
\end{theorem}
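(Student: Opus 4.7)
The plan is to sidestep the exponential Betti-number obstruction by producing the Khovanov homology directly from a canonical conjugacy representative of the braid, rather than by running any variant of the scanning algorithm on the original word. Two polynomial-size facts drive the argument: a $3$-braid is determined up to conjugacy by a polynomially small parameter list, and the bigraded output is polynomially large when ranks are written in binary (the bigradings lie in a region of area $O(n^2)$ and each rank, though possibly exponential in $n$, fits in $O(n)$ bits).

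The first step is to reduce an arbitrary $3$-braid $\beta$ of word length $n$ to a standard form in time polynomial in $n$. Using the Garside normal form one computes, in polynomial time, a representative of the conjugacy class of $\beta$, and then invokes Murasugi's classification of conjugacy classes in $B_3$: every $3$-braid is conjugate to one of
\[
\Delta^{2k}\sigma_1^{p_1}\sigma_2^{-q_1}\cdots \sigma_1^{p_s}\sigma_2^{-q_s}, \qquad p_i,q_i>0,
\]
or to one of a small number of exceptional families (such as $\Delta^{2k}\sigma_1^{p}$ and $\Delta^{2k}\sigma_1^{p}\sigma_2^{-1}$). The parameters $k,s,p_i,q_i$ are all $O(n)$, and since braid closure is invariant under conjugation the Khovanov homology of the closure is unaffected.

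The second step is to evaluate an explicit description of $\HKh$ of the closure of each canonical form. Building on the computations of Turner and Benheddi for torus links $T(3,n)$, one peels off peripheral blocks $\sigma_1^{p_i}\sigma_2^{-q_i}$ one at a time using the long exact sequence from the skein resolution of a crossing. This yields a recurrence whose state space records the homology of the current tangle relative to the central $\Delta^{2k}$ in the Bar-Natan category $\Cobl$. The key compactness is that for $3$-braids the relevant category of planar tangles has only $C_3=5$ indecomposable objects, so this "state" has a polynomial-sized description that can be updated in polynomial time per attached block. Adding up the contributions over the $O(n)$ blocks and the central $\Delta^{2k}$ produces the ranks of each bigraded summand, together with their torsion, in polynomial time.

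The main obstacle is step two: establishing and verifying a uniform recurrence that covers every case of Murasugi's list, including the exceptional conjugacy families, and correctly tracking the writhe and framing shifts introduced in passing from $\beta$ to its canonical conjugate. Once the recurrence is in place, the rest is bookkeeping; conceptually the whole argument reduces to the observation that a $3$-braid is a finite-state process acting on a $5$-dimensional "shape space", and homology of the closure is a polynomial-time linear-algebraic readout of that process, even when the readout itself encodes exponentially large numbers.
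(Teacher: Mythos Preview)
Your overall strategy---reduce to a Murasugi normal form, then read off $\HKh$ from the canonical representative---matches the paper's. The gap is in your second step. You propose to ``peel off peripheral blocks $\sigma_1^{p_i}\sigma_2^{-q_i}$ one at a time'' while tracking a state in $\Cobl$, arguing that since there are only $C_3=5$ crossingless matchings the state is polynomially bounded. But having five \emph{types} of indecomposable objects does not bound the complex: at each stage one has a direct sum of arbitrarily many $q$-shifted copies of these objects, together with the morphisms between them, and this is exactly the data that Bar-Natan's scanning algorithm maintains. Theorem~\ref{thm:slowscanning} shows that for $3$-braids such as $(\sigma_1\sigma_2\sigma_1^2\sigma_2^2)^n$ or the weaving links $W(3,n)$ this data is genuinely exponential in size, because the total Betti number already is. The long-exact-sequence recurrence has the same defect: resolving a crossing relates $\HKh(L)$ to two links whose Betti numbers are of the same exponential order, and one must also control the connecting homomorphism. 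The ``finite-state process on a $5$-dimensional shape space'' intuition is correct for the Jones polynomial (the Temperley--Lieb algebra is finite-dimensional) but does not survive categorification.

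The paper circumvents this by \emph{not} processing the alternating part incrementally. The structural input (from \cite{schuetz2025kh3braids}, recorded here as Theorems~\ref{thm:case_omega_4}--\ref{thm:case_omega_6}) is that $\HKh(L_{\Delta^{2k}w})$ is, away from a handful of explicitly listed bidegrees, the direct sum of $\HKh(T(3,3k))$ and a shift of $\HKh(L_w)$. The torus-link piece is tabulated in \cite{MR4430925}. Because $L_w$ is non-split alternating, its entire Khovanov homology---including the exponentially large ranks---is determined by its Jones polynomial and signature via Lemma~\ref{lem:red2unred}, and $J_{L_w}(q)$ is computed in polynomial time by Morton--Short \cite{MR1041170}. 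Thus the exponentially large Betti numbers are produced in one shot from a polynomial with $O(n)$-bit coefficients, rather than assembled through an intermediate object that itself has exponential size. Your proposal is missing precisely this reduction to the Jones polynomial of the alternating factor; without it, any block-by-block scheme will hit the exponential lower bound established in the paper.
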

The algorithm given in Section \ref{Section: 3braid polytime} to prove Theorem \ref{thm:alg3braids} is based on the earlier work of the second author \cite{schuetz2025kh3braids}, which showed that the integer Khovanov homology of a closed $3$-braid either is described by a formula in \cite{MR4430925}, or decomposes roughly as a direct sum of Khovanov homologies of a torus link $T(3,3k)$ and an alternating link. The homology groups $\HKh(T(3,3k))$ have also been computed in \cite{MR4430925} while the Khovanov homology of the alternating links can be obtained from their Jones polynomials and the signatures. These alternating links are themselves braid closures, so the polynomial time algorithm of Morton and Short can be used.

The Khovanov complex $\CKh(L)$ of a link diagram $L$ is a bigraded complex of finitely many finite rank $\Z$-modules. Denote the minimal square of $\Z^2$ gradings, which supports $\CKh(L)$ by $[i_{\min}, i_{\max}] \times [q_{\min}, q_{\max}]$.  As evidence for Conjecture \ref{conjecture: Kh braid poly} Przytycki and Silvero gave a polynomial time algorithm, which computed $\HKh^{\ast,q_{\min}}(L)$ (and by symmetry $\HKh^{\ast,q_{\max}}(L)$) for $4$-braids.
The construction of $\CKh(L)$ gives $i_{\min}=-n_-(L)$ and $i_{\max}=n_+(L)$ where $n_+(L)$ and $n_-(L)$ denote the number of positive and negative crossings of $L$. We show that the Khovanov homology at any number of homological gradings near $i_{\min}$ and $i_{\max}$ for braids on any number of strands can be obtained in polynomial time.

\begin{theorem}\label{thm:Extremal KH for braids}
For every $k,t \geq 0$ there is an algorithm $\mathcal B_{k,t}$ which takes in a braid $b$ on $t$ strands and outputs the integral Khovanov homology $\operatorname{Kh}^{i,\ast}(L_b)$ of the braid closure $L_b$ in homological degrees $i\leq -n_-(L_b)+k$  and $i\geq  n_+(L_b)-k$. The algorithms $\mathcal B_{k,t}$ run in polynomial time with respect to the length of $b$.
\end{theorem}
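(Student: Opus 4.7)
The plan is to adapt Bar-Natan's scanning algorithm so that at every stage we retain only the slice of the partial Khovanov complex that can contribute to the target window of homological degrees. I focus on the lower end $i\le -n_-(L_b)+k$; the upper range follows from running the mirrored version of the algorithm.

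Write $b=c_1 c_2 \cdots c_n$ and let $b_m=c_1\cdots c_m$. Bar-Natan's formal tangle complex for $b_m$ is supported in homological degrees $[-n_-(b_m),n_+(b_m)]$, and gluing in the local complex of the next crossing $c_{m+1}$, which is supported in $\{-1,0\}$ or $\{0,1\}$, corresponds to tensoring over $\Cob$. A short bookkeeping argument using the fact that homological degrees add under tensor product shows that a chain generator in homological degree $d$ of the partial complex can contribute, after all remaining crossings are glued, to a final generator in degree $\le -n_-(L_b)+k$ only if $d\le -n_-(b_m)+k$. At every stage I therefore discard everything above degree $-n_-(b_m)+k+1$, keeping one extra degree so that the outgoing differential at the top of the retained window is still available, and then simplify by delooping and Gauss elimination before gluing on the next crossing.

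The main thing to control is the size of the truncated reduced complex as $m$ grows. A resolution of $b_m$ whose homological degree lies in the retained window is determined by a choice of at most $k+1$ of the first $m$ crossings to place in non-minimal smoothing, so there are at most $O(m^{k+1})$ such resolutions. Un-delooped, each of them can carry exponentially many dotted-circle generators; however, because every $2t$-ended tangle that arises lies in the finite set of planar matchings on $2t$ points (whose cardinality is the Catalan number $C_t$, a constant depending only on $t$), I expect the delooped and Gauss-reduced chain groups in the retained window to have size bounded by a polynomial in $m$ whose degree depends on $t$ and $k$ only. Establishing this polynomial bound is the main obstacle. The most promising route is a simultaneous induction on $m$ maintaining the invariant that after simplification each chain group in the window is a direct sum of $O(m^{c(t,k)})$ shifted planar-matching summands, for an explicit constant $c(t,k)$, with the inductive step controlling the growth across a single tensor-and-reduce operation.

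After all $n$ crossings are processed, I close the braid by planar-composing with the trace tangle, run one more round of delooping and Gauss elimination, and obtain a chain complex of finitely generated free abelian groups whose rank is polynomial in $n$. Its integer homology can then be computed in polynomial time via Smith normal form, yielding $\HKh^{i,\ast}(L_b)$ for $i\le -n_-(L_b)+k$. Chaining the $n$ truncated gluing steps with the closing step produces the required algorithm $\mathcal B_{k,t}$, with polynomial runtime in the length of $b$.
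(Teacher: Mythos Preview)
Your overall strategy matches the paper's: truncate the scanning algorithm at homological degree $k+1$, deloop and cancel after each crossing, close up at the end, and compute Smith normal form. The paper reduces Theorem~\ref{thm:Extremal KH for braids} to a more general statement about link diagrams with bounded-girth scanning sequences (Proposition~\ref{prop: Extremal KH for bounded girth}) and then specializes to braids via a conjugation trick, but the engine is the same truncated scan you describe.

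However, there are two genuine gaps in your proposal.

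First, the polynomial rank bound is asserted but not proved. You write ``I expect the delooped and Gauss-reduced chain groups \ldots\ to have size bounded by a polynomial in $m$'' and then explicitly call this ``the main obstacle.'' The observation that loopless objects of $\Cobl^\Z(2t)$ form a finite set (of Catalan-number size) does not by itself bound the number of summands in a fixed homological degree; a priori a single planar matching can appear with high multiplicity. The paper obtains the bound $\rankCob(C_{[m]}^i)\le\binom{m}{i}$ by performing not arbitrary Gaussian elimination but a specific, limited Morse matching after each gluing (Section~\ref{subsec: blocks}): the planar arc diagrams are classified into types, and for each type one exhibits explicit block isomorphisms whose cancellation yields the recursion $\binom{m}{i}+\binom{m}{i-1}=\binom{m+1}{i}$. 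Your inductive invariant is the right shape, but the inductive step is exactly the content you have not supplied.

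Second, you do not address the size of the integer entries. Over $\Z$, iterated Gaussian elimination can in principle produce matrix entries whose bit-length grows superpolynomially, which would ruin the runtime bound (and the Smith normal form step). The paper handles this by restricting to the controlled Morse matching above and proving an explicit $\ell^1$-norm bound on the surviving morphisms (Lemma~\ref{lemma: rank and integer bounds}), using a submultiplicativity estimate for composition in $\Cobl^\Z(2t)$ (Lemma~\ref{lemma: horizontal composition}). Without such a bound, the claim that the final Smith normal form runs in polynomial time is unjustified.
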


The algorithms $\mathcal B_{k,t}$ are only a minor variation of Bar-Natan's scanning algorithm. While scanning, the intermediate complexes of tangles can be truncated without loosing information relevant to the extremal homological degrees. A similar trick was used in a fast algorithm for computing $s$-invariants \cite{MR4244204}. Even in the non-truncated degrees, the Khovanov hypercube complex may have exponential rank so we have to ensure that the Gaussian elimination of the scanning algorithm cancels out `most' of the summands. Since the intermediate complexes contain integer coefficients $c$, which need to be stored in the memory, we also need some polynomial $p$ for which $\log_2(c) \leq p(n(L_b))$ for all $c$ we process. As a by-product of our proof we obtain the following upper bound where $n(L)=n_+(L)+n_-(L)$:
\begin{proposition}\label{proposition: rank bounds}
For a connected link diagram $L$ and a field $\mathbb F$  
$$
 \dim_{\mathbb F} ( \operatorname{Kh}^{i,\ast} (L; \mathbb F)) \leq 2\binom{n(L)}{i+n_-(L)}.
$$
\end{proposition}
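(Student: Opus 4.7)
I would obtain Proposition~\ref{proposition: rank bounds} as a by-product of the Bar-Natan scanning analysis underlying Theorem~\ref{thm:Extremal KH for braids}. The strategy is to upper-bound the number of generators in the fully simplified Bar-Natan complex of $L$, since over a field $\mathbb{F}$ this count equals $\dim_{\mathbb{F}}\operatorname{Kh}^{\ast,\ast}(L;\mathbb{F})$. By ``simplified'' I mean that delooping (each closed circle is replaced by $\emptyset\{+1\}\oplus\emptyset\{-1\}$) and Gaussian elimination along identity-cobordism components of the differential have been applied iteratively to exhaustion; both moves are homotopy equivalences in $\Cob$.

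The central claim would be proved by induction on the number $k$ of crossings processed by the scanning: after the $k$-th step, the simplified Bar-Natan complex of the partial tangle carries at most $2\binom{k}{r}$ generators in the relative homological grading corresponding to $r$ one-smoothings among the first $k$ crossings. The base case $k=0$ is immediate: for a connected diagram the pre-crossing configuration can be arranged to contribute a single surviving generator, and $2\binom{0}{0}=2$. For the inductive step, adding one crossing produces a mapping cone whose two halves contribute to the grading-$r$ and grading-$(r-1)$ parts of the previous simplified complex, so the pre-simplification count is bounded by $2\binom{k}{r}+2\binom{k}{r-1}=2\binom{k+1}{r}$ via Pascal's identity. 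Any circle newly formed by the crossing is then delooped, and the identity-cobordism component of the cone differential pairs the added summand with one on the other side, which Gaussian elimination cancels, re-establishing the bound $2\binom{k+1}{r}$.

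The main obstacle is verifying this inductive step rigorously: one must show that every delooping triggered by a new crossing is indeed matched by a compatible Gaussian elimination. The connectedness of $L$ is essential here, since any circle formed upon smoothing the fresh crossing arises from joining two arcs that were already connected to the rest of the tangle, which forces the required identity cobordism into the cone's differential. For a disconnected diagram, this identity would be absent and one would instead accumulate an extra factor of $2$ per component, consistent with $\operatorname{Kh}(L_1\sqcup L_2;\mathbb{F})\cong\operatorname{Kh}(L_1;\mathbb{F})\otimes\operatorname{Kh}(L_2;\mathbb{F})$. Applying the inductive claim at $k=n(L)$ and identifying the relative grading parameter $r$ with $i+n_-(L)$ then yields the proposition.
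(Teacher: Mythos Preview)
Your approach is the same one the paper uses---scan, deloop, cancel, and track a binomial bound inductively---but your inductive step has a real gap. The claim ``every delooping triggered by a new crossing is matched by a compatible Gaussian elimination'' is not true in general, and bare connectedness of $L$ does not rescue it. The problematic case is when the fresh crossing attaches to the partial tangle along all four of its endpoints: one smoothing then closes off \emph{two} circles, and after delooping you only have enough identity components in the cone differential to cancel one of them. This is exactly where the extra factor of $2$ in the statement comes from, and if such a ``Type~IV'' gluing occurred more than once during the scan you would pick up additional powers of $2$ and the bound would fail.

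The paper controls this by first removing nugatory crossings (so the $4$-valent graph of the diagram becomes $2$-connected), then invoking a graph-theoretic lemma (Lemma~\ref{Lemma: 2-connected graph property}) to produce a \emph{nice scanning sequence} (Lemma~\ref{lemma: every link has nice scan}) in which the four-endpoint gluing occurs exactly once, at the very last step. With that in hand, the intermediate bound is actually $\binom{t}{i}$ with no factor of $2$ (Lemma~\ref{lemma: rank and integer bounds}), and the single final Type~IV step contributes the factor of $2$. Your sketch is missing both the reduction to a nugatory-free diagram and the existence argument for such a scanning order; without them the induction does not close.
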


The bounds of Proposition \ref{proposition: rank bounds} are asymptotically strict in the following sense:
\begin{proposition}\label{proposition: asymptotical strictness of the bounds}
    Let $L_t$ be the braid closure of $(\sigma_1 \sigma_3 \sigma_2^4)^t \sigma_1 \sigma_3$. For every $k\geq 0$ there exists $\varepsilon >0$ for which 
    $$
    \liminf_{t\to \infty}   \left(\frac{\operatorname{rank}(\operatorname{Kh}^{n(L_t)-k,\ast} (L_t))}{\binom{n(L_t)}{n(L_t) -k}} \right)  \geq \varepsilon .
    $$
\end{proposition}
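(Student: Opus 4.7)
Since $\beta_t = (\sigma_1 \sigma_3 \sigma_2^4)^t \sigma_1 \sigma_3$ is a positive $4$-braid, $n_-(L_t) = 0$ and $n(L_t) = n_+(L_t) = 6t + 2$. Writing $n = 6t + 2$, the task is to produce $\varepsilon > 0$ for which $\operatorname{rank}(\operatorname{Kh}^{n - k, \ast}(L_t)) \geq \varepsilon \binom{n}{k}$ for all sufficiently large $t$. The plan is to quantitatively refine the chain-level counting that underlies Proposition \ref{proposition: rank bounds}: exhibit many chain generators at homological degree $n-k$ in a well-chosen quantum grading $q_0$, and bound the differentials into and out of that bidegree.

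First, select the subfamily of resolutions that supplies the generators. At homological level $n-k$, the chain module decomposes as $\bigoplus_{|\alpha| = n-k} V^{\otimes c_\alpha}$, where $c_\alpha$ is the number of circles in the resolution $\alpha$. Restrict to the subfamily $\mathcal F_{t,k}$ of resolutions placing all $k$ zero-smoothings inside the twist blocks $\sigma_2^4$, leaving every $\sigma_1$ and $\sigma_3$ crossing in its one-smoothing. This subfamily has size $\binom{4t}{k}$, which is $\Theta(\binom{n}{k})$ by a direct ratio computation. Since each block has only $4$ crossings, the configurations inside the blocks give only finitely many local Temperley--Lieb patterns, and the circle count $c_\alpha$ for $\alpha \in \mathcal F_{t,k}$ differs from a common value $c^\ast$ only by an amount bounded in terms of $k$ (and thus constant as $t\to\infty$). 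Pick $q_0$ so that, for each $\alpha \in \mathcal F_{t,k}$, the corresponding internal degree in $V^{\otimes c_\alpha}$ lies near the centre of the range $[-c_\alpha, c_\alpha]$; then a positive fraction of such $\alpha$ contribute at least one generator at $q_0$, yielding $\operatorname{rank}\operatorname{CKh}^{n-k, q_0}(L_t) = \Omega(\binom{n}{k})$ with an explicit leading constant.

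Next, bound the adjacent chain ranks $\operatorname{rank}\operatorname{CKh}^{n-k\pm 1, q_0}(L_t)$ from above. Restricting the Proposition \ref{proposition: rank bounds} counting to the fixed quantum grading $q_0$ produces an $O(\binom{n}{k})$ upper bound; the specific block structure of $\beta_t$ should force the leading coefficient here to be strictly smaller than the one from the previous step. Feeding this into the elementary estimate
\[
\operatorname{rank} \operatorname{Kh}^{n-k, q_0} \;\geq\; \operatorname{rank} \operatorname{CKh}^{n-k, q_0} - \operatorname{rank} \operatorname{CKh}^{n-k-1, q_0} - \operatorname{rank} \operatorname{CKh}^{n-k+1, q_0}
\]
and dividing by $\binom{n}{k}$ yields the required $\varepsilon$ in the limit $t \to \infty$.

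The main obstacle is the quantitative comparison of leading constants: all three terms on the right-hand side are of polynomial order $n^k$, so a careful combinatorial analysis of the circles inside the $\sigma_2^4$ blocks versus the $\sigma_1 \sigma_3$ flanks is needed to ensure the first term strictly dominates. The particular shape of $\beta_t$ (four crossings per twist block, separated by the commuting pair $\sigma_1 \sigma_3$) is presumably engineered so that this comparison works out. Should the direct combinatorics become unwieldy, a fallback is to invoke Theorem \ref{thm:Extremal KH for braids}: the algorithm $\mathcal B_{k,4}$ outputs $\operatorname{Kh}^{n-k, \ast}(L_t)$ in time polynomial in $t$, so a closed-form expression for the rank as a function of $t$ can be extracted and its leading coefficient read off directly.
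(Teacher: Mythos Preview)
Your proposal has a genuine gap at the core step. The inequality
\[
\operatorname{rank}\operatorname{Kh}^{n-k,q_0}\;\geq\;\operatorname{rank}\operatorname{CKh}^{n-k,q_0}-\operatorname{rank}\operatorname{CKh}^{n-k-1,q_0}-\operatorname{rank}\operatorname{CKh}^{n-k+1,q_0}
\]
is only informative if the right-hand side is positive, and you give no argument that it is. All three terms are $\Theta(n^{k})$ at best, and in fact the middle chain group $\operatorname{CKh}^{n-k-1,\ast}$ sits over $\binom{n}{k+1}\sim\frac{n}{k+1}\binom{n}{k}$ resolutions, so without the restriction to a carefully chosen $q_0$ it swamps the first term. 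You acknowledge that the comparison of leading constants at $q_0$ is ``the main obstacle'', but that comparison \emph{is} the entire proof: nothing you have written forces the first term to strictly dominate the other two, and for generic diagrams it does not. The fallback of invoking the polynomial-time algorithm $\mathcal{B}_{k,4}$ is not a proof either; running an algorithm on an infinite family does not produce a closed form, and nothing in Theorem~\ref{thm:Extremal KH for braids} asserts that the output depends polynomially (or in any explicit way) on $t$.

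The paper's argument is entirely different and avoids rank-counting altogether. It applies the discrete Morse matching $M_{\mathrm{lex}}$ of \cite{kelo2025torus4} to $\CKh(L_t)$, then exhibits, for each word $w\in\{\alpha,\beta\}^{t}$, an explicit colored smoothing $u_w$ which is (i) unmatched by $M_{\mathrm{lex}}$, (ii) receives no incoming Morse differential, and (iii) has vanishing outgoing Morse differential because the contributing zig-zag paths cancel in pairs. Thus each $u_w$ splits off a free $\mathbb{Z}$ summand in homology, giving $\binom{t}{k}$ generators in degree $n(L_t)-k$ and hence $\varepsilon=6^{-k}$. The key idea you are missing is precisely this: one must produce \emph{explicit surviving cycles}, not merely compare chain ranks.
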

Proposition \ref{proposition: asymptotical strictness of the bounds} is a non-vanishing result so proving it requires us to generate a lot of distinct homology cycles. To do so, we take the Khovanov complex of $L_t$ and  employ a Morse matching on it from the earlier work of the first author \cite{kelo2025torus4}. This creates a chain homotopy equivalent Morse complex, where sufficiently many copies of $\mathbb Z$ split off. The same asymptotic lower bound is also proven for odd Khovanov homology.

\textbf{Is Conjecture \ref{conjecture: Kh braid poly} true?}
As stated in \cite{zbMATH07862431}, establishing Conjecture \ref{conjecture: Kh braid poly} would be a game changer in stating and testing conjectures about Khovanov homology. While Theorem \ref{thm:alg3braids} gives hope for this conjecture to be true, its proof very much uses the particular structure of $3$-braids and does not seem to generalize easily to more strands. Furthermore, it reveals that the Khovanov homology of any closed $3$-braid roughly decomposes into a part coming from an alternating braid word and a part coming from a torus link word.
Already in the case of $4$-braids the general structure seems much more complicated. For example, \cite{MR3894728} produced positive $4$-braids containing torsion of order $3$ in their Khovanov homology, as well as others that contain torsion of order $2^k$ for $k$ up to $23$. Also, computations show that $4$-strand torus links have a much more complicated Khovanov homology than their $3$-stranded counterparts.

Disproving Conjecture \ref{conjecture: Kh braid poly} would amount to constructing a reduction to a known NP-hard problem. The authors are not aware of any promising candidates for such reduction.

\textbf{Acknowledgments:} The authors would like to thank Qiuyu Ren for presenting a proof to Proposition \ref{Proposition: polybound on torus}, Ilmo Salmenperä for providing his insight to quantum algorithms, and Marithania Silvero for comments on extremal homology. TK was supported by the Väisälä Fund of the Finnish Academy of Science and Letters.

\section{Preliminaries}
In this section we fix the notation for the Khovanov complex and a few variations, as well as our conventions for braids and algebraic discrete Morse theory.

By a {\em Frobenius algebra} $A$ over a commutative ring $R$ we mean a commutative ring $A$ that contains $R$ as a subring that comes with an $R$-module map $\varepsilon\colon A \to R$ and an $A$-bimodule map $\Delta\colon A \to A\otimes_RA$ that is co-associative and co-commutative, and such that $(\varepsilon\otimes \id)\circ A = \id$.

In the case that $A$ is free of rank $2$ over $R$, Khovanov  \cite{MR2232858} showed that for a link diagram $L$ one can define a cochain complex $C(L)$ over $R$ whose homology is a link invariant. The Frobenius algebra leading to Khovanov's original link homology \cite{MR1740682} is given by $A= \Z[X]/(X^2)$, with $\varepsilon(1) = 0$, $\varepsilon(X) = 1$, and $\Delta(1) = 1\otimes X+X\otimes 1$. Here $A$ can be given a $q$-grading by $|1|_q = 1$ and $|X|_q=-1$, which turns the {\em Khovanov complex} $\CKh(L)$ into a bigraded cochain complex. We write $\CKh^{i,j}(L)$ to indicate the homological grading and the $q$-grading, and $\HKh^{i,j}(L)$ for the corresponding homology groups. We may also omit the gradings, or write $\HKh^{i,j}(L;R)$ if we want to indicate coefficients in a ring $R$.

For tangles $T$ there is also a Khovanov complex, we follow mainly Bar-Natan's dotted version \cite{MR2174270}. Let $B\subset \R^2$ be a compact surface and $\dot{B}\subset \partial B$ an oriented $0$-dimensional compact manifold bordant to the empty set. We denote by $\Cob(B,\dot{B})$ the category whose objects are compact smooth $1$-dimensional submanifolds $S\subset B$ intersecting $\partial B$ transversely in $\partial S = \dot{B}$. Morphisms between objects $S_0$ and $S_1$ are isotopy classes of dotted cobordisms between $S_0\times\{0\}$ and $S_1\times\{1\}$ embedded in $B\times [0,1]$ and which are a product cobordism near $\dot{B}\times [0,1]$. Here dotted means that there are finitely many specified points in the interior of the cobordism, which can move freely there.

If $R$ is a commutative ring, we denote by $\Cobl^R(B,\dot{B})$ the additive category whose objects are finitely generated free and based $R$-modules where basis elements are objects $q^jS$ with $S$ an object of $\Cob(B,\dot{B})$ and $j\in \Z$. Here $q^j$ stands for a shift in the $q$-grading. Morphisms are given by matrices $(M_{nm})$ with each matrix entry $M_{nm}$ an element of the free $R$-module generated by the morphism set between $S_n$ and $S_m$ in $\Cob(B,\dot{B})$, modulo the relations in Figure \ref{Figure: relations of Cob}. The category $\Cobl^R(B,\dot{B})$ admits a delooping isomorphism, see Figure \ref{Delooping isomorphism}, which is at the heart of many of our computations.

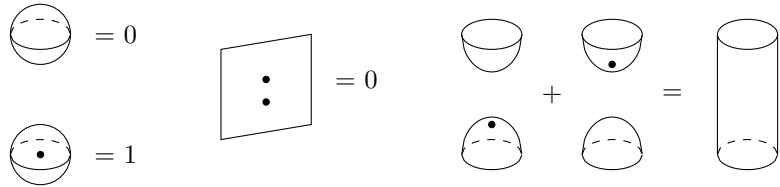
\begin{figure}[ht]
    \centering    
\begin{tikzpicture}[scale=0.4]
  
  \def\xradius{1};
  \def\yradius{0.5};
  \def\ballsxcoor{-15}
  \def\rectanglexcoor{-9}

      \draw (\ballsxcoor,4) ellipse (\xradius cm and \xradius cm);
    \draw[dashed] plot[domain=-\xradius:\xradius] ({\x +\ballsxcoor},{4+1*\yradius * sqrt(1 - (\x / \xradius)^2)});
  \draw plot[domain=-\xradius:\xradius] ({\x +\ballsxcoor},{4-1*\yradius * sqrt(1 - (\x / \xradius)^2)});
\draw (\ballsxcoor+2.5,4) node { = 0};

      \draw (\ballsxcoor,0) ellipse (\xradius cm and \xradius cm);
    \draw[dashed] plot[domain=-\xradius:\xradius] ({\x +\ballsxcoor},{1*\yradius * sqrt(1 - (\x / \xradius)^2)});
  \draw plot[domain=-\xradius:\xradius] ({\x +\ballsxcoor},{-1*\yradius * sqrt(1 - (\x / \xradius)^2)});
\draw (\ballsxcoor+2.5,0) node { = 1};

  \coordinate (A) at (\rectanglexcoor,0.5);
  \coordinate (B) at (\rectanglexcoor+3,1);
  \coordinate (C) at (\rectanglexcoor+3,4);
  \coordinate (D) at (\rectanglexcoor,3.5);

  \draw (A) -- (B) -- (C) -- (D) -- cycle;

    \filldraw (\rectanglexcoor+1.5,1.75) circle (3pt);
    \filldraw (\rectanglexcoor+1.5,2.5) circle (3pt);
    \draw (\rectanglexcoor+4.5,2.5) node {= 0};

    \filldraw (\ballsxcoor,0) circle (3pt);

  \draw (0,4) ellipse (\xradius cm and \yradius cm);
  \draw (4,4) ellipse (\xradius cm and \yradius cm);

\draw[dashed] plot[domain=-\xradius:\xradius] ({\x},{1*\yradius * sqrt(1 - (\x / \xradius)^2)});
\draw plot[domain=-\xradius:\xradius] ({\x},{-1*\yradius * sqrt(1 - (\x / \xradius)^2)});

  \draw plot[domain=-\xradius:\xradius] ({\x},{4 - 2.5*\yradius * sqrt(1 - (\x / \xradius)^2)});
  \draw plot[domain=-\xradius:\xradius] ({\x + 4},{4 - 2.5*\yradius * sqrt(1 - (\x / \xradius)^2)});

  \draw plot[domain=-\xradius:\xradius] ({\x},{2.5*\yradius * sqrt(1 - (\x / \xradius)^2)});
  \draw plot[domain=-\xradius:\xradius] ({\x + 4},{2.5*\yradius * sqrt(1 - (\x / \xradius)^2)});
  \draw[dashed] plot[domain=-\xradius:\xradius] ({\x + 4},{1*\yradius * sqrt(1 - (\x / \xradius)^2)});
  \draw plot[domain=-\xradius:\xradius] ({\x + 4},{-1*\yradius * sqrt(1 - (\x / \xradius)^2)});
  
  \draw (2,2) node { +};

    \draw (6,2) node { =};

    \draw (8.5,4) ellipse (\xradius cm and \yradius cm);
  \draw[dashed] plot[domain=-\xradius:\xradius] ({\x + 8.5},{1*\yradius * sqrt(1 - (\x / \xradius)^2)});
  \draw plot[domain=-\xradius:\xradius] ({\x + 8.5},{-1*\yradius * sqrt(1 - (\x / \xradius)^2)});

    \draw (7.5,0) -- (7.5,4);
    \draw (9.5,0) -- (9.5,4);
    
    \filldraw (4,3) circle (3pt);
    \filldraw (0,1) circle (3pt);
  
\end{tikzpicture}

    \caption{The dotted relations of $\Cobl^{\mathbb Z}(B,\dot{B})$. We refer to the two leftmost as \term{sphere relations}, the middle one as \term{double dot relation} and the right one as \term{neck cutting relation}.}
    \label{Figure: relations of Cob}
\end{figure}

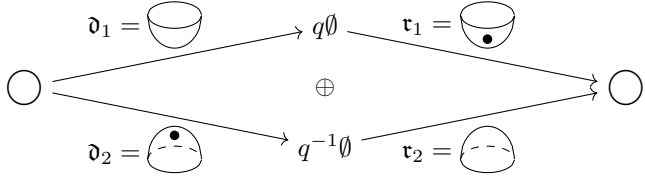
\begin{figure}[ht]
  \centering
    \begin{tikzpicture}[scale=0.8]

  \node (A) at (-5, 2) {\Large$\bigcirc$};
\node (B) at (0, 3) { $q\emptyset$};
  \node (C) at (0, 1) { $q^{-1}\emptyset $};
  \node (D) at (5, 2) {\Large$\bigcirc$};

\node (E) at (0, 2) {$\oplus$};

  \draw[->] (A) -- (B);
  \draw[->] (A) -- (C);
  \draw[->] (B) -- (D);
  \draw[->] (C) -- (D);

  \def\xradius{0.45};
  \def\yradius{0.22};

\def\a{2.7}
\def\b{3.2}

\def\c{-2.5}
\def\d{3.2}

\def\e{2.7}
\def\f{0.8}

\def\g{-2.5}
\def\h{0.8}

\node at (\a-1,\b-0.2) {$\mathfrak r_1=$ };
\node at (\e-1,\f+0.1) {$\mathfrak r_2=$ };

\node at (\c-1,\d-0.2) {$\mathfrak d_1=$ };
\node at (\g-1,\h+0.1) {$\mathfrak d_2=$ };

\draw plot[domain=-\xradius:\xradius] ({\a+\x},{\b+1*\yradius * sqrt(1 - (\x / \xradius)^2)});
\draw plot[domain=-\xradius:\xradius] ({\a+\x},{\b-1*\yradius * sqrt(1 - (\x / \xradius)^2)});
\draw plot[domain=-\xradius:\xradius] ({\a+\x},{\b-2.5*\yradius * sqrt(1 - (\x / \xradius)^2)});
\filldraw (\a,\b-0.4) circle (2pt);

\draw plot[domain=-\xradius:\xradius] ({\c+\x},{\d+1*\yradius * sqrt(1 - (\x / \xradius)^2)});
\draw plot[domain=-\xradius:\xradius] ({\c+\x},{\d-1*\yradius * sqrt(1 - (\x / \xradius)^2)});
\draw plot[domain=-\xradius:\xradius] ({\c+\x},{\d-2.5*\yradius * sqrt(1 - (\x / \xradius)^2)});

\draw[dashed] plot[domain=-\xradius:\xradius] ({\e+\x},{\f+1*\yradius * sqrt(1 - (\x / \xradius)^2)});
\draw plot[domain=-\xradius:\xradius] ({\e+\x},{\f-1*\yradius * sqrt(1 - (\x / \xradius)^2)});
\draw plot[domain=-\xradius:\xradius] ({\e+\x},{\f+2.5*\yradius * sqrt(1 - (\x / \xradius)^2)});
\draw[dashed] plot[domain=-\xradius:\xradius] ({\g+\x},{\h+1*\yradius * sqrt(1 - (\x / \xradius)^2)});
\draw plot[domain=-\xradius:\xradius] ({\g+\x},{\h-1*\yradius * sqrt(1 - (\x / \xradius)^2)});
\draw plot[domain=-\xradius:\xradius] ({\g+\x},{\h+2.5*\yradius * sqrt(1 - (\x / \xradius)^2)});
\filldraw (\g,\h+0.4) circle (2pt);

\end{tikzpicture}

    \caption{The local delooping isomorphism 
$\begin{bmatrix} \mathfrak d_1 \\ \mathfrak d_2 \end{bmatrix}$ its inverse $\begin{bmatrix} \mathfrak r_1,\mathfrak r_2 \end{bmatrix}$. }
    \label{Delooping isomorphism}
\end{figure}

Given an oriented crossing $c\in\{ \orCrosPos, \orCrosNeg \}$ we denote by $\llbracket c \rrbracket$ the tangle complex of $c$, or Bar-Natan complex of $c$, defined as 
\begin{align*}
\llbracket \orCrosPos \rrbracket&= \quad \cdots \longrightarrow 0 \xrightarrow{\qquad \ } u^0q^1 \,  \smoothingZero \ \xrightarrow{\text{saddle}} \  u^1q^2 \, \smoothingOne \ \xrightarrow{\quad \ } 0 \longrightarrow \cdots \\
\llbracket \orCrosNeg \rrbracket&= \quad  \cdots \longrightarrow 0 \longrightarrow u^{-1}q^{-2}\,  \smoothingOne \ \xrightarrow{\text{saddle}}  \  u^0q^{-2} \, \smoothingZero \ \longrightarrow 0 \longrightarrow \cdots
\end{align*}
over the additive category $\Cobl^R(B, \dot{B})$, where $\dot{B}$ are four points with orientation induced by $c$. Here $u^i$ denotes the homological degree and $q^j$ the $q$-degree.

The categories $\Cobl^R(B, \dot{B})$ compose well, see \cite[\S 5]{MR2174270}, allowing us to form tensor products between complexes $\llbracket c_1 \rrbracket$ and $\llbracket c_2 \rrbracket$, which in turn leads to tangle complexes $\llbracket T \rrbracket$ over $\Cobl^R(B, \dot{B})$ for any tangle diagram $T$ embedded in $B$ with endpoints given by $\dot{B}$.

Given a word $w$ in an alphabet $\{\sigma_i, \sigma^{-1}_i\mid 1\leq i\leq n-1\}$ for some $n\geq 2$, we get a tangle $T_w$ by using a standard tangle for each $\sigma_i^{\pm}$ as in Figure \ref{fig:braid_conv} and stacking them on top of each other. This gives rise to a braid on $n$ strands which can be closed to a link.

\begin{figure}[ht]
\[
\begin{tikzpicture}
\node at (1,1.75) {$\sigma_i$};
\node at (-0.75,1) {$\lineup{0.5}$};
\node at (0.25,1) {$\lineup{0.5}$};
\node at (1,1) {$\crosPs{0.5}$};
\node[scale = 0.6] at (-0.75,0.5) {$1$};
\node at (-0.25,1) {$\cdots$};
\node[scale = 0.6] at (0.25,0.5) {$i-1$};
\node[scale = 0.6] at (0.7,0.5) {$i$};
\node[scale = 0.6] at (1.3,0.5) {$i+1$};
\node at (1.8,1) {$\cdots$};
\node at (2.3,1) {$\lineup{0.5}$};
\node[scale = 0.6] at (2.3,0.5) {$n$};
\node at (5,1.75) {$\sigma_i^{-1}$};
\node at (3.25,1) {$\lineup{0.5}$};
\node at (4.25,1) {$\lineup{0.5}$};
\node at (5,1) {$\crosNg{0.5}$};
\node[scale = 0.6] at (3.25,0.5) {$1$};
\node at (3.75,1) {$\cdots$};
\node[scale = 0.6] at (4.25,0.5) {$i-1$};
\node[scale = 0.6] at (4.7,0.5) {$i$};
\node[scale = 0.6] at (5.3,0.5) {$i+1$};
\node at (5.8,1) {$\cdots$};
\node at (6.3,1) {$\lineup{0.5}$};
\node[scale = 0.6] at (6.3,0.5) {$n$};
\node at (8,0.5) {$\crosPs{0.5}$};
\node at (7.75,1) {$\lineup{0.5}$};
\node at (7.75,1.5) {$\lineup{0.5}$};
\node at (8.25,1) {$\lineup{0.5}$};
\node at (8.5,1.5) {$\crosNg{0.5}$};
\node at (9,1) {$\crosPs{0.5}$};
\node at (8.75,0.5) {$\lineup{0.5}$};
\node at (9.25,0.5) {$\lineup{0.5}$};
\node at (9.25,1.5) {$\lineup{0.5}$};
\end{tikzpicture}
\]
\caption{\label{fig:braid_conv} The tangles for $\sigma_i$, $\sigma^{-1}_i$, and $\sigma_1\sigma_3\sigma_2^{-1}$.}
\end{figure}
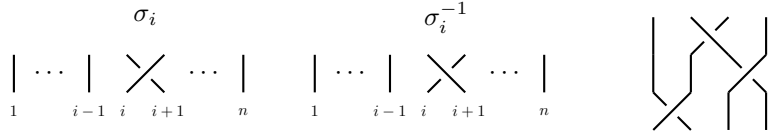

We note that our conventions for braid diagrams lead to the mirror of the braid diagrams in \cite{MR356023}. The reason we use our convention is that we want positive braid words to give rise to links whose Khovanov homology is concentrated in non-negative homological degrees.

Also, the tangle diagram $T_w$ fits in a rectangle with $n$ points on the bottom and $n$ points on the top. We simply write $\Cobl^R(B^n_n)$ for the corresponding category over which the Khovanov complex is defined. In later sections, the compact surface $B\subset \R^2$ is simply a disc with $\dot{B}$ consisting of $2n$ points. We will then write $\Cobl^R(2n)$ for $\Cobl^R(B,\dot{B})$.

\subsection{Gaussian elimination and algebraic discrete Morse theory}

The following standard lemma is a key tool for simplifying combinatorial chain complexes, both from a theoretical and an algorithmic viewpoint. 
\begin{lemma}[Gaussian elimination]\label{lemma: gaussian elimination}
    Let $(C,d)$ be a chain complex over an additive category $\mathfrak C$. Assume that for $n$ there are decomposition of the chain spaces $C^n=C^n_1 \oplus C^n_2$ and $C^{n+1}=C^{n+1}_1 \oplus C^{n+1}_2$ and differentials $d^{n-1}$, $d^n$, and $d^{n+1}$ so that
    $$
    C= \qquad \dots \rightarrow C^{n-1} \xrightarrow{\ \begin{bmatrix}
        \alpha \\ \beta
    \end{bmatrix} \ }  C^n_1 \oplus C^n_2 \xrightarrow{\ \begin{bmatrix}
        \varphi & \delta   \\ \gamma & \varepsilon
    \end{bmatrix} \ } C^{n+1}_1 \oplus C^{n+1}_2 \xrightarrow{\ \begin{bmatrix}
        \mu & \nu   
    \end{bmatrix} \ } C^{n+2} \rightarrow \cdots.
    $$
    If $\varphi\colon C^n_1 \to C_1^{n+1}$ is an isomorphism, then $C$ is chain homotopy equivalent to the complex $C'$ which in homological degrees $[n-1,n+2]$ is defined by 
    $$
    C'= \qquad  \dots \rightarrow C^{n-1} \xrightarrow{\ \beta \ }   C^n_2 \xrightarrow{ \varepsilon- \gamma \varphi^{-1} \delta }  C^{n+1}_2 \xrightarrow{\ \nu \ } C^{n+2} \rightarrow \cdots
    $$
    and outside these degrees $C'$ agrees with $C$.
\end{lemma}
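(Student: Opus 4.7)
The plan is to exhibit explicit chain maps $f\colon C\to C'$ and $g\colon C'\to C$ together with a chain homotopy $h$ witnessing $g\circ f\simeq \id_C$, while $f\circ g$ will be seen to equal $\id_{C'}$ strictly. Outside the homological degrees $n$ and $n+1$ I take both $f$ and $g$ to be the identity. In the two affected degrees, the maps are
\[
f^n = \begin{bmatrix} 0 & \id \end{bmatrix}, \quad f^{n+1} = \begin{bmatrix} -\gamma\varphi^{-1} & \id \end{bmatrix}, \quad g^n = \begin{bmatrix} -\varphi^{-1}\delta \\ \id \end{bmatrix}, \quad g^{n+1} = \begin{bmatrix} 0 \\ \id \end{bmatrix},
\]
and $h$ vanishes everywhere except for $h^{n+1}\colon C^{n+1}\to C^n$ given by $\begin{bmatrix}\varphi^{-1} & 0\\ 0 & 0\end{bmatrix}$. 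Morally, the ``correction terms'' $-\varphi^{-1}\delta$ and $-\gamma\varphi^{-1}$ appearing in $g^n$ and $f^{n+1}$ are there to cancel out the $C^n_1$- and $C^{n+1}_1$-components of the adjacent differentials.

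The first step is to verify that $f$ and $g$ are chain maps. Commutativity is automatic at the central square, since both $f^{n+1}\circ d_C^n$ and $d_{C'}^n\circ f^n$ reduce to $\begin{bmatrix}0 & \varepsilon-\gamma\varphi^{-1}\delta\end{bmatrix}$, and similarly for $g$. At the two boundary squares one must invoke $d^2=0$: the $C^n_1$-component of $d^n\circ d^{n-1}=0$ yields $\varphi\alpha+\delta\beta=0$, and the $C^{n+1}_1$-component of $d^{n+1}\circ d^n=0$ yields $\mu\varphi+\nu\gamma=0$. Invertibility of $\varphi$ rewrites these as $\alpha=-\varphi^{-1}\delta\beta$ and $\mu=-\nu\gamma\varphi^{-1}$, which are precisely the identities needed to match $d^{n-1}_C$ with $g^n\circ d^{n-1}_{C'}$, and $f^{n+2}\circ d^{n+1}_C$ with $d^{n+1}_{C'}\circ f^{n+1}$.

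The identity $f\circ g=\id_{C'}$ then follows from block multiplication in degrees $n$ and $n+1$. For the homotopy identity $\id_C-g\circ f=dh+hd$, one computes $\id-g^n\circ f^n=\begin{bmatrix}\id & \varphi^{-1}\delta\\ 0 & 0\end{bmatrix}=h^{n+1}\circ d_C^n$ in degree $n$, and $\id-g^{n+1}\circ f^{n+1}=\begin{bmatrix}\id & 0\\ \gamma\varphi^{-1} & 0\end{bmatrix}=d_C^n\circ h^{n+1}$ in degree $n+1$; outside these degrees the identity is trivial. There is no serious obstacle in this argument: it is formal matrix manipulation inside the additive category $\mathfrak C$. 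The only conceptual point worth emphasizing is that the $d^2=0$ relations force the off-diagonal pieces $\alpha$ and $\mu$ to factor through $\varphi$, which is exactly what allows the acyclic subcomplex $C^n_1\xrightarrow{\varphi}C^{n+1}_1$ to split off and leaves the Schur complement $\varepsilon-\gamma\varphi^{-1}\delta$ as the new differential.
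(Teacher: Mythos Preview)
Your proof is correct and is exactly the standard argument: explicit chain maps together with a one-term homotopy, using the two relations $\varphi\alpha+\delta\beta=0$ and $\mu\varphi+\nu\gamma=0$ extracted from $d^2=0$. The paper does not actually prove this lemma; it is stated as a ``standard lemma'' and used as a black box throughout, so there is nothing to compare against. One small point you leave implicit is that $C'$ is genuinely a complex (i.e.\ $(\varepsilon-\gamma\varphi^{-1}\delta)\beta=0$ and $\nu(\varepsilon-\gamma\varphi^{-1}\delta)=0$); this follows from the same $d^2=0$ relations, or alternatively from $f\circ g=\id_{C'}$ together with $g$ being a chain map, but it is worth a sentence.
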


Applying  Gaussian elimination once on a large chain complex $C$ often does not cut it and one needs to iterate Lemma \ref{lemma: gaussian elimination} in order to obtain a reasonable description of $C$. Since the Gaussian elimination introduces a new morphism between the remaining summands, it is not immediately obvious whether an isomorphism of $C$ continues to be a cancellable isomorphism after some number of other Gaussian eliminations have been performed. 
Algebraic discrete Morse theory \cite{MR2171225} is a framework which provides a sufficient condition for iteratively applying Lemma \ref{lemma: gaussian elimination} and a description of the resulting complex.

A \term{based chain complex} is a chain complex $(C,d)$ over an additive category $\mathfrak C$ and a fixed direct sum decomposition $C^i= \bigoplus_k C_k^i$ on every chain space $C^i$. The decomposition of the chain spaces also gives out a decomposition of the differentials: we call the maps
$$
d_{C^{i+1}_k,C^{i}_j}\colon C^i_j \to C^{i+1}_k, \quad d_{C^{i+1}_k,C^{i}_j}= \pi d^i \iota
$$
\term{matrix elements of} $C$, where $\iota$ and $\pi$ are the canonical inclusions and projections associated to $C^i_j$ and $C^{i+1}_k$. The based complex $C$ induces a directed graph $G(C)=(V,E)$, whose vertices $V$ are the direct summands $C_k^i$ and whose directed edges $E$ are the non-zero matrix elements of $C$.
Reversing a subset of edges $M\subset E$ gives out another graph $G(C,M)=(V',E')$ with $V'=V$ and 
$$
E'=(E\setminus M) \cup \{ b\to a \mid (a\to b)\in M \}. 
$$
We call $M$ a \term{Morse matching on} $C$, if
\begin{enumerate}
    \item $M$ is finite.\label{Condition: finiteness condition of Morse}
    \item $M$ is a matching, i.e., its edges are pairwise non-adjacent. \label{Condition: matching condition of Morse}
    \item For every edge $f\in M$, the corresponding matrix element $f$ is an isomorphism in $\mathfrak C$.\label{Condition: isomorphism condition of Morse}
    \item $G(C, M )$ has no directed cycles. \label{Condition: acyclicity condition of Morse}
\end{enumerate}

The graph $G(C,M)$ can be viewed as a category, whose objects and morphisms are the vertices and directed paths of $G(C,M)$. This allows us to define a functor $R\colon G(C,M) \to \mathfrak C$ on vertices by $R(C^i_k)=C^i_k$. On single edges, we set $R(f)=f$ if $f\notin M$ and $R(g)=-g^{-1}$, if $g\in M$ and on longer paths we extend $R$ functorially. If $M$ is a Morse matching on $C$, we can define the based \term{Morse complex} $(M(C), \partial )$, whose direct summands are the summands of $C$ which are not matched by $M$. The differentials $\partial^i$ are described by their matrix elements with
$$
\partial^i_{B,A}= \sum_{p\in \{ \text{paths: } A\to  B\} } R(p)
$$
where $A$ and $B$ are unmatched cell of $C^i$ and $C^{i+1}$ respectively. 

\begin{theorem}[Sköldberg \cite{MR2171225}]\label{Theorem: discrete Morse theory}
If $M$ is a Morse matching on $C$, then $C \simeq M(C)$.    
\end{theorem}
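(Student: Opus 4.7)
The plan is to prove the theorem by induction on $|M|$, cancelling one matched edge at a time via Gaussian elimination (Lemma \ref{lemma: gaussian elimination}). The base case $|M|=0$ is immediate since $M(C)=C$. For the inductive step I would pick any matched edge $f\colon A\to B\in M$; by Condition \ref{Condition: isomorphism condition of Morse} the morphism $f$ is an isomorphism, so Lemma \ref{lemma: gaussian elimination} applies and produces a chain homotopy equivalent based complex $C'$, obtained by deleting the summands $A$ and $B$ and adjusting the remaining differentials by composite correction terms $-\gamma f^{-1}\delta$ whenever there are edges $X\xrightarrow{\delta}B$ and $A\xrightarrow{\gamma}Y$ in $G(C)$ with $X\neq A$ and $Y\neq B$.

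The main technical step is to verify that $M':=M\setminus\{f\}$ remains a Morse matching on $C'$, so that the induction hypothesis then gives $C'\simeq M'(C')$. Finiteness and the matching property of $M'$ are clear. For Condition \ref{Condition: isomorphism condition of Morse}, I would use that $G(C,M)$ is a finite DAG (Conditions \ref{Condition: finiteness condition of Morse} and \ref{Condition: acyclicity condition of Morse}) and fix a topological order on its vertices. In this order the reversed matched edge $B\to A$ forces $B<A$, and for each new composite edge $X\to Y$ its constituent edges $X\to B$ and $A\to Y$ are not in $M$ (because $A$ and $B$ are already matched with each other), hence they appear unreversed in $G(C,M)$ and yield $X<B<A<Y$. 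If this composite were to modify the matrix element of a matched edge $g\in M'$ with source $X$ and target $Y$, the reversal of $g$ in $G(C,M)$ would impose $Y<X$, contradicting $X<Y$. Thus no matched edge is perturbed by the Gaussian elimination of $f$, and the isomorphism condition persists. For Condition \ref{Condition: acyclicity condition of Morse}, any directed cycle in $G(C',M')$ lifts to a closed walk in $G(C,M)$ by replacing each new composite edge $X\to Y$ with the two-edge path $X\to B\to A\to Y$ (using the reversed edge $B\to A$ coming from $f$), contradicting the acyclicity of $G(C,M)$. I expect this topological-order bookkeeping to be the most delicate step.

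Applying the induction hypothesis to $(C',M')$ and composing with Lemma \ref{lemma: gaussian elimination} then yields $C\simeq M'(C')$, and it remains to identify $M'(C')$ with $M(C)$ as based complexes. The underlying chain spaces agree on the nose, both being the summands of $C$ unmatched by $M$. For the matrix elements of the differential $\partial^i_{B',A'}=\sum_p R(p)$, I would partition the paths $p$ in $G(C,M)$ between unmatched summands according to whether or not they traverse the reversed edge $B\to A$ of $f$. Paths avoiding $B\to A$ correspond bijectively to paths in $G(C',M')$ and contribute the differential of $M'(C')$ by the induction hypothesis; paths traversing $B\to A$ at least once factor through compositions whose $R$-weight incorporates $R(f^{-1})=-f^{-1}$, matching exactly the correction $-\gamma f^{-1}\delta$ introduced by Gaussian elimination at $f$. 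A short secondary induction on the number of reversed matched edges traversed by $p$ then reconciles the two descriptions of the differential, completing the identification.
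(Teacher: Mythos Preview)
Your proposal is correct and follows essentially the same approach as the paper, which only sketches the proof: inductively apply Gaussian elimination (Lemma~\ref{lemma: gaussian elimination}) to the edges of $M$, using acyclicity (Condition~\ref{Condition: acyclicity condition of Morse}) to guarantee that the remaining matched morphisms stay isomorphisms after each cancellation. One small caveat: Condition~\ref{Condition: finiteness condition of Morse} only asserts that $M$ is finite, not that $G(C,M)$ is, so your appeal to a global topological order is not quite justified---but you do not need it, since the cycle $Y\to X\to B\to A\to Y$ in $G(C,M)$ (built from the reversed edges of $g$ and $f$ together with the unreversed edges $X\to B$ and $A\to Y$) directly contradicts acyclicity whenever a matched edge $g\colon X\to Y\in M'$ could be perturbed by the correction term.
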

\begin{proof}[Sketch of proof]
    Inductively use Lemma \ref{lemma: gaussian elimination} on the isomorphisms of $M$. Condition \ref{Condition: acyclicity condition of Morse} ensures that the $k+1$:th isomorphism remains an isomorphisms after $k$ cancellations.    
\end{proof}

From the four conditions of Morse matching (\ref{Condition: matching condition of Morse}) is often easy to prove and it greatly simplifies the combinatorics of $G(C,M)$. If (\ref{Condition: matching condition of Morse}) holds, then no path can descend two homological degrees and to prove (\ref{Condition: acyclicity condition of Morse}) it suffices show that there exist no loops which zig-zag between two adjacent homological degrees. Similarly, all paths that contribute to the differentials $\partial$ of the Morse complex $M(C)$ have to zig-zag between adjacent homological degrees.

\section{Khovanov homology of 3-braids in polynomial time}\label{Section: 3braid polytime}

The braid group on $3$ strands is given by $\mathbf{B}_3 = \langle \sigma_1, \sigma_2\mid \sigma_1\sigma_2\sigma_1 = \sigma_2\sigma_1\sigma_2\rangle$. We denote the element $\Delta = \sigma_1\sigma_2\sigma_1$, so that $\Delta^2$ generates the center of $\mathbf{B}_3$.

The first step in our algorithm is to turn a word $w\in \mathbf{B}_3$ into its Murasugi normal form. Recall that Murasugi normal forms for conjugacy classes are given by

\begin{align*}
\Omega_0 &= \{ \Delta^{2k}\mid k\in \Z\},\\
\Omega_1 &= \{ \Delta^{2k}\sigma_1\sigma_2\mid k\in \Z\},\\
\Omega_2 &= \{ \Delta^{2k}(\sigma_1\sigma_2)^2\mid k\in \Z\},\\
\Omega_3 &= \{ \Delta^{2k+1}\mid k\in \Z\},\\
\Omega_4 &= \{ \Delta^{2k} \sigma_1^{-p}\mid k,p\in \Z, p>0\},\\
\Omega_5 &= \{ \Delta^{2k} \sigma_2^q \mid k,q\in \Z, q>0\},\\
\Omega_6 &= \{\Delta^{2k} \sigma_1^{-p_1}\sigma_2^{q_1}\cdots \sigma_1^{-p_r}\sigma_2^{q_r} \mid k,r,p_i,q_i\in \Z, r,p_i,q_i>0\}.
\end{align*}

It is shown in \cite[Prop.2.1]{MR356023} that any word $w\in \mathbf{B}_3$ is conjugate to an element of $\Omega_0\cup \cdots \cup \Omega_6$. We first want to ensure that we can obtain this normal form in polynomial time with respect to the length of $w$. We denote the length of a word $w$ by $|w|$.

\begin{proposition}\label{prop:alg_normalform}
There is an algorithm $\mathcal{A}_{Mnf}$ which turns a word $w\in \mathbf{B}_3$ into its Murasugi normal form $N(w)$, and which runs in linear time with respect to the length of $w$. Furthermore, the length of $N(w)$ as a word in $\Delta, \sigma_1, \sigma_2$ is linearly bounded in $|w|$.
\end{proposition}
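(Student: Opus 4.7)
My plan is to give an explicit algorithm implementing Murasugi's existence argument \cite[Prop.~2.1]{MR356023} via a single stack-based scan. Since the output is only required to be a conjugate of $w$, cyclic shifts are free. I maintain the current state as a doubly-linked list of tokens over $\{\sigma_1^{\pm 1}, \sigma_2^{\pm 1}\}$, together with an integer counter $a$ recording the accumulated power of $\Delta$ at the base of the list, and a parity bit $s\in\{0,1\}$ specifying whether the tokens in the list should be read literally ($s=0$) or with the swap $\sigma_1\leftrightarrow \sigma_2$ applied ($s=1$).

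The main loop processes one input letter at a time. For each $\ell\in\{\sigma_1^{\pm 1}, \sigma_2^{\pm 1}\}$ I push the appropriate $s$-adjusted token onto the right end of the list, then repeatedly apply local simplifications at the top of the list until it is stable: free reduction $\sigma_i^{\epsilon}\sigma_i^{-\epsilon} \to \emptyset$, and the collapses $\sigma_1\sigma_2\sigma_1\to\Delta$ and $\sigma_2\sigma_1\sigma_2\to\Delta$ (and their inverse patterns to $\Delta^{-1}$). Each collapse produces a $\Delta^{\pm 1}$ that must be migrated to the base via the twist relations $\Delta\sigma_1 = \sigma_2\Delta$ and $\Delta\sigma_2 = \sigma_1\Delta$ together with the centrality of $\Delta^2$; the key trick is to perform this migration lazily in $O(1)$ by incrementing $a$ and toggling $s$ instead of physically swapping every intervening token. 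Each input letter thus participates in at most $O(1)$ simplifications before becoming permanent, so the whole scan is linear.

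After the input is exhausted, a final cyclic pass handles wrap-around simplifications. The state is now $\Delta^a\cdot W$, with $W$ a cyclically freely reduced word that contains no further $\sigma_1\sigma_2\sigma_1$ or $\sigma_2\sigma_1\sigma_2$ subword. A short combinatorial case check shows such a $W$ must be either empty, a single $\sigma_i^{\pm}$-power, $\sigma_1\sigma_2$, $(\sigma_1\sigma_2)^2$, or an alternation of positive powers of $\sigma_1^{-1}$ and $\sigma_2$. Splitting $a=2k+\varepsilon$ via centrality of $\Delta^2$ and applying at most a bounded number of cyclic rotations and one $\Delta$-conjugation $\sigma_1\leftrightarrow\sigma_2$, I match the output against exactly one of $\Omega_0,\dots,\Omega_6$.

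The main obstacle is to certify that the lazy sign-flag trick is compatible with every subsequent operation on the list, since toggling $s$ globally reinterprets every existing token. This works precisely because the swap $\sigma_1\leftrightarrow\sigma_2$ exchanges the two collapse patterns $\sigma_1\sigma_2\sigma_1$ and $\sigma_2\sigma_1\sigma_2$, both of which collapse to the same $\Delta$, and also preserves the free-reduction pattern $\sigma_i^{\epsilon}\sigma_i^{-\epsilon}$; hence the $O(1)$ pattern checks at the top of the list remain correct regardless of $s$, and insertions of new input letters need only be $s$-adjusted once upon being pushed. Once this is verified, the algorithm runs in $O(|w|)$ time, and the bound $|N(w)| = O(|w|)$ follows automatically, as the output length is dominated by the total work performed.
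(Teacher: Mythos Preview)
Your lazy $\Delta$-migration trick via the parity bit is sound, and the amortized linear bound on the main scan is correct. The gap is in your final classification step. You claim that a cyclically freely reduced word $W$ containing no $\sigma_1\sigma_2\sigma_1$, $\sigma_2\sigma_1\sigma_2$, or their inverse patterns must be empty, a single $\sigma_i^{\pm}$-power, $\sigma_1\sigma_2$, $(\sigma_1\sigma_2)^2$, or an alternation of powers of $\sigma_1^{-1}$ and $\sigma_2$. This is false: take $W=\sigma_1^2\sigma_2^2$. It is cyclically freely reduced, its cyclic length-$3$ subwords are $\sigma_1\sigma_1\sigma_2$, $\sigma_1\sigma_2\sigma_2$, $\sigma_2\sigma_2\sigma_1$, $\sigma_2\sigma_1\sigma_1$, so no collapse applies, and your algorithm terminates with $\Delta^0\cdot\sigma_1^2\sigma_2^2$. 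No cyclic rotation or $\Delta$-conjugation of this word lands in any $\Omega_i$; in fact the genuine normal form is $\Delta^2\sigma_1^{-2}\in\Omega_4$, which requires extracting a hidden $\Delta^2$ that your rule set cannot see. Similar failures occur for any $\sigma_1^p\sigma_2^q$ with $p,q\geq 2$.

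The underlying problem is that your rewrite system is far from confluent on $\mathbf{B}_3$: detecting the collapse pattern $\sigma_1\sigma_2\sigma_1$ literally is much weaker than detecting divisibility by $\Delta$ in the Garside sense. The paper sidesteps this entirely by passing to the presentation $G=\langle a,b\mid a^2=b^3\rangle$ via $\varphi^{-1}(\sigma_1)=c^{-1}b^2a$, $\varphi^{-1}(\sigma_2)=c^{-1}ab^2$, etc. In $G$ the only relations are $a^2=c$ and $b^3=c$ with $c$ central, so a single left-to-right scan collecting $c$'s produces a word of the form $c^na^{\varepsilon_1}b^{l_1}a\cdots ab^{l_m}a^{\varepsilon_2}$ with $l_i\in\{1,2\}$, after which at most $m$ cyclic conjugations fix $\varepsilon_1,\varepsilon_2$. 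Translating back via $\varphi(ab)=\Delta^2\sigma_1^{-1}$ and $\varphi(ab^2)=\Delta^2\sigma_2$ then lands directly in one of the $\Omega_i$. If you want to salvage your direct approach, you would need additional rewrite rules (for instance $\sigma_i\sigma_j\to\Delta\sigma_i^{-1}$ for $i\neq j$) together with a termination argument; but at that point you are essentially reimplementing the $G$-translation by hand.
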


\begin{proof}
We closely follow the proof of \cite[Prop.2.1]{MR356023}, turning it into an algorithm with the required properties.

The braid group $\mathbf{B}_3$ is isomorphic to $G=\langle a,b\mid a^2=b^3\rangle$, using the isomorphism $\varphi\colon G\to \mathbf{B}_3$ given by $\varphi(a) = \sigma_1\sigma_2\sigma_1$ and $\varphi(b) = \sigma_1\sigma_2$. Then inverse satisfies $\varphi^{-1}(\sigma_1) = b^{-1}a$ and $\varphi^{-1}(\sigma_2) = a^{-1}b^2$. Write $c = a^2 = b^3$ for the element that generates the center of $G$. Then
\[
\varphi^{-1}(\sigma_1) = c^{-1}b^2a, \hspace{0.5cm} \varphi^{-1}(\sigma_1^{-1}) = c^{-1}ab, \hspace{0.5cm}\varphi^{-1}(\sigma_2) = c^{-1}ab^2,\hspace{0.5cm}\varphi^{-1}(\sigma_2^{-1}) = c^{-1}ba,
\]
and we get $\varphi^{-1}(w) = c^{-|w|}v_1\cdots v_{|w|}$ with each $v_i\in \{b^2a, ab,ab^2, ba\}$. The length of $\varphi^{-1}(w)$ in $a,b,c$ is at most $4|w|$. Some of the $v_iv_{i+1}$ can contain $a^2$ or $b^3$ in which case we replace them with $c$ and move it to the front. That is, we can write $\varphi^{-1}(w)$ as
\begin{equation}\label{eq:ab_alm_norm_form}
c^n, c^na, c^nb^l, \mbox{ or }c^na^{\varepsilon_1}b^{l_1}ab^{l_2}\cdots ab^{l_m}a^{\varepsilon_2},
\end{equation}
with $n\in \Z$, $l,l_1,\ldots, l_m\in \{1,2\}$, $\varepsilon_1,\varepsilon_2\in \{0,1\}$. Furthermore, the length of this word in $a,b,c$ is still at most $4|w|$, and this form can be obtained in linear time depending on $|w|$.

We want to get $\varphi^{-1}(w)$ to be conjugate to one of
\begin{equation}\label{eq:ab_norm_form}
c^n, c^na, c^nb^l, \mbox{ or }c^nab^{l_1}ab^{l_2}\cdots ab^{l_m},
\end{equation}
that is, in the case of the last word in (\ref{eq:ab_alm_norm_form}) with $\varepsilon_1=1$ and $\varepsilon_2=0$. Let us check the other cases for this word. If $\varepsilon_1=0$ and $\varepsilon_2 = 1$, we have
\[
c^nb^{l_1}ab^{l_2}\cdots ab^{l_m}a \sim c^nab^{l_1}\cdots ab^{l_m},
\]
where $\sim$ stands for the conjugation relation. The latter is in the form of (\ref{eq:ab_norm_form}).

If $\varepsilon_1=0=\varepsilon_2$, we have
\[
c^nb^{l_1}ab^{l_2}\cdots ab^{l_m} \sim c^nab^{l_2}\cdots ab^{l_m+l_1}
\]
with $l_m+l_1\in \{2,3,4\}$. If $l_m+l_1=2$, this is in the form of (\ref{eq:ab_norm_form}). If $l_m+l_1 = 4$ we get an extra $c$, but it is also in the form of (\ref{eq:ab_norm_form}). If $l_m+l_1 = 3$, we get another $c$ and we have case $\varepsilon_1=1=\varepsilon_2$, but with $m$ replaced by $m-2$. Note that we can assume $m\geq 2$, since for $m=1$ the original word is $c^nb^l$. Also, if $m = 2$, we now have the word $c^{n+1}a^2 = c^{n+2}$, which is also in the form of (\ref{eq:ab_norm_form}).

So, now if $\varepsilon_1=1=\varepsilon_2$, we have
\[
c^nab^{l_1}\cdots ab^{l_m}a \sim c^na^2b^{l_1}\cdots ab^{l_m} = c^{n+1}b^{l_1}\cdots ab^{l_m},
\]
which brings us back to the case $\varepsilon_1=0=\varepsilon_2$. Nevertheless, after at most $m$ steps, we get $\varphi^{-1}(w)$ to be in the form of (\ref{eq:ab_norm_form}).

Note that $\varphi(c) = \Delta^2$, $\varphi(ab) = \Delta^2\sigma_1^{-1}$ and $\varphi(ab^2) = \Delta^2\sigma_2$, so applying $\varphi$ to a word in (\ref{eq:ab_norm_form}) lands in one of $\Omega_0,\ldots,\Omega_6$, and this is the required normal form $N(w)$.
\end{proof}

For words in $\Omega_0, \Omega_1, \Omega_2$ and $\Omega_3$ the integral Khovanov homology of the corresponding braid closure is explicitly given in \cite[Cor.5.7]{MR4430925}. For $\Omega_4$ and $\Omega_5$, the integral Khovanov homology can be obtained using \cite[Alg.1]{kelo2024discrete}. We will give a slightly different approach using \cite{schuetz2025kh3braids}, and express the Khovanov homology of the braid closure of $\Delta^{2k}w\in \Omega_4\cup \Omega_5\cup \Omega_6$ in terms of the Khovanov homology of the torus link and the braid closure of $w$.

\begin{theorem}\label{thm:case_omega_4}
Let $L$ be the braid closure corresponding to the word $\Delta^{2k}\sigma_1^{-l}$ with $k,l\geq 1$. Then
\begin{align*}
\HKh^{4k, 12k-l\pm 1}(L) &= \Z \\
\HKh^{4k-1, 12k-l-1}(L) &= \left\{
\begin{array}{cc}
\Z & l = 1, \\
0 & l\geq 2,
\end{array}
\right.\\
\HKh^{4k-1, 12k-l-3}(L) &= \left\{
\begin{array}{cc}
\Z/2\Z & l = 1,\\
0 & l\geq 2,
\end{array}
\right.\\
\HKh^{4k-2, 12k-l-3}(L) &= \left\{
\begin{array}{cc}
0 & l = 1,\\
\Z^2 & l = 2,\\
\Z& l\geq 3,
\end{array}
\right.\\
\HKh^{4k-2, 12k-l-5}(L) &= \left\{
\begin{array}{cc}
\Z & l = 1,\\
\Z^2 & l = 2,\\
\Z/2\Z& l\geq 3,
\end{array}
\right.\\
\HKh^{4k-3, 12k-l-7}(L) &= \left\{
\begin{array}{cc}
0 & l\leq 2,\\
\Z & l\geq 3,
\end{array}
\right.\\
\HKh^{4k-4, 12k-l-7}(L) &= \left\{
\begin{array}{cc}
\Z & l\leq 3,\\
\Z^3 & l = 4, \\
\Z^2 & l \geq 5,
\end{array}
\right.\\
\HKh^{4k-4, 12k-l-9}(L) &= \left\{
\begin{array}{cc}
\Z & l\leq 3,\\
\Z^2 & l = 4,\\
\Z/2\Z & l\geq 5.
\end{array}
\right.
\end{align*}
For any integer $j$ we have
\[
\HKh^{4k-5, j}(L) = \HKh^{4k-5, j+l}(T(3, 3k)) \oplus \HKh^{-5, j-12k+1}(T(2, -l)),
\]
and for any integers $i \leq 4k-6$ and $j$ we have
\[
\HKh^{i,j}(L) = \HKh^{i, j+l}(T(3, 3k)) \oplus \HKh^{i-4k, j-12k+1}(T(2,-l))\oplus \HKh^{i-4k, j-12k-1}(T(2,-l)).
\]
If $k = 1$ we also have
\[
\HKh^{0, 12-l-5}(L) = \left\{
\begin{array}{cc}
0 & l \leq 3,\\
\Z & l \geq 4,
\end{array}
\right.
\]
and if $k\geq 2$ we also have
\begin{align*}
\HKh^{4k-3, 12k-l-3}(L) &= \Z\\
\HKh^{4k-3, 12k-l-5}(L) &= \left\{
\begin{array}{cc}
\Z & l \leq 3,\\
\Z/2\Z & l \geq 4.
\end{array}
\right.
\end{align*}
In all other bigradings, the homology groups are $0$.
\end{theorem}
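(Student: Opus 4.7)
The plan is to apply the decomposition result of \cite{schuetz2025kh3braids}, which expresses the Khovanov complex of a closed $3$-braid with Murasugi form $\Delta^{2k}w$ (where $w$ is an alternating $2$-braid word) as a chain-homotopy direct sum, outside a bounded window of homological degrees near $i=4k$, of a bigrading-shifted copy of $\CKh(T(3,3k))$ and a bigrading-shifted copy of $\CKh(L_w)$. In our setting $w=\sigma_1^{-l}$, the $3$-braid closure $L_w$ is the disjoint union $T(2,-l)\sqcup U$, since strand $3$ closes off to a separate unknot $U$.

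Concretely, I would first construct the Bar-Natan complex $\llbracket\Delta^{2k}\sigma_1^{-l}\rrbracket$ over $\Cobl^{\mathbb Z}(6)$ as the tensor product of $\llbracket \Delta^{2k}\rrbracket$ and $\llbracket \sigma_1^{-l}\rrbracket$, deloop all resulting circles (Figure~\ref{Delooping isomorphism}), and then apply the Morse matching from \cite{schuetz2025kh3braids}. In homological degrees $i\le 4k-6$ this produces the splitting
\[
\HKh^{i,j}(L)\;\cong\;\HKh^{i,j+l}(T(3,3k))\;\oplus\;\HKh^{i-4k,j-12k}(T(2,-l)\sqcup U),
\]
after which the Künneth isomorphism $\HKh(T(2,-l)\sqcup U)\cong \HKh(T(2,-l))\otimes \HKh(U)$ yields the two $q$-shifts $(12k\pm 1)$ in the statement. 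The overall $(0,l)$ shift on the $T(3,3k)$ factor accounts for the $l$ extra negative crossings in the diagram for $L$, while the $(4k,12k)$ shift on the $T(2,-l)\sqcup U$ factor is the bigrading of the top $\mathbb Z$ summand of $\HKh(T(3,3k))$ provided by \cite[Cor.~5.7]{MR4430925}. Substituting the known values of $\HKh(T(3,3k))$ and $\HKh(T(2,-l))$ then gives the claimed formula in the stable range.

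For $i=4k-5$ one of the two unknot copies in the second summand is cancelled against a boundary differential inherited from the top of $\CKh(T(3,3k))$, leaving only a single copy of $\HKh^{-5}(T(2,-l))$, as stated. For $i\ge 4k-4$ the decomposition breaks down, and I would compute directly by iterated Gaussian elimination (Lemma~\ref{lemma: gaussian elimination}) on the small sub-complex that survives the Morse matching in the top four homological degrees, using the sphere, neck-cutting, and double-dot relations of Figure~\ref{Figure: relations of Cob}. The main obstacle is verifying the $\mathbb Z/2\mathbb Z$ torsion contributions in bigradings such as $(4k-1,12k-l-3)$ for $l=1$, $(4k-2,12k-l-5)$ for $l\ge 3$, and $(4k-4,12k-l-9)$ for $l\ge 5$: each such class must be identified as arising from a neck-cutting $2\cdot\mathrm{id}$ differential after delooping, and one has to confirm that no subsequent Gaussian elimination either collapses it or produces additional $\mathbb Z$ summands. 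Tracking these boundary differentials is precisely what forces the case distinctions on $l\in\{1,2,3,4\}$ versus $l\ge 5$ and on $k=1$ versus $k\ge 2$ recorded in the statement.
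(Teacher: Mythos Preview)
Your plan is essentially the paper's own approach: both rely on the structural results of \cite{schuetz2025kh3braids} to reduce $\CKh(L)$ to a small explicit complex that splits, outside a window near $i=4k$, into a $T(3,3k)$-part and a $T(2,-l)\sqcup U$-part, and then case-check the interaction in the window. One technical difference worth noting: the paper does not work directly over $\Cobl^{\mathbb Z}(6)$ as you propose, but first closes the leftmost strand to obtain a $(2,2)$-tangle $T_{k,l}$, invokes \cite[Prop.~7.3]{schuetz2025kh3braids} to identify $q^{l-6k}\llbracket T_{k,l}\rrbracket$ with a sub-complex of an explicit complex $\mathcal{B}_k$ over $\Cobl^{\mathbb Z}(B^2_2)$, and then closes the remaining strand via a functor $G$ to land in $A$-modules; the resulting complex $\mathcal{C}_k$ (given pictorially) decomposes into ``knight'' pieces $A\xrightarrow{2X}uq^2A$ and ``pawn'' pieces $q^jA$, and the $l$-dependence enters simply as which gray generators are present in the sub-complex and hence which identity morphisms are available for cancellation. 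This makes the boundary case analysis mechanical rather than requiring ad hoc Gaussian eliminations. Your reference to ``the decomposition result'' is slightly off: the $\Omega_6$ case uses \cite[Thm.~9.2]{schuetz2025kh3braids}, but $\Omega_4$ goes through \cite[Prop.~7.3 and Thm.~7.4]{schuetz2025kh3braids} instead, precisely because $L_w$ is split here.
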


We delay the proof of this theorem until the next section.

\begin{theorem}\label{thm:case_omega_5}
Let $L$ be the braid closure corresponding to the word $\Delta^{2k}\sigma_2^l$ with $k,l\geq 1$. For $i < 4k$ we have
\[
\HKh^{i,j}(L) = \HKh^{i, j-l}(T(3, 3k)).
\]
For $i > 4k+1$ we have
\[
\HKh^{i,j}(L) = \HKh^{i-4k, j-12k+1}(T(2,l))\oplus \HKh^{i-4k, j-12k-1}(T(2,l)).
\]
The other non-zero homology groups are given by
\begin{align*}
\HKh^{4k,12k+l-3}(L) &= \Z, \\
\HKh^{4k, 12k+l-1}(L) &= \Z^2,\\
\HKh^{4k+1, 12k+l+1}(L) &= \Z/2\Z,\\
\HKh^{4k+1, 12k+l+3}(L) &= \Z.
\end{align*}
\end{theorem}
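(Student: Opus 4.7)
The plan is to compute the Khovanov homology of the braid closure $L$ of $\Delta^{2k}\sigma_2^l$ by analyzing its Bar-Natan tangle complex, closely paralleling the approach of Theorem \ref{thm:case_omega_4} and exploiting the decomposition principle from \cite{schuetz2025kh3braids}. Concretely, I would start with the tangle complex $\llbracket \Delta^{2k}\sigma_2^l \rrbracket = \llbracket \Delta^{2k}\rrbracket \otimes \llbracket \sigma_2 \rrbracket^{\otimes l}$ in $\Cobl^{\mathbb{Z}}(6)$, then apply the planar closure to obtain a chain complex whose homology is $\HKh(L)$. The homological and $q$-shifts in the statement are forced by the fact that all $l$ crossings in $\sigma_2^l$ are positive, contributing $+l$ to $n_+$ and shifting the complex by $u^0 q^l$ before renormalization.

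The key structural observation is that the $l$ copies of $\sigma_2$ form a sub-cube of resolutions sitting on top of the complex for $\Delta^{2k}$. Resolving all $\sigma_2$'s as $0$-smoothings gives back the diagram of $T(3,3k)$ together with an extra unknot, and hence contributes a copy of $\HKh(T(3,3k))$ with the appropriate grading shift. The opposite corner, where every $\sigma_2$ is $1$-smoothed, produces a diagram whose closure is a connected sum of $T(2,l)$-type pieces with the $1$-resolution of $\Delta^{2k}$. After applying the delooping isomorphism of Figure \ref{Delooping isomorphism} to every circle appearing in intermediate resolutions and iterating Lemma \ref{lemma: gaussian elimination} (equivalently, setting up a Morse matching in the sense of Theorem \ref{Theorem: discrete Morse theory}), the full cube collapses onto these two corners away from the intermediate homological degrees.

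The next step is to verify the bigraded identifications. For $i < 4k$, only the $T(3,3k)$ corner survives, giving $\HKh^{i,j}(L) \cong \HKh^{i,j-l}(T(3,3k))$. For $i > 4k+1$, only the $T(2,l)$ corner survives; the shift $i \mapsto i-4k$ comes from the homological degree of the all-$1$ smoothing of $\Delta^{2k}$, while the two $q$-shifts $12k \pm 1$ reflect the well-known knight-move structure in the extremal $q$-gradings of $T(3,3k)$, which forces two independent copies of each $T(2,l)$ group. A careful bookkeeping of $q$-shifts, using that each delooping of a circle splits a module into a $q$-up and $q$-down summand, then gives the precise formula.

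The main obstacle, and the reason the statement lists explicit low-rank corrections at $i = 4k$ and $i = 4k+1$, is controlling the boundary homological degrees where both corners of the cube contribute and the connecting differentials between them are nontrivial. Computing the four special groups $\HKh^{4k,12k+l-3}(L)$, $\HKh^{4k,12k+l-1}(L)$, $\HKh^{4k+1,12k+l+1}(L)$ and $\HKh^{4k+1,12k+l+3}(L)$ requires tracking the surviving dotted cobordisms after all Gaussian eliminations and resolving the extension problem; in particular, identifying the $\mathbb{Z}/2\mathbb{Z}$ in $\HKh^{4k+1,12k+l+1}(L)$ mirrors the appearance of torsion in Theorem \ref{thm:case_omega_4}. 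I would handle this by induction on $l$ via the skein long exact sequence associated with the final $\sigma_2$ crossing, using the $l=1$ and $l=2$ cases (which can be checked against the explicit formulas of \cite[Cor.5.7]{MR4430925} applied to elements of $\Omega_1$ and $\Omega_2$) as base cases.
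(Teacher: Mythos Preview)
The paper does not give a proof here at all: it simply records that the statement ``follows directly from \cite[Thm.~6.4]{schuetz2025kh3braids}.'' So the only thing to compare against is the machinery of that companion paper, which is also what drives the neighbouring proof of Theorem~\ref{thm:case_omega_4}. Your proposal does not actually parallel that proof. The argument for Theorem~\ref{thm:case_omega_4} does \emph{not} resolve the extra $\sigma_1^{-l}$ crossings into a cube and cancel; instead it starts from a precomputed simplification $\mathcal{B}_k$ of the tangle complex (Figure~\ref{fig:big_complex}, quoted from \cite[Prop.~7.3]{schuetz2025kh3braids}), applies the closure functor $G$ so that every $c$-morphism vanishes, and then reads the homology off from the resulting knight and pawn pieces. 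The $\Omega_5$ case in \cite{schuetz2025kh3braids} is handled the same way. Your plan to induct on $l$ via the skein long exact sequence on the last $\sigma_2$ is a genuinely different and more laborious route; it could be made to work, but it is not what is happening here.

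There are also concrete errors in the sketch that would block the argument as written. First, the $0$-smoothing of all the $\sigma_2$'s gives exactly $T(3,3k)$, with no extra unknot. Second, your proposed base cases are wrong: $\Delta^{2k}\sigma_2$ and $\Delta^{2k}\sigma_2^2$ are in $\Omega_5$, not in $\Omega_1$ or $\Omega_2$ (their exponent sums already rule out conjugacy), so \cite[Cor.~5.7]{MR4430925} does not cover them and you have no independent anchor for the induction. Third, the explanation of the shift $i\mapsto i-4k$ is off: you are not $1$-smoothing $\Delta^{2k}$, and the $4k$ is the top homological degree of $\HKh(T(3,3k))$, not a cube coordinate of the full twist. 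Without a correct identification of the two ``corners'' and a valid base case, the induction does not start.
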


This theorem follows directly from \cite[Thm.6.4]{schuetz2025kh3braids}.

\begin{theorem}\label{thm:case_omega_6}
Let $w = \sigma_1^{-p_1}\sigma_2^{q_1}\cdots \sigma_1^{-p_r}\sigma_2^{q_r}$ with positive integers $r, p_1,q_1,\ldots, p_r, q_r$, let $L_w$ be the braid closure corresponding to $w$ and $L$ be the braid closure corresponding to $\Delta^{2k}w$ with $k\geq 1$. Then for integers $i \not=4k, 4k+1$ and $j$ we have
\begin{equation}\label{eq:generic_formula}
\HKh^{i,j}(L) = \HKh^{i,j-t}(T(3, 3k))\oplus \HKh^{i-4k, j-12k}(L_w),
\end{equation}
with
\[
t = q_1+\cdots +q_r - (p_1 + \cdots + p_r).
\]
The remaining non-zero homology groups are given by
\begin{align*}
\HKh^{4k, 12k+t-1}(L) &= \HKh^{0,t-1}(L_w),\\
\HKh^{4k, 12k+t+1}(L) &= \HKh^{0,t+1}(L_w)/\Z,\\
\HKh^{4k+1, 12k+t+1}(L) &= \HKh^{1,t+1}(L_w)\oplus \Z/2\Z,\\
\HKh^{4k+1, 12k+t+3}(L) &= \HKh^{1,t+3}(L_w)\oplus \Z.
\end{align*}
\end{theorem}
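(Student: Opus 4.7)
The plan is to extend the approach used for Theorem \ref{thm:case_omega_5} by replacing the $\sigma_2^l$ factor with the alternating word $w = \sigma_1^{-p_1}\sigma_2^{q_1}\cdots\sigma_1^{-p_r}\sigma_2^{q_r}$. The key input from \cite{schuetz2025kh3braids} is a simplified model for the tangle complex $\llbracket \Delta^{2k}\rrbracket$ over $\Cobl^{\mathbb Z}(6)$: after iterated delooping and Gaussian elimination, it is chain homotopy equivalent to a mapping cone $\operatorname{Cone}(f\colon A\to B)$, where $A$ is a complex whose braid closure (upon tensoring with the identity tangle) recovers $\CKh(T(3,3k))$ up to explicit shifts, and $B$ is a small complex supported on the identity tangle and concentrated in homological degrees $4k$ and $4k+1$, with $q$-degrees aligned so as to produce the four additional summands listed in Theorem \ref{thm:case_omega_5}.

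First, I would tensor this simplified form with $\llbracket w\rrbracket$ and take the braid closure to form $\CKh(L)$. Because $B$ is supported on the identity tangle, its tensor product with $\llbracket w\rrbracket$ closes to $\CKh(L_w)$ with bigrading shifts $(4k, 12k)$ coming from the homological and $q$-positions of $B$, plus the writhe shift $t = q_1+\cdots+q_r - (p_1+\cdots+p_r)$ intrinsic to $w$. The $A$-piece closes to $\CKh^{\ast,\ast-t}(T(3,3k))$. In every homological grading $i\notin\{4k,4k+1\}$, the two pieces sit in the cone at homological positions that are separated from one another by the shift, so the connecting differential of the cone is vertically disjoint from the generators supporting $\HKh^i$. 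This immediately yields the direct sum formula (\ref{eq:generic_formula}).

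Second, I would analyse the connecting map $f_\ast$ induced on homology at the critical gradings $i = 4k, 4k+1$. From \cite[Cor.5.7]{MR4430925} the torus-link contribution in homological degree $4k$ consists of a free summand in $q = 12k-1$ and a free summand in $q = 12k+1$, which after the $t$-shift sit in bidegrees $(4k, 12k+t-1)$ and $(4k, 12k+t+1)$; these are precisely the bidegrees where the $L_w$-contributions $\HKh^{0,t-1}(L_w)$ and $\HKh^{0,t+1}(L_w)$ also live. By comparing with Theorem \ref{thm:case_omega_5} (the case $L_w = T(2,l)$, $t=l$), one sees that $f_\ast$ is zero on the generator in $q=12k+t-1$ and injective on the generator in $q=12k+t+1$, giving the quotient by $\mathbb{Z}$ in $\HKh^{4k,12k+t+1}(L)$. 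The remaining $\mathbb Z/2\mathbb Z$ and $\mathbb Z$ summands in homological degree $4k+1$ come from $B$ itself and are direct summands of the cone, which is why they simply add onto $\HKh^{1,t+1}(L_w)$ and $\HKh^{1,t+3}(L_w)$.

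The main obstacle will be pinning down $f_\ast$ for an arbitrary alternating $w$, since thinness of $\HKh(L_w)$ (concentration on two $\delta$-diagonals) constrains but does not by itself determine the map. I would handle this by exploiting that the cobordism $f$ is intrinsic to $\llbracket \Delta^{2k}\rrbracket$ and sees $\llbracket w\rrbracket$ only through its two extremal $q$-diagonals, which for alternating 3-braid words have a uniform description in $\Cobl^{\mathbb Z}(6)$ (dotted cups/caps capped off by delooped circles). This reduces the calculation to a comparison with the base case $w = \sigma_2^{q_1}$ covered by Theorem \ref{thm:case_omega_5}, where the pattern of ranks, torsions, and extensions is already established, and then a universal coefficient argument on the thin complex $\CKh(L_w)$ transports the same pattern to arbitrary $w$.
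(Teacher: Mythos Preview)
Your overall strategy---pull the structural decomposition of $\llbracket \Delta^{2k}\rrbracket$ from \cite{schuetz2025kh3braids}, tensor with $\llbracket w\rrbracket$, close up, and analyse the interaction---is the right one, and it is what the paper does. But the shape of the decomposition you posit and the scope of the ``easy'' degrees are both off in ways that matter.

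First, the small piece is not supported only in degrees $4k$ and $4k+1$. The actual input from \cite[Thm.~9.2]{schuetz2025kh3braids} is a splitting $\CKh(L)\simeq B_k\oplus D_w$ where $B_k$ realises the torus-link homology only in degrees $i\leq 4k-5$, and $D_w$ sits in a short exact sequence $0\to T\to D_w\to u^{4k}q^{12k}C_w\to 0$ with $T$ an explicit complex spread over degrees $4k-4,\ldots,4k+1$ (two pawns and two knights). So the direct-sum formula \eqref{eq:generic_formula} is only automatic for $i\leq 4k-5$ and $i\geq 4k+2$; the range $4k-4\leq i\leq 4k-1$ genuinely requires work. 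Your claim that the two contributions are ``vertically disjoint'' for all $i\notin\{4k,4k+1\}$ is false: since $w$ has negative crossings, $\HKh(L_w)$ lives in negative homological degrees as well, so after the shift by $4k$ it overlaps the torus-link range extensively.

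Second, the paper does not determine the connecting map by comparison with the $\sigma_2^l$ case plus a universal-coefficients transport. Instead it runs the reduced/unreduced long exact sequence for $L_{\Delta^{2k}w}$, identifies the connecting homomorphism with twice the first differential of the reduced integral Bar-Natan--Lee--Turner spectral sequence via \cite[Lm.~5.6]{MR4873797}, and uses that this differential cancels specific $\Z$-summands coming from $T$ (surviving or not to $E_2$) to pin down each bidegree $4k-4\leq i\leq 4k+1$ individually. This is how the torus-link contributions in degrees $4k-4,\ldots,4k-1$ get reassembled and how the extra $\Z/2\Z$ and $\Z$ in degree $4k+1$, and the $/\Z$ in degree $4k$, are established. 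Your proposed reduction to the $w=\sigma_2^{q_1}$ base case does not give access to this: the BLT behaviour of $T$ inside $D_w$ is what drives the argument, not the thinness of $\CKh(L_w)$ alone.
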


Notice that $\HKh^{0, t+1}(L_w)$ contains at least one direct summand of $\Z$, and here $\HKh^{0,t+1}(L_w)/\Z$ refers to the homology group with this summand of $\Z$ removed. The proof of this theorem is also delayed until the next section.

In all three cases, the homology behaves as in (\ref{eq:generic_formula}) with a few exceptional bidegrees. Perhaps surprisingly, we have more exceptions in the case corresponding to $\Omega_4$ than to $\Omega_6$. This is because the closure of the braid word $\sigma_1^{-l}$ is a split link whose Khovanov homology is not as thin as the Khovanov homology corresponding to the alternating words used in $\Omega_6$.

Theorems \ref{thm:case_omega_4} - \ref{thm:case_omega_6} assume $k\geq 1$. For negative $k$, observe that inverting braid words flip the sets $\Omega_4$ and $\Omega_5$, while the inverse of $\Delta^{2k}\sigma_1^{-p_1}\sigma_2^{q_1}\cdots \sigma_1^{-p_r}\sigma_2^{q_r}$ is conjugate to $\Delta^{-2k}\sigma_1^{-q_r}\sigma_2^{p_r}\cdots \sigma_1^{-q_1}\sigma_2^{p_1}$. We can therefore obtain the Khovanov homology by dualizing the Khovanov homology of the mirror.

The Khovanov homology of $T(3,3k)$ and $T(2,\pm l)$ is given explicitly in \cite[Cor.5.7]{MR4430925} and \cite[\S 6.2]{MR1740682}, so the only remaining ingredient is the Khovanov homology of the braid closure of $\sigma_1^{-p_1}\sigma_2^{q_1}\cdots \sigma_1^{-p_r}\sigma_2^{q_r}$. There does not seem to be a simple formula for it, but since these links are alternating and non-split, we can derive it from the Jones polynomial and the signature, see \cite{MR4407084}. The signature for our braid closure is $t = q_1+\cdots +q_r - (p_1 + \cdots + p_r)$, compare \cite[Cor.8.3]{schuetz2025kh3braids}. For the convenience of the reader, and also to see that deriving the Khovanov homology is done in polynomial time, we now give an explicit formula for the Khovanov homology of a quasi-alternating oriented link $L$, given the Jones polynomial and the signature $s=s(L)$.

First note that the Jones polynomial is usually given as a Laurent polynomial in a variable $t^{\frac{1}{2}}$. In order to be closer to Khovanov homology we use the substitution $q=t^2$, and we write the Jones polynomial of $L$ as
\[
J_L(q) = \sum_{j\in \Z}a_j q^j.
\]
It is worth noting that $a_j = 0$ unless $j\in 1+|L|+2\Z$, and only finitely many $a_j$ are non-zero. The reduced Khovanov homology groups are easily obtained from the coefficients $a_j$. Namely, by \cite{MR2509750} the homology groups are free abelian, with non-zero groups only occuring in bidegrees $(j,s+2j)$, and the Betti numbers given by
\[
\tilde{b}^{j,s+2j} = |a_{s+2j}|.
\]
Expressing the unreduced Khovanov homology is slightly more difficult. We use the reduced Bar-Natan--Lee--Turner spectral sequence over $\Z$, which for quasi-alternating links collapses at the $E_2$-page, with these groups being free abelian and of total rank $2^{|L|-1}$, compare \cite{MR4873797}.

In particular, there exists a homomorphism $d\colon \rKh^{j,s+2j}(L)\to \rKh^{j+1, s+2j+2}(L)$ with $d^2=0$ and the corresponding homology groups $E_2^{j,s+2j}$ free abelian. Let us denote the Betti numbers for these groups by $\tilde{b}_{\mathrm{BLT}}^{j,s+2j}(L)$. Then $0\leq \tilde{b}_{\mathrm{BLT}}^{j,s+2j}(L)\leq \tilde{b}^{j,s+2j}(L)$. The exact values can be determined using linking numbers, but for now we simply note that $\tilde{b}_{\mathrm{BLT}}^{j,s+2j}(L)=0$ for odd $j$, and $\tilde{b}_{\mathrm{BLT}}^{0,s}(L)\geq 1$. Define
\[
\overline{b}^j = \tilde{b}^{j,s+2j}(L) - \tilde{b}_{\mathrm{BLT}}^{j,s+2j}(L).
\]

\begin{lemma}\label{lem:red2unred}
Let $L$ be a quasi-alternating oriented link with signature $s$. The Betti numbers of unreduced Khovanov homology are given by
\begin{align*}
b^{j,s+2j-1}(L) &= \sum_{i = j_{\min}}^j(-1)^{j+i}\overline{b}^i+\tilde{b}_{\mathrm{BLT}}^{j,s+2j}(L),\\
b^{j,s+2j+1}(L) &= \sum_{i = j_{\min}}^{j-1}(-1)^{j+i+1}\overline{b}^i+\tilde{b}_{\mathrm{BLT}}^{j,s+2j}(L).
\end{align*}
Here $j_{\min} = \min\{j\in \Z\mid \tilde{b}^{j,s+2j}\not=0\}$.
All other Betti numbers are $0$. Any torsion elements have order $2$, and the only possibly non-zero torsion coefficients are given by
\[
t_2^{j,s+2j-1}(L) = \sum_{i = j_{\min}}^{j-1}(-1)^{j+i+1}\overline{b}^i.
\]
\end{lemma}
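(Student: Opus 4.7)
My plan is to exploit the long exact sequence relating reduced and unreduced Khovanov homology, together with the diagonal concentration that follows from the quasi-alternating hypothesis. After fixing a marked point on $L$, the reduced Khovanov chain complex $C_{\mathrm{red}}$ (whose homology is $\rKh(L)$) sits in a short exact sequence of chain complexes
\[
0 \to C_{\mathrm{red}}\{-1\} \to \CKh(L) \to C_{\mathrm{red}}\{1\} \to 0,
\]
inducing the long exact sequence
\[
\cdots \to \rKh^{i,j+1}(L) \to \HKh^{i,j}(L) \to \rKh^{i,j-1}(L) \xrightarrow{\delta} \rKh^{i+1,j+1}(L) \to \HKh^{i+1,j}(L) \to \cdots.
\]
Since $L$ is quasi-alternating, $\rKh(L)$ is free abelian and concentrated on the diagonal $j = s + 2i$ with $\tilde{b}^{i, s+2i}(L) = |a_{s+2i}|$. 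Reading the long exact sequence off this diagonal gives $\HKh^{i,j}(L) = 0$ whenever $j \neq s + 2i \pm 1$, and
\[
\HKh^{i, s+2i+1}(L) = \ker(\delta_i), \qquad \HKh^{i, s+2i-1}(L) = \operatorname{coker}(\delta_{i-1}),
\]
where $\delta_i \colon \rKh^{i, s+2i}(L) \to \rKh^{i+1, s+2(i+1)}(L)$ is the relevant component of $\delta$.

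The next step identifies $\delta_i$ with twice the first non-trivial differential $d_i$ of the reduced Bar--Natan--Lee--Turner spectral sequence, via a chain-level comparison between the Bockstein-type connecting map above and the Turner deformation. Granting this, the assumed $E_2$-collapse forces $\ker(d_i)/\operatorname{im}(d_{i-1})$ to be free of rank $\tilde{b}_{\mathrm{BLT}}^{i, s+2i}(L)$, and in particular $\operatorname{im}(d_i)$ is a direct summand of $\rKh^{i+1, s+2(i+1)}(L)$.

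The two Betti number formulas now follow from a short recursion. Writing $r_i = \operatorname{rank}(\operatorname{im}\delta_{i-1})$ and $z_i = \operatorname{rank}(\ker \delta_i)$, the identities $z_i = r_i + \tilde{b}_{\mathrm{BLT}}^{i, s+2i}(L)$ and $\tilde{b}^{i, s+2i}(L) = r_{i+1} + z_i$ combine into $r_{i+1} + r_i = \overline{b}^i$ with initial value $r_{j_{\min}} = 0$; solving gives $r_i = \sum_{k = j_{\min}}^{i-1}(-1)^{i+k+1}\overline{b}^k$. Substituting $b^{i, s+2i+1}(L) = z_i$ and $b^{i, s+2i-1}(L) = \tilde{b}^{i, s+2i}(L) - r_i$ recovers the stated expressions. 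For the torsion, $\HKh^{i, s+2i+1}(L) = \ker(\delta_i)$ is a subgroup of a free abelian group and hence free, while $\operatorname{coker}(\delta_i) = \operatorname{coker}(2 d_i)$ decomposes as $\operatorname{coker}(d_i) \oplus (\Z/2\Z)^{r_{i+1}}$ precisely because $\operatorname{im}(d_i)$ is a direct summand. Rewriting this for $\HKh^{i+1, s+2(i+1)-1}(L)$ yields the claimed torsion formula and confirms that all torsion has order $2$.

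I expect the principal obstacle to be the chain-level identification $\delta_i = 2 d_i$. Matching the connecting homomorphism of the short exact sequence above with the differential of the Turner deformation is implicit in Shumakovitch's analysis of $\mathrm{KH}$-thin links, but a self-contained verification requires a careful computation in the underlying Bar--Natan category; the rest of the argument is essentially book-keeping with the resulting short exact sequences.
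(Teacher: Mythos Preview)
Your proposal is correct and follows essentially the same route as the paper: the long exact sequence relating reduced and unreduced Khovanov homology, the identification $\delta = 2d$ (which the paper imports from \cite[Lm.5.6]{MR4873797}), and a recursion solved from the resulting rank identities. Your treatment of the torsion via the direct-summand property of $\operatorname{im}(d_i)$ is slightly more explicit than the paper's, but the argument is the same.
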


\begin{proof}
We have the long exact sequence
\[
0\longrightarrow \HKh^{j,s+2j+1}(L)\longrightarrow \widetilde{\HKh}{}^{j,s+2j}(L)\stackrel{\delta}{\longrightarrow}\widetilde{\HKh}{}^{j+1,s+2(j+1)}(L)\longrightarrow \HKh^{j+1,s+2(j+1)-1}(L)\longrightarrow 0,
\]
with $\delta = 2d$ by \cite[Lm.5.6]{MR4873797}. It follows that
\begin{equation}\label{eq:betti-1}
b^{j,s+2j-1} = \tilde{b}^{j,s+2j}-\tilde{b}^{j-1,s+2(j-1)}+b^{j-1,s+2(j-1)+1},
\end{equation}
and since the reduced Khovanov homology is torsion-free, we have
\[
b^{j,s+2j+1} = \rank \ker (d\colon \widetilde{\HKh}{}^{j,s+2j}(L) \to \widetilde{\HKh}{}^{j+1,s+2(j+1)}(L)).
\]
Note that we here drop reference to $L$ in the Betti numbers. Let us write $d^j$ to indicate the homological degree of the domain of $d$. Since $(\widetilde{\HKh}{}^{j,s+2j}(L),d^j)$ form a cochain complex whose homology is determined by the $\tilde{b}^j_{\mathrm{BLT}}$, we have
\begin{equation}\label{eq:betti+1}
b^{j,s+2j+1} = \tilde{b}^j_{\mathrm{BLT}}+\rank(d^{j-1}) = \tilde{b}^j_{\mathrm{BLT}}+\tilde{b}^{j-1}-b^{j-1,s+2(j-1)+1}.
\end{equation}
The statement about the Betti numbers follow via an induction over $j$, starting with $j_{\min}$, using (\ref{eq:betti-1}) and (\ref{eq:betti+1}). We leave the details to the reader.

For the statement about torsion coefficients, note that $\HKh^{j,s+2j+1}(L)$ is a subgroup of a torsion-free abelian group, hence torsion-free. Since $\delta = 2d$ and the second page of the reduced integral BLT-spectral sequence is torsion-free, the torsion coefficient $t_2^{j,s+2j-1}$ agrees with $\rank(d^{j-1})$. The statement therefore follows from (\ref{eq:betti+1}).
\end{proof}

\begin{algorithm}
\caption{Polynomial time algorithm for Khovanov homology of $3$-braids}  
 \textbf{Input} Braid word $w$ on the alphabet $\{\sigma_1, \sigma_2, \sigma_1^{-1}, \sigma_2^{-1}$ \}  \\
  \textbf{Output}  Khovanov homology of the braid closure $\HKh(L_w) $
   \begin{algorithmic}[1]
\State $\triangle^{2k} v \gets$ Murasugi normal form of $w$
\If{$\triangle^{2k} v \in \Omega_0\cup \Omega_1\cup \Omega_2 \cup \Omega_3$}
    \State Read off $\HKh(L_w)$ from \cite[Cor.5.7]{MR4430925}. 
\EndIf
\If{$\triangle^{2k} v \in \Omega_4\cup \Omega_5$}
    \State Enforce $k\geq 0$ by mirroring and rotating the braid.
    \State If $k=0$, then $L_w=T(2,l)\sqcup (\text{unknot})$ and $\HKh(L_w)$ is obtained with \cite[\S 6.2]{MR1740682}.
    \State Otherwise, get $\HKh(L_w)$ with \cite[\S 6.2]{MR1740682}, \cite[Cor.5.7]{MR4430925}, Theorem \ref{thm:case_omega_4}, and Theorem \ref{thm:case_omega_5}.
\EndIf
\If{$\triangle^{2k} v \in \Omega_6$}
    \State Enforce $k\geq 0$ by mirroring and rotating the braid.
    \State Compute $J_{L}(q)$ where $L=L_v$ with polynomial time algorithm of  \cite{MR1041170}.
    \State Compute the signature of $L_v$.
    \State Compute the linking numbers and from them the BLT-generators of $L_v$.
    \State Use Lemma \ref{lem:red2unred} to combine the above into $\HKh (L_v)$
    \State If $k=0$ then $\HKh (L_w)=\HKh (L_v)$.
    \State If $k> 0$ then use $\HKh(L_v)$, \cite[Cor.5.7]{MR4430925}, and Theorem \ref{thm:case_omega_6} to obtain $\HKh(L_w)$.
\EndIf
\end{algorithmic}
\end{algorithm}

\begin{proof}[Proof of Theorem \ref{thm:alg3braids}]
Let $w$ be a braid word in $\{\sigma_1,\sigma_2,\sigma_1^{-1},\sigma_2^{-1}\}$. By Proposition \ref{prop:alg_normalform} we get the Murasugi normal form $N(w)$ of $w$ in linear time. If $N(w)\in \Omega_0\cup \Omega_1\cup \Omega_2\cup \Omega_3$, we can read off the Khovanov homology of $L_w=L_{N(w)}$ from \cite[Cor.5.7]{MR4430925}. If $N(w)\in \Omega_4\cup \Omega_5$ with $k\not=0$ we can read off the Khovanov homology using Theorem \ref{thm:case_omega_4} and \ref{thm:case_omega_5} with \cite[\S 6.2]{MR1740682}. If $k=0$, $L_w$ is a split union of a $2$-stranded torus link with an unknot, so we can read off its Khovanov homology using \cite[\S 6.2]{MR1740682}.

If $N(w)\in \Omega_6$, write $N(w) = \Delta^{2k}v$ with $k\in\Z$ and $v = \sigma_1^{-p_1}\sigma_2^{q_1}\cdots \sigma_1^{-p_r}\sigma_2^{q_r}$. The Jones polynomial of $L_v$ can be obtained in polynomial time, see \cite{MR1041170}, and therefore we can obtain the Khovanov homology of $L_v$ in polynomial time from Lemma \ref{lem:red2unred}. Note that to compute the $\tilde{b}^j_{\mathrm{BLT}}(L_v)$, we need to work out the number of components of $L_v$, as well as the differences in linking numbers. Since there can be at most three components, this can be done in linear time. If $k\not=0$, we use Theorem \ref{thm:case_omega_6} to read off the Khovanov homology of $L_w$.
\end{proof}

The algorithm for determining the Jones polynomial of a braid closure given by Morton and Short \cite{MR1041170} runs in cubic time. Particularly for $3$-braids this is very fast. Bar-Natan's algorithm \cite{MR2320156} on the other hand can be very slow for certain $3$-braids. We do not need to look at the details of this algorithm to see why. The main reason is that both Bar-Natan's scanning algorithm and the divide-and-conquer algorithm produce a specific basis for a cochain complex chain homotopy equivalent to the Khovanov complex. Since there are links whose total Betti number grows exponentially, these algorithms spend exponential time on them.

A very simple class of such links is given by $3$-stranded {\em weaving links}. For a positive integer $n$, the weaving link $W(3,n)$ is the closure of the $3$-braid word $(\sigma_1^{-1}\sigma_2)^n$. The determinant of $W(3,n)$ is given by
\begin{equation}\label{eq:exp_det}
\det(W(3,n)) = \left(\frac{3+\sqrt{5}}{2}\right)^n+\left(\frac{3-\sqrt{5}}{2}\right)^n-2,
\end{equation}
see \cite[Thm.2.2]{MR4561084}. Since these weaving links are alternating, the determinant agrees with the total Betti number of reduced Khovanov homology.

For example, the determinant of $W(3,20)$, a knot with $40$ crossings, is 228,826,125. Attempting to run the scanning algorithm on this knot almost certainly leads to a memory overflow, while computing the Jones polynomial is done practically immediately. 

\begin{proof}[Proof of Theorem \ref{thm:slowscanning}]
Applying Bar-Natan's scanning algorithm or the divide-and-conquer algorithm to $W(3,n)$ leads to a basis with more than $\det(W(3,n))$ elements, from which the Khovanov homology is determined via standard linear algebra methods. As this basis grows exponentially in $n$ by (\ref{eq:exp_det}), the number of steps to do this is exponential in the number of crossings of $W(3,n)$.

Now consider the positive word $v = \sigma_1\sigma_2\sigma_1^2\sigma_2^2$. Then 
\[
v = \sigma_1\sigma_2\sigma_1^2\sigma_2^2 = \sigma_1\sigma_2\sigma_1\sigma_1\sigma_2\sigma_1\sigma_1^{-1}\sigma_2 = \Delta^2\sigma_1^{-1}\sigma_2.
\]
In particular, $v^n = \Delta^{2n}(\sigma_1^{-1}\sigma_2)^n$ has $6n$ crossings, and its total Betti number grows exponentially in $n$ by Theorem \ref{thm:case_omega_6}.
\end{proof}

Another family of links with exponentially large Betti number is constructed in Section \ref{section: Asymptotict nonvanishing}.

\section{Proofs of Theorem \ref{thm:case_omega_4} and \ref{thm:case_omega_6}}

As with Theorem \ref{thm:case_omega_5}, both theorems will follow from results in \cite{schuetz2025kh3braids}. However, we need to fill in more details in these cases. It will be useful to treat links as based links. This way we can view the Khovanov complexes as complexes over $A=\Z[X]/(X^2)$, even though we only view the homology as abelian groups. The base point is placed on the middle strand of the diagram corresponding to the braid word. 

\subsection{Proof of Theorem \ref{thm:case_omega_4}}
Let $w=\Delta^{2k}\sigma_1^{-l}$ with $k,l\geq 1$, and let $T_{k,l}$ be the tangle obtained from the braid word $w$ after connecting the two endpoints of the leftmost strand.

Then $q^{l-6k}\llbracket T_{k,l}\rrbracket$ is chain homotopy equivalent to a sub-complex of $\mathcal{B}_k$ depicted in Figure \ref{fig:big_complex}. More precisely, the sub-complex is obtained by removing all objects from the top row of the form $q^{6k-2r+1}$\smoothingZero for $r > l$. This follows from \cite[Prop.7.3]{schuetz2025kh3braids}.

\begin{figure}[ht]
\begin{center}
\begin{tikzpicture}
\node at (-6,1) {$\cdots$};
\node at (-5, 1) {$q^{-2k-5}\smoothingZero$};
\node at (-2.5,1) {$q^{-2k-3}\smoothingZero$};
\node at (0,1) {$q^{-2k-1}\smoothingZero$};
\node at (2.5,0) {$q^{-1}\smoothingZero$};
\node at (2.5,1) {$q^{-2k+1}\smoothingZero$};
\node at (5,0) {$\smoothingOne$};
\node at (5,1) {$q^{-2k+3}\smoothingZero$};
\node at (7.5,0) {$q^2\smoothingOne$};
\node at (7.5,1) {$q^{-2k+5}\smoothingZero$};
\node at (8.5,0.5) {$\cdots$};
\drawblack{-4.2,1}{-3.3,1}{}{e}
\drawblack{0.8,0.9}{2,0.1}{sloped}{D^k}
\drawblack{0.8,1}{1.7,1}{}{e}
\drawblack{3.1,0}{4.6,0}{}{S}
\drawblack{3.3,0.9}{4.6,0.1}{sloped}{-SD^{k-1}}
\drawblack{5.4,0}{7,0}{}{c}
\drawblack{5.8,1}{6.7,1}{}{e}
\drawblack{5.8,0.9}{7,0.1}{sloped}{SD^{k-1}}
\end{tikzpicture}

\begin{tikzpicture}[scale = 0.9, transform shape]
\node at (-0.8,0.5) {$\cdots$};
\node at (0,0) {$q^{6k-10}\smoothingOne$};
\node at (0,1) {$q^{6k-11}\smoothingZero$};
\node at (2.5,0) {$q^{6k-8}\smoothingOne$};
\node at (2.5,1) {$q^{6k-9}\smoothingZero$};
\node at (5,-1) {$q^{6k-7}\smoothingZero$};
\node at (5,0) {$q^{6k-6}\smoothingOne$};
\node at (5,1) {$q^{6k-7}\smoothingZero$};
\node at (7.5,-1) {$q^{6k-5}\smoothingZero$};
\node at (7.5,0) {$q^{6k-6}\smoothingOne$};
\node at (7.5,1) {$q^{6k-5}\smoothingZero$};
\node at (10,0) {$q^{6k-4}\smoothingOne$};
\node at (10,1) {$q^{6k-3}\smoothingZero$};
\node at (12.5,0) {$q^{6k-2}\smoothingOne$};
\node at (15,0) {$q^{6k}\smoothingOne$};
\drawblack{0.8,0}{1.8,0}{}{d}
\drawblack{0.8,0.9}{1.8,0.1}{sloped}{-SD}
\drawblack{3.2,0.8}{4.3,-0.8}{sloped, near start}{D}
\drawblackw{3.2,0}{4.3,0}{near end}{c}
\drawblack{3.2,1}{4.3,1}{}{e}
\drawblack{5.7,1}{6.8,0.1}{sloped}{-S}
\drawblack{5.7,0.8}{6.8,-0.8}{sloped, near start}{-D}
\drawblackw{5.7,-0.9}{6.8,-0.1}{sloped,near start}{S}
\drawblack{5.7,-1}{6.8,-1}{}{e}
\drawblackw{5.7,-0.1}{6.8,-0.9}{sloped, near start}{S}
\drawblack{8.2,0}{9.3,0}{}{c}
\drawblack{8.2,1}{9.3,1}{}{e}
\drawblack{8.2,0.9}{9.3,0.1}{sloped}{S}
\drawblack{10.7,0.9}{11.8,0.1}{sloped}{-S}
\drawblack{10.7,0}{11.8,0}{}{d}
\drawblack{13.2,0}{14.4,0}{}{c}
\draw[dashed] (-0.8,1.5) -- (15,1.5);
\draw[dashed] (-0.8,-1.5) -- (15, -1.5);
\end{tikzpicture}

\begin{tikzpicture}
\node at (0,0) {$e=2\,\smoothingZero$};
\node[scale = 0.7] at (0.35,0) {$\bullet$};
\node at (3,0) {$d= \smoothingOne + \smoothingOne$};
\node at (6,0) {$c = \smoothingOne - \smoothingOne$};
\node[scale = 0.7] at (2.95,-0.09) {$\bullet$};
\node[scale = 0.7] at (3.7,0.09) {$\bullet$};
\node[scale = 0.7] at (5.95,-0.09) {$\bullet$};
\node[scale = 0.7] at (6.7,0.09) {$\bullet$};
\end{tikzpicture}
\end{center}
\caption{\label{fig:big_complex}The cochain complex $\mathcal{B}_k$ over $\Cobl^\Z(B^2_2)$. The object with grading shift $q^{6k}$ is in homological degree $4k$, and the object with grading shift $q^{-1}$ is in homological degree $0$. The complex continues in negative homological degrees via the two objects connected by the $e$-morphism. The letter $S$ stands for a surgery between the two smoothings, and $D$ stands for back-and-forth surgeries.}
\end{figure}

Now consider the functor $G\colon \Cobl^\Z(B^2_2)\to \mathfrak{Mod}^q_A$ obtained by connecting the two endpoints on the right and identifying $\Cobl^\Z(B^1_1)$ with finitely generated, free $q$-graded modules over $A=\Z[X]/(X^2)$. Then $G(c) = 0$, so $G(\mathcal{B}_k)$ decomposes into a direct sum of relatively small cochain complexes. These cochain complexes have been analysed in detail in the proof of \cite[Thm.7.4]{schuetz2025kh3braids}. In particular, $G(\mathcal{B}_k)$ is chain homotopy equivalent to the complex $\mathcal{C}_k$ given in Figure \ref{fig:bet_complex}. Moreover, $G(q^{l-6k}\llbracket T_{k,l}\rrbracket)$ is chain homotopy equivalent to the sub-complex of $\mathcal{C}_k$ obtained by removing all gray objects of homological degree less than $4k-l$.

\begin{figure}[ht]
\begin{center}
\begin{tikzpicture}
\node at (0,1.5) {$\cdots$};
\node[color=gray] at (1, 2) {$q^{-2k-4}A$};
\node[color=gray] at (1,1) {$q^{-2k-6}A$};
\node[color=gray] at (4,2) {$q^{-2k-2}A$};
\node[color=gray] at (4,1) {$q^{-2k-4}A$};
\node[color=gray] at (7,2) {$q^{-2k}A$};
\node[color=gray] at (7,1) {$q^{-2k-2}A$};
\node[color=gray] at (10,2) {$q^{-2k+2}A$};
\node[color=gray] at (10,1) {$q^{-2k}A$};
\node at (10,0) {$u^0q^{-2}A$};
\node at (12, 0) {$0$};
\node at (12.5,1) {$\cdots$};
\draw[color = gray, ->] (1.8,2) -- node [above, scale = 0.7] {$2X$} (3.2,2);
\draw[color = gray, ->] (1.8,1) -- node [above, scale = 0.7] {$2X$} (3.2,1);
\draw[color = gray, ->] (7.8,1.9) -- node [above, sloped, scale = 0.7, very near end] {$(2X)^{k-1}$} (9.4, 0.2);
\draw[color = gray, ->] (7.8, 0.9) -- node [above, sloped, scale = 0.7] {$X^k$} (9.4, 0.1);
\draw[-, line width=6pt, color = white] (7.8,1) -- (9.2,1);
\draw[color = gray, ->] (7.8,2) -- node [above, scale = 0.7] {$2X$} (9.2,2);
\draw[color = gray, ->] (7.8,1) -- node [above, scale = 0.7] {$2X$} (9.2,1);
\end{tikzpicture}

\begin{tikzpicture}
\node at (-0.2,1) {$\cdots$};
\node[color = gray] at (1,2) {$q^{6k-4-8i}A$};
\node[color = gray] at (1,1) {$q^{6k-6-8i}A$};
\node[color = gray] at (4,2) {$q^{6k-2-8i}A$};
\node[color = gray] at (4,1) {$q^{6k-4-8i}A$};
\node at (4,0) {$q^{6k-4-6i}A$};
\node at (2.5,0) {$u^{4(k-i)-2}$};
\node[color = gray] at (7,2) {$q^{6k-8i}A$};
\node[color = gray] at (7,1) {$q^{6k-2-8i}A$};
\node at (7,0) {$q^{6k-2-6i}A$};
\node[color = gray] at (10,2) {$q^{6k+2-8i}A$};
\node[color = gray] at (10,1) {$q^{6k-8i}A$};
\node at (10,0) {$q^{6k-2-6i}A$};
\node at (13,0) {$q^{6k+2-6i}A$};
\node at (14,1) {$\cdots$};
\node at (13, 2) {$1\leq i \leq k-1$};
\draw[->, color = gray] (1.9, 1.9) -- node [sloped, scale = 0.7, above, very near end] {$(2X)^i$} (3.1, 0.1);
\draw[->, color = gray] (1.9, 2) -- node [scale = 0.7, above] {$2X$} (3.1, 2);
\draw[-, line width = 6pt, color = white] (1.9, 1) -- (3.1, 1);
\draw[->, color = gray] (1.9, 1) -- node [above, scale = 0.7] {$2X$}(3.1,1);
\draw[->, color = gray] (4.9, 1.9) -- node [sloped, scale = 0.7, above] {$(2X)^i$} (6.1, 0.1);
\draw[->] (4.9, 0) -- node [above, scale = 0.7] {$2X$} (6.1, 0);
\draw[->, color = gray] (7.8, 1.9) -- node [sloped, scale = 0.7, above, very near end] {$(2X)^{i-1}$} (9.2, 0.2);
\draw[->, color = gray] (7.8,0.9) -- node [sloped, scale = 0.7, above] {$X^i$} (9.2,0.1);
\draw[->, color = gray] (7.8, 2) -- node [above, scale = 0.7] {$2X$} (9.2,2);
\draw[-, line width=6pt, color = white] (7.8, 1) -- (9.2,1);
\draw[->, color = gray] (7.8,1) -- node[above, scale = 0.7] {$2X$} (9.2,1);
\draw[->, color = gray] (10.9, 1.9) -- node [sloped, scale = 0.7, above] {$(2X)^i$} (12.1, 0.1);
\draw[->] (10.9, 0) -- node [above, scale = 0.7] {$0$} (12.1,0);
\draw[dashed] (-0.2, -0.5) -- (14, -0.5);
\draw[dashed] (-0.2, 2.5) -- (14, 2.5);
\end{tikzpicture}

\begin{tikzpicture}
\node at (0,1) {$\cdots$};
\node[color = gray] at (1,2) {$q^{6k-4}A$};
\node[color = gray] at (1,1) {$q^{6k-6}A$};
\node[color = gray] at (4,2) {$q^{6k-2}A$};
\node[color = gray] at (4,1) {$q^{6k-4}A$};
\node at (4,0) {$q^{6k-4}A$};
\node at (7,0) {$q^{6k-2}A$};
\node at (10,0) {$u^{4k}q^{6k}A$};
\draw[->, color = gray] (1.8, 1.9) -- node [above, sloped, scale = 0.7, near end] {$\id$} (3.2, 0.2);
\draw[->, color = gray] (1.8, 0.9) -- node [above, sloped, scale = 0.7] {$X$} (3.2, 0.1);
\draw[->, color = gray] (1.8, 2) -- node [above, scale = 0.7] {$2X$} (3.2, 2);
\draw[-, line width = 6pt, color = white] (1.8,1) -- (3.2, 1);
\draw[->, color = gray] (1.8, 1) -- node [above, scale = 0.7] {$2X$} (3.2, 1);
\draw[->, color = gray] (4.8, 1.9) -- node [above, sloped, scale = 0.7] {$-\id$} (6.2, 0.2);
\draw[->, color = gray] (4.8, 0.9) -- node [above, sloped, scale = 0.7] {$-X$} (6.2, 0.1);
\draw[->] (4.8, 0) -- node [above, scale = 0.7] {$2X$} (6.2, 0);
\draw[->] (7.8, 0) -- node [above, scale = 0.7] {$0$} (9.2,0);
\end{tikzpicture}
\end{center}
\caption{\label{fig:bet_complex}The cochain complex $\mathcal{C}_k$. Notice that $(2X)^i$ is $0$ for $i\geq 2$, and $1$ for $i=0$. Homological degrees are indicated by $u^i$ for some modules.}
\end{figure}

To prove Theorem \ref{thm:case_omega_4} we only need to analyse the various sub-complexes of $\mathcal{C}_k$ depending on $l$. First observe that $\mathcal{C}_k$ contains identity morphisms between homological degrees $4k-2$ and $4k-1$, between $4k-3$ and $4k-2$, and between $4k-5$ and $4k-4$ (coming from the $(2X)^{i-1}$ with $i=1$). To have these identities in the sub-complexes, we additionally need $l\geq 2$ for the first, $l\geq 3$ for the second, and $l\geq 5$ for the third identity. If we can cancel, the morphisms labelled $X$ will disappear, and if we cannot cancel, the morphisms labelled $X$ are not present in the sub-complex.

Furthermore, the diagonal morphisms labelled $(2X)^i$ in the middle part of Figure \ref{fig:bet_complex} are only non-zero if $i=1$, and even if $i=1$ we can remove them with a change of basis. This implies that up to isomorphism the complex $\mathcal{C}_k$ and its relevant sub-complexes are isomorphic to a direct sum of complexes of the form $u^iq^jA\stackrel{2X}{\longrightarrow}u^{i+1}q^{j+2}A$ and $q^jA$. We refer to the former complex as a {\em knight}, and note its homology is given by $u^iq^{j-1}\Z\oplus u^{i+1}q^{j+1}\Z/2\Z\oplus u^{i+1}q^{j+3}\Z$. The latter complex is called a {\em pawn}.

The gray knight complexes resemble the Khovanov complexes of the split union of $T(2,-l)$ with an unknot, while the black knight and pawn complexes give the Khovanov homology of $T(3,3k)$, except in homological degrees near $4k$. Therefore we get in homological degrees $i\leq 4k-6$ the direct sums of Khovanov homologies for $T(3,3k)$ and the split union of $T(2,l)$ with an unknot up to grading shifts.

For the homological degrees between $4k-5$ and $4k$ we need to check various cases, since there is some interaction between the gray and black parts. In particular, the value of $l$ determines which cancellations we can do. In addition to distinguishing between $k=1$ and $k\geq 2$, we need to consider $l \in \{1,2,3,4\}$ and $l\geq 5$. While there are several cases to check, they are straightforward and left to the reader.

\subsection{Proof of Theorem \ref{thm:case_omega_6}}
Let $w = \sigma_1^{-p_1}\sigma_2^{q_1}\cdots \sigma_1^{-p_r}\sigma_2^{q_r}$ with positive integers $r, p_1,q_1,\ldots, p_r, q_r$. By \cite[Thm.9.2]{schuetz2025kh3braids} we have that
\[
\CKh(L_{\Delta^{2k}w}) \simeq B_k\oplus D_w,
\]
where $B_k$ is a direct sum of knight and pawn complexes concentrated in homological degress less than $4k-4$, whose homology satisfies
\[
H^{i,j-t}(B_k) \cong \HKh^{i,j}(T(3,3k))
\]
for $i\leq 4k-5$ and $t=q_1+\cdots +q_r - (p_1 + \cdots + p_r)$, and $D_w$ fits into a short exact sequence of $A$-cochain complexes
\[
0\longrightarrow T \longrightarrow D_w \longrightarrow u^{4k}q^{12k}C_w \longrightarrow 0,
\]
where $C_w\oplus q^t A\simeq \CKh(L_w)$ as $A$-cochain complexes, and
\[
T \cong \left\{ \begin{array}{cc}
u^{4k-4}q^{12k+t-8}A\oplus u^{4k-3}q^{12k+t-4}A\oplus u^{4k-2}q^{12k+t-4}K \oplus u^{4k}q^{12k+t}K & k\geq 2 \\
q^{t+4}A\oplus u^2q^{t+8}K\oplus u^4q^{t+12}K & k=1
\end{array}
\right.
\]
Here $K = (A \stackrel{2X}{\longrightarrow}uq^2A)$ is a knight complex. From this it follows directly that
\begin{equation}\label{eq:nice_sum}
\HKh^{i,j}(L_{\Delta^{2k}w}) \cong \HKh^{i,j-t}(T(3,3k))\oplus \HKh^{i-4k,j-12k}(L_w)
\end{equation}
for $i\leq 4k-5$ and $i\geq 4k+2$. It remains to check that this also holds for $i\leq 4k-1$, and for $i\in \{4k,4k+1\}$ we get the formulas as in the statement of Theorem \ref{thm:case_omega_6}.

Let us write $s=12k+t$, and consider the long exact sequence
\begin{equation}\label{eq:les_omega6}
\cdots \longrightarrow \tilde{H}^{4k+i-1,s+2i-2}(D_w)\stackrel{\delta}{\longrightarrow} \tilde{H}^{4k+i,s+2i}(D_w)\longrightarrow H^{4k+i,s+2i-1}(D_w) \longrightarrow \tilde{H}^{4k+i,s+2i-2}(D_w)
\end{equation}
Here, $\tilde{H}$ refers to the homology of the complex $D_w\otimes_A\Z$, where $X$ acts as $0$ on $\Z$. For $i\geq -5$ we have
\begin{align*}
\tilde{H}^{4k+i,s+2i}(D_w) &\cong \widetilde{\HKh}{}^{4k+i,s+2i}(L_{\Delta^{2k}w})\cong \tilde{H}^{i,t+2i}(C_w)\oplus \tilde{H}^{4k+i,s+2i}(T)\\
\tilde{H}^{4k+i,s+2i-2}(D_w) &\cong \widetilde{\HKh}{}^{4k+i,s+2i-2}(L_{\Delta^{2k}w})\cong 0,
\end{align*}
with $\tilde{H}^{i,j}(C_w) = \widetilde{\HKh}{}^{i,j}(L_w)$ for $(i,j)\not=(0,t)$, and $H^{0,t}(C_w)\oplus \Z\cong \widetilde{\HKh}{}^{0,t}(L_w)$ by \cite[Cor.9.3]{schuetz2025kh3braids}.
Furthermore, $\delta$ is the connecting homomorphism of the reduced/unreduced long exact sequence in Khovanov homology of $L_{\Delta^{2k}w}$, which by \cite[Lm.5.6]{MR4873797} is twice the differential of the first page of the reduced integral BLT spectral sequence.

Notice that $\tilde{H}^{4k+i,s+2i}(T)\cong \Z$ for $i=-4, -2,-1,0,1$. We also have $\tilde{H}^{4k-3,s-6}(T) = 0$, and if $k\geq 2$ we have
\[
\tilde{H}^{4k-3,s-4}(T) \cong \Z \cong \widetilde{\HKh}{}^{4k-3,s-4}(L_{\Delta^{2k}w}).
\]
In the reduced integral BLT spectral sequence the differential on the first page induces isomorphisms $\tilde{H}^{4k+i,s+2i}(T)\cong \tilde{H}^{4k+i+1,s+2i+2}(T)$ for $i=-2, 0$, while $\Z\cong \tilde{H}^{4k-4, s-8}(T)$ survives to the $E_2$-page. The long exact sequence (\ref{eq:les_omega6}) with $i=-4$ then turns into
\[
\widetilde{\HKh}{}^{-5,t-10}(L_w)\stackrel{\delta}{\longrightarrow}\widetilde{\HKh}{}^{-4,t-8}(L_w)\oplus \Z \longrightarrow \HKh^{4k-4,s-9}(L_{\Delta^{2k}w})\longrightarrow 0
\]
and the summand of $\Z$ injects into $\HKh^{4k-4,s-9}(L_{\Delta^{2k}w})$. In particular,
\[
\HKh^{4k-4,s-9}(L_{\Delta^{2k}w})\cong \HKh^{-4,t-9}(L_w)\oplus \Z \cong \HKh^{-4,t-9}(L_w)\oplus \HKh^{4k-4, 12k-9}(T(3,3k)).
\]
The exact same argument works for $i=-2$, thus establishing (\ref{eq:nice_sum}) for $(4k-2, s-5)$, and for $i=0$ we get
\[
\HKh^{4k,s-1}(L_{\Delta^{2k}w}) \cong H^{0,t-1}(C_w)\oplus \Z,
\]
but as was noted above, $H^{0,t-1}(C_w)\oplus \Z\cong \HKh^{0,t-1}(L_w)$.

For $i=-3$ the long exact sequence (\ref{eq:les_omega6}) is
\[
\widetilde{\HKh}{}^{-4,t-8}(L_w)\oplus \Z\stackrel{\delta}{\longrightarrow}\widetilde{\HKh}{}^{-3,t-6}(L_w)\longrightarrow \HKh^{4k-3,s-7}(L_{\Delta^{2k}w})\longrightarrow 0,
\]
with the $\Z$-summand in the kernel of $\delta$. In particular, (\ref{eq:nice_sum}) holds for $(i,j) = (4k-3, s-7)$ as the torus link does not contribute to this bigrading.

For $i=-1$ the sequence (\ref{eq:les_omega6}) is
\[
\begin{tikzpicture}
\node at (0,1.5) {$\widetilde{\HKh}{}^{4k-2,s-4}(L_{\Delta^{2k}w})$};
\node at (0,0) {$\widetilde{\HKh}{}^{-2,t-4}(L_w)\oplus \Z$};
\node at (4.5,1.5) {$\widetilde{\HKh}{}^{4k-1,s-2}(L_{\Delta^{2k}w})$};
\node at (4.5,0) {$\widetilde{\HKh}{}^{-1,t-2}(L_w)\oplus \Z$};
\node at (9,1.5) {$\HKh^{4k-1,s-3}(L_{\Delta^{2k}w})$};
\node at (9,0) {$\HKh^{4k-1,s-3}(L_{\Delta^{2k}w})$};
\node at (12,1.5) {$0$};
\node at (12,0) {$0$};
\draw[->] (1.7,1.5) -- node [above, scale = 0.7] {$\delta$} (2.9,1.5);
\draw[->] (1.7,0) -- node [above, scale = 0.7] {$\delta$} (2.9,0);
\draw[->] (6.2, 1.5) -- (7.4, 1.5);
\draw[->] (6.2,0) -- (7.4,0);
\draw[->] (10.7, 1.5) -- (11.8, 1.5);
\draw[->] (10.7, 0) -- (11.8, 0);
\draw[->] (0,1.2) -- node [right, scale = 0.7] {$\cong$} (0, 0.3);
\draw[->] (4.5,1.2) -- node [right, scale = 0.7] {$\cong$} (4.5, 0.3);
\draw[->] (9,1.2) -- node [right, scale = 0.7] {$\cong$} (9, 0.3);
\end{tikzpicture}
\]
Recall that $\delta$ is twice the differential of the first page of the reduced integral BLT-spectral sequence, which cancels the two $\Z$-summands. It follows that
\[
\HKh^{4k-1,s-3}(L_{\Delta^{2k}w}) \cong \HKh^{-1,t-3}(L_w)\oplus \Z/2\Z,
\]
with $\Z/2\Z \cong \HKh^{4k-1, 12k-3}(T(3,3k))$.

The case $i=1$ is very similar. The main difference is that $\widetilde{\HKh}{}^{4k,s}(L_{\Delta^{2k}w})\cong \tilde{H}^{0,t}(C_w)\oplus \Z$. Recall that $\tilde{H}^{0,t}(C_w)\oplus \Z\cong \widetilde{\HKh}{}^{0,t}(L_w)$, with this summand of $\Z$ in the kernel of $\delta$. We thus also get
\[
\HKh^{4k+1,s+1}(L_{\Delta^{2k}w}) \cong \HKh^{1,t+1}(L_w)\oplus \Z/2\Z,
\]
which is one of the exceptional cases in Theorem \ref{thm:case_omega_6}. It remains to prove the theorem in bigradings $(4k+i,s+2i+1)$ for $i=-4,\ldots, 1$ and $(4k-3, s-3)$.

In bidegree $(4k-3,s-3)$ only the torus link contributes, and it is $\Z$ if $k\geq 2$ and $0$ if $k=1$. But this is exactly the contribution of $T$ in this bidegree.

Now consider the long exact sequence
\begin{equation}\label{eq:les_plus}
\cdots \longrightarrow \tilde{H}^{4k+i,s+2i+2}(D_w) \longrightarrow H^{4k+i,s+2i+1}(D_w) \longrightarrow \tilde{H}^{4k+i,s+2i}(D_w) \stackrel{\delta}{\longrightarrow} \tilde{H}^{4k+i+1,s+2i+2}(D_w)
\end{equation}
The group $\tilde{H}^{4k+i,s+2i+2}(D_w) = 0$ unless $i=-3$ and $k\geq 2$, in which case it is $\Z$ that injects into $H^{4k-3, s-5}(D_w)$. Then
\[
\tilde{H}^{4k-3,s-6}(D_w) \cong \widetilde{\HKh}{}^{4k-3, s-6}(L_{\Delta^{2k}w}) \cong \widetilde{\HKh}{}^{-3,t-6}(L_w).
\]
The kernel of $\delta$ is the kernel of the first differential in the reduced integral BLT-spectral sequence. Hence
\[
\HKh^{4k-3, s-5}(L_{\Delta^{2k}w}) \cong \Z \oplus \HKh^{-3, t-5}(L_w),
\]
establishing (\ref{eq:nice_sum}) for $(4k-3, s-5)$. Notice that we always have the long exact sequence
\[
0\longrightarrow \HKh^{i,t+2i+1}(L_w) \longrightarrow \widetilde{\HKh}{}^{i,t+2i}(L_w)\stackrel{\delta}{\longrightarrow} \widetilde{\HKh}{}^{i+1,t+2i+2}(L_w)\longrightarrow \cdots
\]
For the other relevant values of $i$ we have
\[
\HKh^{4k+i,s+2i+1}(L_{\Delta^{2k}w}) \cong \ker (\delta\colon \tilde{H}^{4k+i,s+2i}(D_w)\to \tilde{H}^{4k+i+1,s+2i+2}(D_w)).
\]
For $i=-4$ we have $\tilde{H}^{4k-4,s-8}(D_w)\cong \widetilde{\HKh}{}^{-4,t-8}(L_w)\oplus \Z$, with $\Z\cong \widetilde{\HKh}{}^{4k-4,12k-8}(T(3,3k))$ in $\ker \delta$. Hence
\[
\HKh^{4k-4,s-7}(L_{\Delta^{2k}w}) \cong \HKh^{-4,t-7}(L_w)\oplus \Z \cong \HKh^{-4,t-7}(L_w)\oplus \HKh^{4k-4,12k-8}(T(3,3k)).
\]
The same argument applies to $i=-1$, establishing (\ref{eq:nice_sum}) for $(4k-1, s-1)$.

For $i=-2$ we have 
\[
\tilde{H}^{4k-2,s-4}(D_w)\cong \widetilde{\HKh}{}^{-2,t-4}(L_w)\oplus \Z.
\]
This time $\Z\cong \widetilde{\HKh}{}^{4k-2,12k-4}(T(3,3k))$ is not in $\ker\delta$. But since $\HKh^{4k-2,12k-3}(T(3,3k))=0$ we still get (\ref{eq:nice_sum}) in bidegree $(4k-2,s-3)$.

For $i=0$ we have
\[
\tilde{H}^{4k,s}(D_w)\cong \tilde{H}^{0,t}(C_w)\oplus \Z \cong \widetilde{\HKh}{}^{0,t}(L_w),
\]
but $\delta$ restricted to the summand $\Z$ is not $0$ (recall that $\widetilde{\HKh}{}^{0,t}(L_w)$ contains a summand $\Z$ which survives the reduced BLT-spectral sequence, but we have a different behaviour here). It follows that
\[
\HKh^{4k,s+1}(L_{\Delta^{2k}w}) \cong \HKh^{0,t+1}(L_w)/\Z,
\]
as stated in Theorem \ref{thm:case_omega_6}.

For the remaining case $i=1$ observe that $\tilde{H}^{4k+1,s+2}(D_w)$ has an extra summand $\Z$ coming from the homology of $T$, which produces an extra generator in $\ker\delta$. Therefore
\[
\HKh^{4k+1,s+3}(L_{\Delta^{2k}w})\cong \HKh^{1,t+3}(L_w)\oplus \Z,
\]
as claimed.

\section{Extremal Khovanov homology of braids in polynomial time}
In this section we prove  Theorem \ref{thm:Extremal KH for braids} by constructing algorithms $\mathcal A_k$ which work on link diagrams and can be specialized into braid algorithms $\mathcal B_{k,t}$. We start in (\ref{subsec: overview}) by giving an overview of algorithms $\mathcal A_k$ constructing $\mathcal B_{k,t}$ using them. In (\ref{subsec: blocks}) the algorithms $ \mathcal A_k$ are described in more detail and in (\ref{subsec: validity}) and (\ref{subsec: polytime}) their validity and polynomial runtime are proven. 

\subsection{Overview of the algorithm \texorpdfstring{$\mathcal A_{k}$}{Ak}}\label{subsec: overview}

\term{Planar arc diagrams} glue ``smaller" tangles together to generate ``larger" tangles and eventually links. Bar-Natan's theory allows one to formulate local Khovanov complexes of ``smaller" tangles, which themselves can be composed as well with planar arc diagrams and the two ways of making a ``larger" composite complex commute with each other \cite{MR2174270}. Instead of using the general theory,  it suffices for our purposes to introduce unoriented \term{simple planar arc diagrams of Type I, II, III,} and \term{IV}, see Figure \ref{Figure: Planar diagrams}.

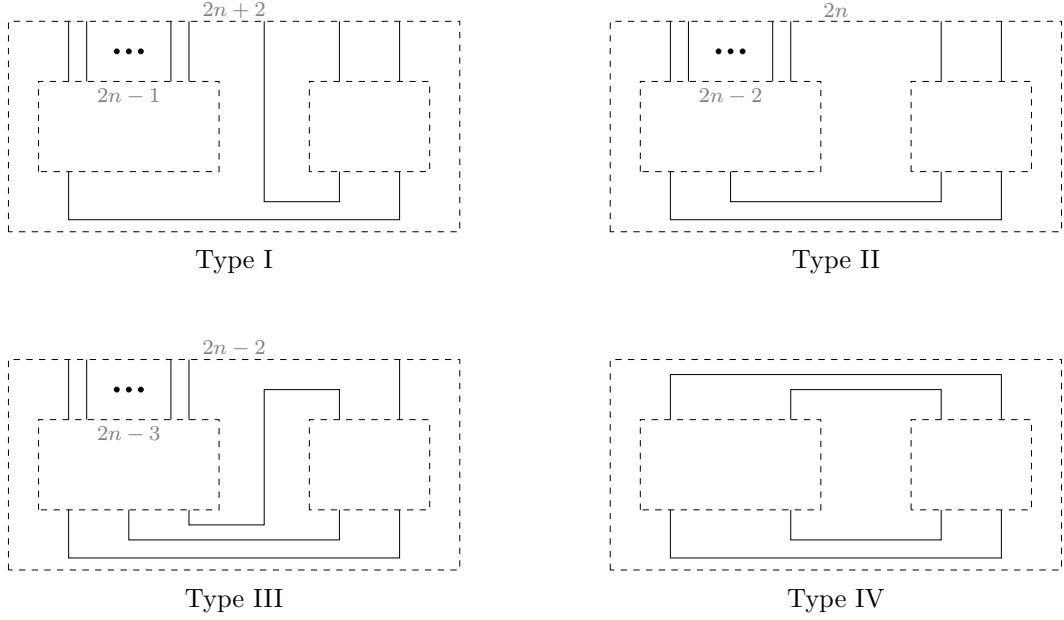
\begin{figure}[ht]
    \centering
    \begin{tikzpicture}

\node[scale=0.8] at (0,0) {\begin{tikzpicture}

\draw[dashed] (-3.5,-2) rectangle (4,1.5);
\draw[dashed]  (-3,-1) rectangle (0,0.5);
\draw[dashed]  (1.5,-1) rectangle (3.5,0.5);

\draw (-2.5,0.5) -- (-2.5,1.5);
\draw (-2.2,0.5) -- (-2.2,1.5);
\draw (-0.8,0.5) -- (-0.8,1.5);

\draw (-0.5,0.5) -- (-0.5,1.5);
\draw (-2.5,-1) -- (-2.5,-1.8) -- (3,-1.8) -- (3,-1);
\draw (0.75,1.5) -- (0.75,-1.5) -- (2,-1.5) -- (2,-1);
\draw (2,1.5) -- (2,0.5);
\draw (3,0.5) -- (3,1.5);

\filldraw (-1.3,1) circle (0.04);    
\filldraw (-1.5,1) circle (0.04);
\filldraw (-1.7,1) circle (0.04);


\end{tikzpicture}};

\node[scale=0.8] at (8,0) {\begin{tikzpicture}

\draw[dashed] (-3.5,-2) rectangle (4,1.5);
\draw[dashed]  (-3,-1) rectangle (0,0.5);
\draw[dashed]  (1.5,-1) rectangle (3.5,0.5);

\draw (-2.5,0.5) -- (-2.5,1.5);
\draw (-2.2,0.5) -- (-2.2,1.5);
\draw (-0.8,0.5) -- (-0.8,1.5);

\draw (-0.5,0.5) -- (-0.5,1.5);
\draw (-2.5,-1) -- (-2.5,-1.8) -- (3,-1.8) -- (3,-1);
\draw (-1.5,-1) -- (-1.5,-1.5) -- (2,-1.5) -- (2,-1);
\draw (2,1.5) -- (2,0.5);
\draw (3,0.5) -- (3,1.5);

\filldraw (-1.3,1) circle (0.04);    
\filldraw (-1.5,1) circle (0.04);
\filldraw (-1.7,1) circle (0.04);


\end{tikzpicture}};

\node[scale=0.8] at (0,-4.5) {\begin{tikzpicture}

\draw[dashed] (-3.5,-2) rectangle (4,1.5);
\draw[dashed]  (-3,-1) rectangle (0,0.5);
\draw[dashed]  (1.5,-1) rectangle (3.5,0.5);

\draw (-2.5,0.5) -- (-2.5,1.5);
\draw (-2.2,0.5) -- (-2.2,1.5);
\draw (-0.8,0.5) -- (-0.8,1.5);

\draw (-0.5,0.5) -- (-0.5,1.5);
\draw (-2.5,-1) -- (-2.5,-1.8) -- (3,-1.8) -- (3,-1);
\draw (-1.5,-1) -- (-1.5,-1.5) -- (2,-1.5) -- (2,-1);
\draw (-0.5,-1) -- (-0.5,-1.25) -- (0.75,-1.25) -- (0.75,1) -- (2,1) -- (2,0.5);
\draw (3,0.5) -- (3,1.5);

\filldraw (-1.3,1) circle (0.04);    
\filldraw (-1.5,1) circle (0.04);
\filldraw (-1.7,1) circle (0.04);


\end{tikzpicture}};

\node[scale=0.8] at (8,-4.5) {\begin{tikzpicture}

\draw[dashed] (-3.5,-2) rectangle (4,1.5);
\draw[dashed]  (-3,-1) rectangle (0,0.5);
\draw[dashed]  (1.5,-1) rectangle (3.5,0.5);

\draw (-2.5,0.5) -- (-2.5,1.25) -- (3,1.25) -- (3,0.5);

\draw (-2.5,-1) -- (-2.5,-1.8) -- (3,-1.8) -- (3,-1);
\draw (-0.5,-1) -- (-0.5,-1.5) -- (2,-1.5) -- (2,-1);
\draw (-0.5,0.5) -- (-0.5,1) -- (2,1) -- (2,0.5);

\end{tikzpicture}};

\node[gray] at (-1.4,0.4) {\footnotesize $2n-1$}; 
\node[gray] at (-1.4+8,0.4) {\footnotesize $2n-2$}; 
\node[gray] at (-1.4,0.4-4.5) {\footnotesize $2n-3$}; 

\node[gray] at (0,1.55) {\footnotesize $2n+2$}; 
\node[gray] at (0+8,1.55) {\footnotesize $2n$}; 
\node[gray] at (0,1.55-4.5) {\footnotesize $2n-2$}; 

\node at (0,-1.8) {Type I};
\node at (0+8,-1.8) {Type II};
\node at (0,-1.8-4.5) {Type III};
\node at (8,-1.8-4.5) {Type IV};

\end{tikzpicture}
    \caption{The four different families of simple planar arc diagrams which we will use. Type I, II and III simple planar arc diagrams are determined by $n\geq 1$ and Type IV is unique. The planar arc diagrams are considered up to boundary preserving planar isotopy.}
    \label{Figure: Planar diagrams}
\end{figure}
\term{A nice scanning sequence} $\mathcal S$ of a link diagram $L$ with $n$ crossings is a triple of sequences 
$$\mathcal{S}=((T_1,\dots T_n),(c_1,\dots, c_n), (P_1,\dots, P_{n-1}))$$ 
where $T_i$ are unoriented subtangle diagrams of $L$, $c_i$ are unoriented crossings of $L$ and $P_i$ are simple planar arc diagrams of Type I, II or III for $i\leq n-2$ and $P_{n-1}$ is of type IV. Additionally, we require that $T_1=c_1$, $T_n=L$ and $T_i\otimes_{P_i} c_{i+1}=T_{i+1}$ as tangle/link diagrams.
A scanning sequence is also implicitly present in the scanning algorithm of \cite{MR2320156}, but our technical condition of allowing only one Type IV diagram is the extra ingredient which makes our sequences \term{nice}. In Lemma \ref{lemma: every link has nice scan} we prove that for reasonable link diagrams nice scanning sequences always exist. The girth of a nice scanning sequence, denoted by $\operatorname{girth}(\mathcal S)$, is the maximum number of boundary points on all subtangles $T_i$.

\begin{proposition}\label{prop: Extremal KH for bounded girth}
For every $k \geq 0$ there is an algorithm $\mathcal A_{k}$ which takes in a link diagram $L$ and a nice scanning sequence $\mathcal S$ for $L$ and outputs $\operatorname{Kh}^{i, \ast}(L)$  for $i< -n_-(L)+k$  and $i> n_+(L)-k$. The algorithm $\mathcal A_k$ has running time $\mathcal O(f(\operatorname{girth}(\mathcal S)) \cdot p_k(n(L)))$ for some function $f$ and polynomial $p_k$.
\end{proposition}

Theorem \ref{thm:Extremal KH for braids} is obtained as a corollary Proposition \ref{prop: Extremal KH for bounded girth}. 

\begin{proof}[Proof of Theorem \ref{thm:Extremal KH for braids}]
    The algorithm $\mathcal B_{k,t}$ takes in a word $w$ applies the algorithm $\mathcal A_{k+t-1} $ to the pair $(L_b, \mathcal S)$. The link diagram $L_b$ is the braid closure of $b=\sigma_1\dots \sigma_{t-1} w \sigma_{t-1}^{-1} \dots \sigma_1^{-1}$ and $\mathcal S$ scans the crossings of $L_b$ in the order that they occur in $b$. Conjugating $w$ with $\sigma_1 \dots \sigma_{t-1}$ before taking the braid closure does not alter the isotopy type of the link but it will ensure that $\mathcal{S}$ is a nice scanning sequence with $\operatorname{girth}(\mathcal S)=2t$. The conjugation also adds $t-1$ positive and negative crossings, which is why we need to apply $\mathcal A_{k+t-1}$ to $L_b$ in order to obtain $\operatorname{Kh}^{i,\ast}(L_w)=\operatorname{Kh}^{i,\ast}(L_b)$ in homological degrees $i\leq -n_-(L_w)+k=-n_-(L_b)+k+t-1$  and $i\geq  n_+(L_w)-k=-n_+(L_b)-k-t+1$.    
\end{proof}

To prove Proposition \ref{prop: Extremal KH for bounded girth} it suffices to describe an algorithm which obtains $\operatorname{Kh}^{i, \ast}(L)$  for $i< -n_-(L)+k$. The degrees $i> n_+(L)-k$ can be obtained by applying the same algorithm to the mirrored diagram $D^!$ and using the fact that Khovanov homology of the mirror link is obtained from the dual complex.
In order to minimize the trouble of grading shifts, we will make use of unshifted complexes and delay the shifts to the very end. Thus, for a crossing $c\in \{ \crosPos, \crosNeg \}$ we denote $\llbracket c \rrbracket_{\operatorname{US}}$ as the \term{unshifted Bar-Natan complex of $c$}, so that 
\begin{align*}
\llbracket \crosPos \rrbracket_{\operatorname{US}}&= \quad \cdots \longrightarrow 0 \longrightarrow u^0q^0 \,  \smoothingZero \ \xrightarrow{\text{saddle}} \  u^1q^1 \, \smoothingOne \ \longrightarrow 0 \longrightarrow \cdots \\
\llbracket \crosNeg \rrbracket_{\operatorname{US}}&= \quad  \cdots \longrightarrow 0 \longrightarrow u^0q^0 \,  \smoothingOne \ \xrightarrow{\text{saddle}}  \  u^1q^1 \, \smoothingZero \ \longrightarrow 0 \longrightarrow \cdots.
\end{align*}
and more generally for a tangle diagram $T$ we write $\llbracket T \rrbracket_{\operatorname{US}}= u^{n_-(L)} q^{-n_+(L)+2n_-(L)}\llbracket T \rrbracket$.

The algorithms $\mathcal{A}_{k}$ are roughly described by pseudocode in Algorithm \ref{Algorithm: extremal lines over Z} and in Subsection \ref{subsec: blocks}, we will express the precise meaning of Lines \ref{alg_line: deloop and keep block decomp} and \ref{alg_line: gaussian eliminate blocks} whose point that we do not eliminate ``too many" isomorphisms. Limiting the number of cancellations allows us to keep the lengths of integers in check, which ensures that we do not spend exponential time multiplying integers of superpolynomial bit-length.

\begin{algorithm}
\caption{For any $k$, the following describes $\mathcal A_{k}$ for the lowest degrees and with $\mathbb Z$-coefficients.} \label{Algorithm: extremal lines over Z} 
 \textbf{Input} Link diagram $L$ with a nice scanning sequence $((T_1,\dots T_{n(L)}),(c_1,\dots, c_{n(L)}), (P_1,\dots, P_{n(L)-1}))$.   \\
  \textbf{Output}   $\operatorname{Kh}^{i,\ast} (L) \text{ for } i\leq k-n_-(L) $
   \begin{algorithmic}[1]
\State Assign $C \gets \llbracket c_1 \rrbracket_{\operatorname{US}} $ \label{alg_line: initialize}
\For{$t=1,\dots, n(L)-1$}
    \State Assign $C' \gets C \otimes_{P_t}\llbracket c_{t+1} \rrbracket_{\operatorname{US}} $ \label{alg_line: compose}
    \State Deloop the newly formed circles in $C'$ and fix a block decomposition. Call the result $C''$. \label{alg_line: deloop and keep block decomp}
    \State Gaussian eliminate certain blocks of $C''$ by using the saddle of $\llbracket c_{j+1}  \rrbracket_{\operatorname{US}}$. Call the result  $C'''$. \label{alg_line: gaussian eliminate blocks}  
    \State Truncate $C'''$ at $k+1$, that is, set $(C''')^{k+2}=0$. Call the result $C$. \label{alg_line: truncation for Z}
\EndFor
\State On $C$ replace every $\emptyset$ with $\mathbb Z$. (Or to put it fancily, apply the Khovanov TQFT functor $F$.) \label{alg_line: TQFT}
\State Put the differentials of $F(C)$ into Smith normal form. \label{alg_line: homology}
\State Read of the homology from the Smith normal form and shift it with $u^{-n_-(L)} q^{n_+(L)-2n_-(L)}$.
\end{algorithmic}
\end{algorithm}
Over finite fields, one does not need to restrain the number of cancellations and pseudocode in Algorithm \ref{Algorithm: extremal Kh over Fp} presents simpler algorithms $\mathbb F_p \mathcal A_{k}$ which compute the homology with $\mathbb F_p$ coefficients. Removing  Line \ref{alg_line: truncation for Fp} from the Algorithms $\mathbb F_p \mathcal A_{k}$  gives precisely Bar-Natan's scanning algorithm \cite{MR2320156} which computes the whole Khovanov homology table $\operatorname{Kh}^{\ast,\ast}(L; \mathbb F_p)$. 

\begin{algorithm}
\caption{For any $k$ the following describes $\mathbb F_p \mathcal A_{k}$ for the lowest degrees and with $\mathbb F_p$ coefficients.} 
\label{Algorithm: extremal Kh over Fp}
  \textbf{Input} Link diagram $L$ with a nice scanning sequence $((T_1,\dots T_{n(L)}),(c_1,\dots, c_{n(L)}), (P_1,\dots, P_{n(L)-1}))$.   \\
  \textbf{Output}   $\operatorname{Kh}^{i,\ast} (L; \mathbb F_p) \text{ for } i\leq k-n_-(L) $
   \begin{algorithmic}[1]
\State Assign $C \gets \llbracket c_1 \rrbracket_{\operatorname{US}} $ 
\For{$t=1,\dots, n(L)-1$}
    \State Assign $C' \gets C \otimes_{P_t}\llbracket c_{t+1} \rrbracket_{\operatorname{US}} $
    \State Deloop the newly formed circles in $C'$. Call the result $C''$
    \State Iteratively Gaussian eliminate isomorphisms of $C''$ until none are left. Call the result $C'''$. 
    \State Truncate $C'''$ at $k+1$, that is, set $(C''')^{k+2}=0$. Call the result $C$. \label{alg_line: truncation for Fp}  
\EndFor
\State Replace every $\emptyset$ with $\mathbb Z$. (Or to put it fancily, apply the Khovanov TQFT functor $F$.) 
\State Read of the homology from $C$ and shift it with $u^{-n_-(L)} q^{n_+(L)-2n_-(L)}$.
\end{algorithmic}
\end{algorithm}

\subsection{Expanding and eliminating blocks in \texorpdfstring{$\mathcal A_{k}$}{Ak}}\label{subsec: blocks}

Every object $B$ in $\Cobl^{\mathbb Z}(2n)$ is isomorphic to a unique direct sum (up to a permutation) 
\begin{equation} \label{eq:decomposition of objects}
B\cong\bigoplus_{i=1}^n q^{c_i}D_i    
\end{equation}
where $D_{i}$ are loopless diagrams, which are planar matchings on $2n$ points, and $c_i\in \mathbb Z$. Using (\ref{eq:decomposition of objects}) any morphism $f\colon A\to B$ of $\Cobl^{\mathbb Z}(2n)$ can be described with a matrix, where each matrix element is a morphism $f_{i,j}\in \operatorname{Hom}(A_j,B_i)$ between some loopless diagrams $A_j$ and $B_i$.

A morphism $g\colon D \to E$ between loopless diagrams is a formal sum of oriented surfaces embedded in $[0,1]^3$ with boundary $X=(D\times \{1\}) \cup (E \times \{0\}) \cup (\partial D \times [0,1])$. Topologically, $X$ is a disjoint union of $\mathbb S^1$:s and we can glue disks to each loop and embed them inside the cube $[0,1]^3$. Up to a boundary preserving diffeomorphism, this generates a unique cobordism $\Sigma_{D,E} \hookrightarrow [0,1]^3$. We define $\operatorname{dec}(D,E)$ to be the set of all possible ways of decorating each component of $\Sigma_{D,E}$ with $0$ or $1$ dots. In other words, $\operatorname{dec}(D,E)$ consists of $2^c$ copies of $\Sigma_{D,E}$ with varying decorations where $c$ is the number of $\mathbb S^1$:s in $X$. 

\begin{lemma}\label{Lemma: basis for Hom:s}
    For loopless planar matchings $D,E \in \Cobl^{\mathbb Z}(2n) $, the set $\operatorname{dec}(D,E)$ freely generates $\operatorname{Hom}(D,E)$.
\end{lemma}
\begin{proof}
    To see that $\operatorname{dec}(D,E)$ generates $\operatorname{Hom}(D,E)$, take any dotted surface $S$ embedded $[0,1]^3$ with the desired boundary. By using the neck-cutting relation, one can get rid of all of the genus and disconnect every boundary component of $S$. Then $S$ is expressed as a sum of unions of dotted spheres and disks. Using the double dot and sphere relations to this sum, we get a description of $S$ as a sum over $\operatorname{dec}(D,E)$.

    Next, let $\sum_{S\in \operatorname{dec}(D,E)} c_S S =\sum_{S\in \operatorname{dec}(D,E)} d_S S $ for some integers $c_S$ and $d_S$. To see that $c_S=d_S$ for a fixed $S$, we describe a map $f_S\colon \operatorname{Hom}(D,E) \to \operatorname{Hom}(\emptyset, \emptyset)$.  The map $f_S$ takes a dotted surface $\Sigma$ in $[0,1]^3$ whose boundary is a disjoint union of $\mathbb S^1$:s in $\partial ([0,1]^3)$ and glues disks to each of those boundary loops. The glued discs are embedded ``outside of the box" and $f_S$ places a dot on each ``outside disk" if the corresponding ``inside disk" has no dot in the surface $S$. Rescaling the generated surface appropriately, we get a surface $f_S(\Sigma) \in \operatorname{Hom}(\emptyset, \emptyset)$. Moreover, this procedure describes a well-defined linear map $f_S\colon \operatorname{Hom}(D,E) \to \operatorname{Hom}(\emptyset, \emptyset)$. By plugging in the original sums and using the sphere and double-dot relations, we obtain
    $$
    c_S E=f_S \left( \sum_{S'\in \operatorname{dec}(D,E)} c_{S'} S' \right)= f_S\left( \sum_{S'\in \operatorname{dec}(D,E)} d_{S'}S' \right) =d_S E
    $$
    where $E$ is the empty cobordism $E\colon \emptyset \to \emptyset$. Since $ \operatorname{Hom}(\emptyset, \emptyset)$ is freely generated by $E$, this concludes the proof.
\end{proof}

In complexity theory, finite fields are often simple to work with since addition and multiplication of two elements can be done in constant time in terms of bit-operations on a computer. Storing an integer $c$ takes $\log_2 |c|$ amount of memory and time which can be too much if $c$ is humongous. Asymptotically optimal algorithms for multiplying integers have been extensively studied in computer science, but for us it will suffice that both addition $c+d$ and multiplication $cd$ can be computed in polynomial time with respect to $\log_2 (|cd|)$. Nevertheless, in order to prove Proposition \ref{prop: Extremal KH for bounded girth} we will need to make sure that the $\log_2(|c|)$ is bounded by a polynomial for all of the integers involved. To this end, we define $\lVert \cdot \rVert$ to be the $l_1$ norm on $\operatorname{Hom}(D,E)$ with respect to the basis $\operatorname{dec}(D,E)$, that is,
$$
\Bigg\lVert \sum_{S\in \operatorname{dec}(D,E)}c_S S \Bigg\rVert = \sum_S |c_S|.
$$
\begin{lemma}\label{lemma: horizontal composition}
    There is an algorithm which takes in loopless planar matchings $A,B,C \in \Cobl^{\mathbb Z}(2n)$ and morphisms $f_1\colon A \to B$, $f_2\colon B \to C$ expressed in the basis as
    $$
    f_1=\sum_{S\in \operatorname{dec}(A,B)}c_S S, \qquad f_2=\sum_{S'\in \operatorname{dec}(B,C)}d_{S'} S'
    $$
    and outputs
    $$
    f_2f_1=\sum_{S''\in \operatorname{dec}(A,C)}e_{S''} S''.
    $$
    For a fixed $n$, the algorithm runs in polynomial time with respect to $\log( \lVert f_2 \rVert \cdot \lVert f_1 \rVert)$ and the composition satisfies $\lVert f_2f_1 \rVert \leq 2^{n-1}  \lVert f_2 \rVert \cdot  \lVert f_1 \lVert $. 
\end{lemma}
\begin{proof}
Our first goal is to isolate the decorations and coefficients of $f_1$ and write
\begin{equation} \label{eq: horicomp f_1}
f_1=\sum_{S\in \operatorname{dec}(A,B)}c_S S=\Sigma_{A,B} \circ \sum_{P\in \operatorname{dec}(A,A)} t_P P
\end{equation}
where the underlying surface $\Sigma_{A,A}$ of $\operatorname{dec}(A,A)$ is the product cobordism $A\times [0,1]$. On each dot decoration $S\in \operatorname{dec}(A,B)$ one has to choose to which components of $A \times [0,1]$ the dots are moved, that is, one has to pick an appropriate decoration $P\in \operatorname{dec}(A,A)$. For these choices $t_P=c_S$ and the rest of the coefficients $t_P$ vanish. This quick procedure (non-uniquely) finds (\ref{eq: horicomp f_1}) and similarly expanding $f_2$ we can obtain
\begin{equation}\label{eq:horicomp f_2f_1}
    f_2f_1= \sum_{Q\in \operatorname{dec}(C,C)} r_Q Q \circ \Sigma_{B,C} \circ \Sigma_{A,B} \circ \sum_{P\in \operatorname{dec}(A,A)} t_P P.
\end{equation}

Next, we will analyze the non-decorated surface $\Sigma_{B,C}\Sigma_{A,B} $. The connected components $\{ \Sigma_i \}_i$ of  $\Sigma_{B,C}\Sigma_{A,B} $ can be obtained directly from the planar matchings $A,B, $ and $C$. Each connected $\Sigma_i$ is obtained by gluing $k$ discs of $\Sigma_{A,B}$ and $l$ discs of $\Sigma_{B,C}$ along $m$ lines yielding $\chi(\Sigma_i)=k+l-m$. By the classification of oriented surfaces with boundary  we also get $\chi(\Sigma_i)=2-r-2g$ where $r$ is the number of boundary loops of $\Sigma_i$ and $g$ is its genus. Since $k,l,m$ and $r$ are obtainable from $A,B$ and $C$ we can compute $g$. By using the neck cutting relation, we can get rid of the genus on $\Sigma_i$ at the cost of adding $g$ dots and a coefficient $2^g$. Further using the neck-cutting relation $r-1$ times we can replace all connections of $\Sigma_i$ with a sum of decorated discs which are glued on the $r$ loops. Applying this simplification to every connected component $\Sigma_i$ gives the expansion
\begin{equation} \label{eq: horicomp Sigmas}
\Sigma_{B,C} \Sigma_{A,B} = \sum_{S''\in \operatorname{dec}(A,C)} b_{S''} S''.     
\end{equation}
Finally, the expansion of $f_2f_1$ can be computed from (\ref{eq:horicomp f_2f_1}) by adding and multiplying the integers $t_P$, $r_Q$, $b_{S''}$ and using the double dot relation.

Since $\Sigma_{A,B}$ and $\Sigma_{B,C}$ contain at least $1$ disk each and $\Sigma_{B,C} \Sigma_{A,B}$ is obtained by gluing them along $n$ lines, we get $\chi(\Sigma_{B,C} \Sigma_{A,B}) \geq 2-n$. On the other hand $\Sigma_{A,C}$ contains at most $n$ disks, so $\chi(\Sigma_{A,C} )\leq n$. Thus  $\chi(\Sigma_{A,C})-\chi(\Sigma_{B,C} \Sigma_{A,B}) \leq 2n-2$ which means that the neck cutting relation can be used at most $n-1$ times when arriving at (\ref{eq: horicomp Sigmas}). Hence $\lVert \Sigma_{B,C}\Sigma_{A,B} \rVert \leq 2^{n-1}$  and we can compute 
\begin{align*}
    \lVert f_2 f_1 \rVert &= \left\lVert \sum_{Q\in \operatorname{dec}(C,C)} r_Q Q \circ \Sigma_{B,C} \circ \Sigma_{A,B} \circ \sum_{P\in \operatorname{dec}(A,A)} t_P P \right\rVert \\
    &\leq \left\lVert \sum_{Q\in \operatorname{dec}(C,C)} r_Q Q \right\rVert \cdot \left\lVert \Sigma_{B,C}\Sigma_{A,B} \right\rVert \cdot \left\lVert \sum_{P\in \operatorname{dec}(A,A)} t_P P \right\rVert \\
    & \leq 2^{n-1}\left \lVert f_2 \right\rVert \cdot \left\lVert f_1 \right\rVert
\end{align*}
concluding the proof.
\end{proof}

Now we can elaborate on what we mean by delooping, block decompositions and cancellations at Lines \ref{alg_line: deloop and keep block decomp} and \ref{alg_line: gaussian eliminate blocks} in Algorithm \ref{Algorithm: extremal lines over Z}. At Line \ref{alg_line: deloop and keep block decomp} the complex $C''$ with a block decomposition and a Morse matching $M$ are formed. Then at Line \ref{alg_line: gaussian eliminate blocks} the Morse complex $M(C'')$ is computed and renamed to be $C'''$.\footnote{Since the matching $M$ will consist of only 0, 1, or 2 block isomorphisms per homological degree, one could avoid the graph theoretic language and simply perform Gaussian elimination on the morphisms of $M$ one-by-one. The result will be identical, but our choice of notation will help us to analyze the result in Lemma \ref{lemma: rank and integer bounds}.}
The concrete block decomposition of $C''$ and the matching $M$ depend on the type of the simple planar arc diagram $P_j$, denoted by $P$ for simplicity. \textbf{Type I} is easy: no new loops are formed, we take $C''$ to have the trivial block decomposition and set $M=\emptyset$. Simply put, if $P$ is a Type I planar arc diagram Lines \ref{alg_line: deloop and keep block decomp} and \ref{alg_line: gaussian eliminate blocks} do nothing.

\textbf{Type II:} By definition, the complex $C'=C \otimes_P \llbracket c \rrbracket_{\operatorname{US}}$ can be written as 
\begin{equation} \label{eq: C'}
    \begin{tikzpicture}
        \node at (0,0) {\begin{tikzcd}
C^i \otimes_P \smoothingZero  \arrow[r,gray] \arrow[rd, "(-1)^i S^i"] & C^{i+1} \otimes_P \smoothingZero  \arrow[r,gray]  \arrow[rd, "(-1)^{i+1}S^{i+1}"] & C^{i+2} \otimes_P \smoothingZero   \\
qC^{i-1} \otimes_P \smoothingOne    \arrow[r,gray]              & qC^i \otimes_P  \smoothingOne   \arrow[r,gray]                  & qC^{i+1} \otimes_P  \smoothingOne
\end{tikzcd}
};
\node at (-4.5,0) {$\cdots$};
\node at (4.5,0) {$\cdots$};
    \end{tikzpicture}
\end{equation}       
where the maps $S^i$ are diagonal matrices of saddles. In \ref{eq: C'} the $q$:s at the bottom row denote the extra grading shifts associated to $\llbracket c \rrbracket_{\operatorname{US}}$ and whereas the homological gradings and the internal gradings of $C^i$ are hidden.  Let us write the $i$:th chain space of $C^i$ as $D^i\oplus E^i$ where $D^i$ is the direct sum of those diagrams which connect the two bottom strands and $E^i$ is the direct sum of the rest:
    $$
    \begin{tikzpicture}

\node at (0,0) {\begin{tikzpicture}

\draw[dashed]  (-3,-1) rectangle (0,0.5);

\draw[line width=0.6mm] (-2.5,0.5) -- (-2.5,0.3);
\draw[line width=0.6mm] (-2.2,0.5) -- (-2.2,0.3);
\draw[line width=0.6mm] (-0.8,0.5) -- (-0.8,0.3);

\draw[line width=0.6mm] (-0.5,0.5) -- (-0.5,0.3);
\draw[line width=0.6mm] (-2.5,-1) to[bend left=80] (-1.5,-1) ;

\filldraw (-1.3,0.35) circle (0.02);    
\filldraw (-1.5,0.35) circle (0.02);
\filldraw (-1.7,0.35) circle (0.02);


\end{tikzpicture}};

\node at (6,0) {\begin{tikzpicture}

\draw[dashed]  (-3,-1) rectangle (0,0.5);

\draw[line width=0.6mm] (-2.5,0.5) -- (-2.5,0.3);
\draw[line width=0.6mm] (-2.2,0.5) -- (-2.2,0.3);
\draw[line width=0.6mm] (-0.8,0.5) -- (-0.8,0.3);

\draw[line width=0.6mm] (-0.5,0.5) -- (-0.5,0.3);
\draw[line width=0.6mm] (-2.5,-1) to (-2.5,-0.8) ;
\draw[line width=0.6mm] (-1.5,-1) to (-1.5,-0.8) ;

\filldraw (-1.3,0.35) circle (0.02);    
\filldraw (-1.5,0.35) circle (0.02);
\filldraw (-1.7,0.35) circle (0.02);


\end{tikzpicture}};

\node[gray] at (0,0.9) {\footnotesize $2n-2$}; 
\node[gray] at (6,0.9) {\footnotesize $2n-2$};

\node at (0,-1) {$D^i$};
\node at (6,-1) {$E^i$};

\end{tikzpicture}
    $$
    The diagrams of $D^i\otimes_P \smoothingOne$ contain a circle, which can be delooped at the cost of duplicating them. 
    More formally, there is an isomorphism $\begin{bmatrix} \mathfrak d_1 \\ \mathfrak d_2 \end{bmatrix} \colon qD^i\otimes_P \smoothingOne \to  q^2\overline{D^{i-1} \otimes_P \smoothingOne } \oplus  \overline{D^{i-1} \otimes_P \smoothingOne }$ where $\overline{ D^i\otimes_P \smoothingOne }$ denotes the direct sum of diagrams of $ D^i\otimes_P \smoothingOne$ with the new circle removed on every diagram, see Figure \ref{Delooping isomorphism} for the local picture. Now decomposing $C'$ and pulling back the differential we find an isomorphic complex $C''$
    $$
    \begin{tikzpicture}
        \node at (0,0) {
\begin{tikzcd}[column sep=2.5cm]
D^i \otimes_P \smoothingZero   \arrow[rdd, "m^i"]  & D^{i+1} \otimes_P \smoothingZero   \arrow["m^{i+1}",rdd]  & D^{i+2} \otimes_P \smoothingZero         \\
E^i \otimes_P \smoothingZero           & E^{i+1} \otimes_P \smoothingZero      & E^{i+2} \otimes_P \smoothingZero         \\
  \overline{D^{i-1} \otimes_P \smoothingOne }                 & \overline{D^{i} \otimes_P \smoothingOne }          &  \overline{D^{i+1} \otimes_P \smoothingOne } \\
q^2 \overline{D^{i-1} \otimes_P \smoothingOne }                & q^2 \overline{D^{i} \otimes_P \smoothingOne }         & q^2 \overline{D^{i+1} \otimes_P \smoothingOne } \\
qE^{i-1} \otimes_P \smoothingOne                     & qE^{i} \otimes_P \smoothingOne              & qE^{i+1} \otimes_P \smoothingOne   
\end{tikzcd}        
        };
\node at (-6.5,0) {$\cdots$};
\node at (6.5,0) {$\cdots$};
    \end{tikzpicture}
    $$
    where the chain space $(C'')^i$ is the direct sum of the 5 objects in the first column. The differentials of $C''$ are represented by $5\times 5$ whose matrix element at $(1,3)$ is $m^i$. Expanding the pullback differential gives $m^i= (-1)^i\mathfrak d_2 (\pi_D \otimes_P \operatorname{id}) (\operatorname{id}\otimes_P s)  (\iota_D \otimes_P \operatorname{id})  $, where $\iota_D$ and $\pi_D$ are the canonical inclusions and projections related to $C^i=D^i \oplus E^i$ and $s$ is a saddle. Hence $m^i$ are isomorphisms, $M=\{m^i \mid 0\leq i \leq n(L)\}$ is a Morse matching and at Line \ref{alg_line: gaussian eliminate blocks} the Morse complex $M(C'')=C'''$ with $C'''\simeq C'$ is computed.

\textbf{Type III:} We decompose $C^i=D^i \oplus E^i \oplus F^i$ where $D^i$ contains the diagrams which connect the two bottom left strands, $E^i$ those which connect the two bottom right strands and $F^i$ those which connects none of them which each other: 
    $$
    \begin{tikzpicture}

\node at (0,0) {\begin{tikzpicture}

\draw[dashed]  (-3,-1) rectangle (0,0.5);

\draw[line width=0.6mm] (-2.5,0.5) -- (-2.5,0.3);
\draw[line width=0.6mm] (-2.2,0.5) -- (-2.2,0.3);
\draw[line width=0.6mm] (-0.8,0.5) -- (-0.8,0.3);

\draw[line width=0.6mm] (-0.5,0.5) -- (-0.5,0.3);
\draw[line width=0.6mm] (-2.5,-1) to[bend left=80] (-1.5,-1) ;

\draw[line width=0.6mm] (-0.5,-1) -- (-0.5,-0.8);

\filldraw (-1.3,0.35) circle (0.02);    
\filldraw (-1.5,0.35) circle (0.02);
\filldraw (-1.7,0.35) circle (0.02);



\end{tikzpicture}};

\node at (4,0) {\begin{tikzpicture}

\draw[dashed]  (-3,-1) rectangle (0,0.5);

\draw[line width=0.6mm] (-2.5,0.5) -- (-2.5,0.3);
\draw[line width=0.6mm] (-2.2,0.5) -- (-2.2,0.3);
\draw[line width=0.6mm] (-0.8,0.5) -- (-0.8,0.3);

\draw[line width=0.6mm] (-0.5,0.5) -- (-0.5,0.3);
\draw[line width=0.6mm] (-1.5,-1) to[bend left=80] (-0.5,-1) ;

\draw[line width=0.6mm] (-2.5,-1) -- (-2.5,-0.8);

\filldraw (-1.3,0.35) circle (0.02);    
\filldraw (-1.5,0.35) circle (0.02);
\filldraw (-1.7,0.35) circle (0.02);



\end{tikzpicture}};

\node at (8,0) {\begin{tikzpicture}

\draw[dashed]  (-3,-1) rectangle (0,0.5);

\draw[line width=0.6mm] (-2.5,0.5) -- (-2.5,0.3);
\draw[line width=0.6mm] (-2.2,0.5) -- (-2.2,0.3);
\draw[line width=0.6mm] (-0.8,0.5) -- (-0.8,0.3);

\draw[line width=0.6mm] (-0.5,0.5) -- (-0.5,0.3);

\draw[line width=0.6mm] (-0.5,-1) -- (-0.5,-0.8);
\draw[line width=0.6mm] (-1.5,-1) -- (-1.5,-0.8);
\draw[line width=0.6mm] (-2.5,-1) -- (-2.5,-0.8);

\filldraw (-1.3,0.35) circle (0.02);    
\filldraw (-1.5,0.35) circle (0.02);
\filldraw (-1.7,0.35) circle (0.02);



\end{tikzpicture}};

\node[gray] at (0,0.9) {\footnotesize $2n-3$}; 
\node[gray] at (4,0.9) {\footnotesize $2n-3$}; 
\node[gray] at (8,0.9) {\footnotesize $2n-3$};

\node at (0,-1) {$D^i$};
\node at (4,-1) {$E^i$};
\node at (8,-1) {$F^i$.};

\end{tikzpicture}
    $$
    As with Type II, the summands of $D^i\otimes_P \smoothingOne$ and $E^i\otimes_P \smoothingZero$ contain circles which can be delooped so that $C'=C \otimes_P \llbracket c \rrbracket_{\operatorname{US}}$ is isomorphic to the complex $C''$:
    $$
    \begin{tikzpicture}
        \node at (0,0) {
\begin{tikzcd}[column sep=2.5cm]
D^i \otimes_P \smoothingZero \arrow[rdddd, "m_1^i"description] \arrow[rdddddd, dashed,  gray!50]     & D^{i+1} \otimes_P \smoothingZero \arrow[rdddd, "m_1^{i+1}"description] \arrow[rdddddd, dashed,  gray!50]& D^{i+2} \otimes_P \smoothingZero        \\
q^{-1} \overline{E^{i} \otimes_P \smoothingZero }                        & q^{-1} \overline{E^{i+1} \otimes_P \smoothingZero }                                    & q^{-1} \overline{E^{i+2} \otimes_P \smoothingZero }    \\
q \overline{E^{i} \otimes_P \smoothingZero } \arrow[rdddd, "m_2^i"description] \arrow[rdd, dashed,  gray!50]& q \overline{E^{i+1} \otimes_P \smoothingZero } \arrow[rdddd, "m_2^{i+1}"description ] \arrow[rdd, dashed,  gray!50]          & q \overline{E^{i+2} \otimes_P \smoothingZero }    \\
F^i \otimes_P \smoothingZero                            & F^{i+1} \otimes_P \smoothingZero                                        & F^{i+2} \otimes_P \smoothingZero        \\
 \overline{D^{i-1} \otimes_P \smoothingOne }                      &  \overline{D^{i} \otimes_P \smoothingOne }                                  &  \overline{D^{i+1} \otimes_P \smoothingOne }  \\
q^2 \overline{D^{i-1} \otimes_P \smoothingOne }                      & q^2 \overline{D^{i} \otimes_P \smoothingOne }                                  & q^2 \overline{D^{i+1} \otimes_P \smoothingOne }  \\
qE^{i-1} \otimes_P \smoothingOne                         & qE^{i} \otimes_P \smoothingOne                                     & qE^{i+1} \otimes_P \smoothingOne     \\
qF^{i-1} \otimes_P \smoothingOne                         & qF^{i} \otimes_P \smoothingOne                                     & qF^{i+1} \otimes_P \smoothingOne.    
\end{tikzcd}
        };
\node at (-7,0) {$\cdots$};
\node at (7,0) {$\cdots$};
    \end{tikzpicture}
    $$
    Again by expanding, one sees that the matrix elements $m_1^i$ and $m_2^i$ are isomorphisms. On the other hand, the dashed gray arrows are zero morphisms since the first tensor component of their expansions contains an inclusion $D^i \hookrightarrow D^i\oplus E^i$ followed by a projection $D^i\oplus E^i \twoheadrightarrow E^i$ (or alternatively $E^i \hookrightarrow D^i\oplus E^i \twoheadrightarrow D^i$). Hence $M=\{m^i_1,m_2^i \mid 0 \leq i \leq n(L)\}$ is a Morse matching, and at Line \ref{alg_line: gaussian eliminate blocks} we can perform Gaussian elimination on all of the morphisms $m_1^i$ and $m_2^i$ to obtain the Morse complex $M(C)=C'''$. 
    
    \textbf{Type IV:} Let us decompose $C^i=D^i \oplus E^i$ where $D^i$ consists of those diagrams with $\smoothingOne$ connectivity and $E^i$ of those with $\smoothingZero$ connectivity. The compositions contain many new circles, but for the moment we only deloop the ones at the bottom of $D^i\otimes_P \smoothingOne$ and the inner circle of $E^i \otimes_P \smoothingZero$. This yields the complex $C''$:
        $$
    \begin{tikzpicture}
        \node at (0,0) {
        
\begin{tikzcd}[column sep=2.5cm]
D^i \otimes_P \smoothingZero  \arrow[rddddd, dashed,  gray!50] \arrow[rddd, "m_1^i" description]  & D^{i+1} \otimes_P \smoothingZero \arrow[rddddd, dashed,  gray!50]  \arrow["m_1^{i+1}" description,rddd]  & D^{i+2} \otimes_P \smoothingZero         \\
q^{-1} \overline{E^{i} \otimes_P \smoothingZero }          & q^{-1} \overline{E^{i+1} \otimes_P \smoothingZero }      & q^{-1} \overline{E^{i+2} \otimes_P \smoothingZero }         \\
q \overline{E^{i} \otimes_P \smoothingZero } \arrow[rd, dashed,  gray!50] \arrow[rddd, "m_2^i" description]         & q \overline{E^{i+1} \otimes_P \smoothingZero } \arrow[rd, dashed,  gray!50]  \arrow[rddd, "m_2^{i+1}" description]   & q \overline{E^{i+2} \otimes_P \smoothingZero }         \\
 \overline{D^{i-1} \otimes_P \smoothingOne }                 & \overline{D^{i} \otimes_P \smoothingOne }          &  \overline{D^{i+1} \otimes_P \smoothingOne } \\
q^2 \overline{D^{i-1} \otimes_P \smoothingOne }                 & q^2 \overline{D^{i} \otimes_P \smoothingOne }          & q^2 \overline{D^{i+1} \otimes_P \smoothingOne } \\
qE^{i-1} \otimes_P \smoothingOne                     & qE^{i} \otimes_P \smoothingOne              & qE^{i+1} \otimes_P \smoothingOne   
\end{tikzcd}

                };
\node at (-7,0) {$\cdots$};
\node at (7,0) {$\cdots$};
    \end{tikzpicture}
    $$
    from we set $M= \{m_1^i, m_2^i \mid 0 \leq i \leq n(L)\}$. The set $M$ is a Morse matching since the gray dashed matrix elements on $C''$ are zero morphisms. To finish off with a simple complex, we deloop the rest of the circles of $M(C)$ before calling the result $C'''$. (To more accurately follow the pseudocode of Algorithm \ref{Algorithm: extremal lines over Z} we could have delooped everything first before cancellation, but our equivalent two-step delooping process kept the notation lighter.)

\subsection{Validity}\label{subsec: validity}

\term{The truncation} of a chain complex $(C,d)$ at $k\in \mathbb Z$, denoted by $\tau_{\leq k} (C)$, is the chain complex with differential $\partial$ defined as
$$
\tau_{\leq k}(C)^i=
\begin{cases}
      C^i, & \text{if}\ i\leq k \\
      0, & \text{otherwise}
    \end{cases}
\qquad
\partial^i=
\begin{cases}
      d^i, & \text{if}\ i< k  \\
      0, &  \text{otherwise}.
    \end{cases}
$$
With this notation, Line \ref{alg_line: truncation for Z} can be written as $C \gets \tau_{\leq k+1}(C''')$.
We say that chain complexes $(C,d)$ and $(D, \partial)$ over an additive category are \term{chain homotopic up to $k\in \mathbb Z$}, denoted $C\simeq_{\leq k} D$, if for all $i\leq k$ there are morphisms 
$$
f^i\colon C^i \to D^i, \qquad g^i\colon D^i \to C^i, \qquad s^i \colon C^i \to C^{i-1}, \qquad t^i \colon D^i \to D^{i-1} 
$$
so that $f^i$ and $g^i$ commute with the differentials and $s^i$ and $t^i$ give the homotopy to the identities:
\begin{alignat*}{2}
    f^{i+1} d^i &= \partial^i f^i,    \qquad g^if^i- \operatorname{id}_{C^i} &&=d^{i-1}s^i +s^{i+1} d^i \\
    g^{i+1} \partial^i &=d^i g^i,    \qquad f^ig^i- \operatorname{id}_{D^i} &&=\partial^{i-1}t^i +t^{i+1} \partial^i.
\end{alignat*}
The usual definition of a chain homotopy is retrieved with $k=\infty$ and the following lemma is a collection of some useful basic properties.
\begin{lemma} \label{Lemma: basic properties of k-homotopy}
    \begin{enumerate}
        \item $C\simeq D$ implies $C\simeq_{\leq k} D$ for all $k$. \label{Lem:item homotopy->k-homotopy}
        \item $\tau_{\leq k+1} (C) \simeq_k C$, for all $k$.\label{Lem:item truncated homotopy}
        \item If $F$ is an additive functor, then $C\simeq_{\leq k} D$ implies $F(C)\simeq_{\leq k} F(D)$. \label{Lem:item additive functor preserves}   
        \item If $C$ and $D$ are chain complexes over an abelian category and $C\simeq_{\leq k} D$, then $H^i(C) \cong H^i(D)$ for all $i <k$.\label{Lem:item homotopy->homology} 
        \item Let $P$ be a simple planar arc diagram, $c$ a crossing, $C$ and $D$ complexes over $\Cobl^{\mathbb Z}(2n)$ which are supported on the nonnegative homological degrees and $C \simeq_{\leq k} D$. Then $C \otimes_P \llbracket c \rrbracket_{ \operatorname{US}} \simeq_{\leq k} D \otimes_P \llbracket c \rrbracket_{ \operatorname{US}}$. \label{Lem:item tensor -> k-homotopy}
    \end{enumerate}
\end{lemma}
\begin{proof}
    Claims \ref{Lem:item homotopy->k-homotopy}, \ref{Lem:item truncated homotopy}, and \ref{Lem:item additive functor preserves} follow directly from the definitions, \ref{Lem:item homotopy->homology} is a slight modification of the standard result and \ref{Lem:item tensor -> k-homotopy} uses the fact that $\llbracket c \rrbracket_{\operatorname{US}}$ is supported in nonnegative homological degrees.
\end{proof}

For analyzing correctness and estimating the time complexity of the algorithms $\mathcal{A}_k$ we need to define the following \term{intermediate complexes}. We denote by $C_{[t]}$ the complex stored at variable $C$ before running the for loop of $\mathcal A_k$ (Lines \ref{alg_line: compose}-\ref{alg_line: truncation for Z} in Algorithm \ref{Algorithm: extremal lines over Z}) for $t$:th time where $t\leq n(L)-1$ and by $C_{[n(L)]}$ the complex which is stored at variable $C$ after the for loop.

\begin{proof}[Proof that $\mathcal A_k$ computes $\operatorname{Kh}^{i,\ast} (L)$ for $i\leq k-n_-(L) $ correctly. ]
    At every line of Algorithm \ref{Algorithm: extremal lines over Z} one can use a corresponding claim of Lemma \ref{Lemma: basic properties of k-homotopy} to see that chain homotopy up to $k+1$ is preserved: At line \ref{alg_line: initialize} we initialize the complexes to be the same: $C_{[1]}=\llbracket c_1 \rrbracket_{\operatorname{US}}$ so in particular $C_{[1]}\simeq_{\leq k+1}\llbracket T_1 \rrbracket_{\operatorname{US}}$. Then to proceed with induction, we use Claim \ref{Lem:item tensor -> k-homotopy} at Line \ref{alg_line: compose}, Claim \ref{Lem:item homotopy->k-homotopy} at Lines \ref{alg_line: deloop and keep block decomp} and \ref{alg_line: gaussian eliminate blocks}, Claim \ref{Lem:item truncated homotopy} at Line \ref{alg_line: truncation for Z} and Claim \ref{Lem:item additive functor preserves} at Line \ref{alg_line: TQFT}. This guarantees that 
    $$
    F(C_{[n]}) \simeq_{\leq k+1} F\llbracket L \rrbracket_{\operatorname{US}}=u^{n_-(L)} q^{-n_+(L)+2n_-(L)}\CKh(L)
    $$ and hence by Claim \ref{Lem:item homotopy->homology} the algorithm $\mathcal{A}_k$ computes $\operatorname{Kh}^{i,\ast} (L)$ for $i\leq k-n_-(L) $ correctly.
\end{proof}

\subsection{Proof of polynomial time complexity for \texorpdfstring{$\mathcal A_{k}$}{Ak}} \label{subsec: polytime}

In order to measure the sizes of objects and limit the number of operations, we establish two definitions.
Firstly, for an object $B$ in $\Cobl^{\Z} (2n)$,  we set  $\rankCob(B)=n$ where $n$ is the number of loopless diagrams in the decomposition (\ref{eq:decomposition of objects}). Secondly, for a complex $(C,d)$ over $\Cobl^{\mathbb Z}(2n)$, we define $\lVert C \rVert_i= \max \{ \lVert f \rVert : f \text{ matrix element of } d^i \}
$ for every integer $i$.

\begin{lemma} \label{lemma: rank and integer bounds} 
    The following bounds hold for intermediate complexes $C_{[t]}$ of the algorithm $\mathcal A_k$
    \begin{alignat}{2}
    \rankCob( C_{[t]}^i) &\leq \binom{t}{i} \qquad &&\text{for } t<n(L)  \label{eq: rankcob bound t<n} \\
    \rankCob( C_{[n(L)]}^i) &\leq 2\binom{n(L)}{i} && \label{eq: rankcob bound n}\\
    \lVert C_{[t]} \rVert_i &\leq \exp_{2}\left( \left(\frac{g}{2}+1\right)\left(\binom{t+1}{i+1}-1 \right) \right)  \qquad &&\text{for all } t \label{eq: integer bound for intermediate complexes} 
    \end{alignat}
    where $g$ is the girth of the nice scanning sequence and $\exp_{2}(a)$ denotes $2^a$. 
\end{lemma}
\begin{proof}
    All three claims are proven with an induction on $t$. For (\ref{eq: rankcob bound t<n}) and (\ref{eq: rankcob bound n}) the base case $t=1$ is trivial and the induction step splits into 4 simple cases depending on the type of the planar arc diagram $P$. For example, with Type II diagram and using the notation of Section \ref{subsec: blocks} we can directly compute
    $$
    \rankCob( C_{[t+1]}^i) = \rankCob ( E^i \otimes_P \smoothingZero \oplus q \overline{D^{i-1} \otimes_P \smoothingOne} \oplus E^{i-1} \otimes_P \smoothingOne ) \leq \binom{t}{i}+ \binom{t}{i-1}=\binom{t+1}{i}.
    $$
    The additional factor $2$ in (\ref{eq: rankcob bound n}) is associated to the extra deloopings involved with the Type IV diagram. 

    For (\ref{eq: integer bound for intermediate complexes}) the base case $t=1$ is routinely verified and we assume that (\ref{eq: integer bound for intermediate complexes}) holds for some $t$. It is straightforward to see that 
    \begin{equation}
     \lVert C_{[p]} \rVert_i=0 \qquad \text{when } i<0 \text{ or } i\geq p \label{eq: norm C vanishes}  
    \end{equation}
    since the complex $\llbracket T_p \rrbracket$ is also $0$ in homological degrees $i<0$ and $i>p$. The morphisms of $C_{[t]}$ are also the morphisms of untruncated Morse complex $M(C'')=C'''$ which are sums of zig-zag paths. In the Morse complexes one can see that there are at most 3 paths on every sum and in Type III and IV they will use either $(m_1^i)^{-1}$, or $(m_2^i)^{-1}$ or neither but not both. The norm of each of the three summands will be bounded by $2^{g/2-1} \max( \lVert C_{[t]}\rVert_i ,1)\max( \lVert C_{[t]}\rVert_{i-1} ,1)$ where the $2^{g/2-1}$ term comes from Lemma \ref{lemma: horizontal composition}. In particular we get
    \begin{equation}
        \lVert C_{[t+1]} \rVert_i \leq 3\cdot 2^{\frac{g}{2}-1} \max( \lVert C_{[t]}\rVert_i ,1)\max( \lVert C_{[t]}\rVert_{i-1} ,1). \label{eq: C norm ineq}
    \end{equation}
    If $1\leq i \leq t-1$, we can use the induction assumption and (\ref{eq: C norm ineq}) to compute: 
    \begin{align*}
        \lVert C_{[t+1]} \rVert_i & \leq 3 \cdot 2^{g-1} \exp_{2} \left( (g+1) \left( \binom{t+1}{i+1} -1 \right) \right) \exp_{2} \left( (g+1) \left( \binom{t+1}{i} -1 \right) \right) \\
        &\leq \exp_2\left( (g+1) \left( \binom{t+2}{i+1}-1 \right) \right).
    \end{align*}
    For $i=0$ and $i=t$, the claim is obtained by using (\ref{eq: norm C vanishes}), (\ref{eq: C norm ineq}), and the induction assumption.
\end{proof}

\begin{proof}[Proof of the running time estimate for $\mathcal{A}_k$]
    As an input to $\mathcal A_k$, let us limit to link diagrams $L$ with nice scanning sequences $\mathcal S$ with a global upper bound $\operatorname{girth}(\mathcal S) \leq g$  on them. By analyzing the pseudocode of Algorithm \ref{Algorithm: extremal lines over Z} line by line, we aim to show that on this fixed set of inputs $\mathcal{A}_k$ runs in polynomial time with respect to $n(L)$. 

    By Lemma \ref{lemma: rank and integer bounds} the intermediate complex $C_{[t]}$ has at most $2 \sum_{j=0}^{k+1} \binom{t}{j}=\mathcal O(t^{k+1})$ objects. Since each matrix element of $C_{[t]}$ connects a pair of objects, there are at most $\mathcal O(t^{2k+2})$ matrix elements $f$ and by Lemma $\ref{lemma: rank and integer bounds}$ their norms are bounded by
    $$
    \log_2 \lVert f \rVert \leq \max_{j\in \{0,\dots, k+1\} } \left( \binom{t+1}{j+1} -1  \right)(g+1) = \mathcal O(t^{k+1}).
    $$
    To generate $C'= C_{[t]}\otimes_{P_t} \llbracket c_{t+1} \rrbracket_{\operatorname{US}}$ from $C_{[t]}$ we perform at most $\mathcal O(t^{k+1})$ vertical compositions of objects, $\mathcal O(t^{2k+2})$ vertical compositions of morphisms and create at most $\mathcal O(t^{k+1})$ new saddle morphisms. When creating $C''$ from $C'$ we deloop  $\mathcal O(t^{k+1})$ objects and compose $\mathcal O(t^{2k+2})$ morphisms with the delooping $\mathfrak d_1$, $\mathfrak d_2$ and relooping $\mathfrak r_1$, $\mathfrak r_2 $ morphisms. In constructing $C'''$ from $C''$ we perform $\mathcal O(t^{2k+2})$ horizontal compositions and summations of morphisms. By Lemma \ref{lemma: horizontal composition} each horizontal composition takes polynomial time with respect to $\mathcal O(t^{k+1})$ since $\operatorname{girth} (T_t)\leq \operatorname{girth}(\mathcal S) \leq g$. The time taken for the truncation at Line \ref{alg_line: truncation for Z} is negligible. We have now shown that there exists some constant $a\in \mathbb Z$, which is independent of $t$, and which ensures that only $\mathcal O(t^a)$ operations are performed at the $t$:th iteration of the for loop. 
    
    To execute the whole for loop, $n(L)-1$ iterations are made with different $t\leq n(L)-1$ values, so in particular at most
    $$
    \sum_{t=1}^{n(L)-1} \mathcal{O}(t^a)\leq \sum_{t=1}^{n(L)-1} \mathcal{O}(n(L)^a) =\mathcal O(n(L)^{a+1})
    $$
    operations are performed before the Smith normal form at Line \ref{alg_line: homology}. Smith normal form of an integer matrix $A^{n\times m}$ can be computed in polynomial time with respect to $n$, $m$ and $\max_{i,j}\{\log_2(|A_{i,j}|)\}$, \cite{10.1145/236869.237084}. By Lemma \ref{lemma: rank and integer bounds} the matrices of $F(C_{[n(L)]}) $  admit polynomial upper bounds with respect to $n(L)$ for these quantities. Hence $\mathcal A_k$ runs in polynomial time for the inputs $(L,\mathcal{S})$ with $\operatorname{girth}(\mathcal S) \leq g$. Thus the running time of algorithm $\mathcal{A}_k$ for all inputs $(L,\mathcal S)$ is $\mathcal O(f(\operatorname{girth}(S)) \cdot p_k(n(L)))$ for some function $f$ and polynomial $p_k$.
\end{proof}

\section{Binomial rank bounds via nice scanning sequences}

A graph $G$ is \term{connected} if there is a path between every two vertices is $G$. In addition, a graph $G$ is called \term{$2$-connected} if for every vertex $v\in G$ the induced graph of $G\setminus \{v\}$ is connected. 

\begin{lemma}\label{Lemma: 2-connected graph property}
    Let $G=(V,E)$ be a $2$-connected graph and $A\subset V$ a subset of vertices so that $\# A\geq 1$ and $\#(V \setminus A) \geq 2$. If the induced graphs of $A$ and $V \setminus A$ are connected there exists $x\in V\setminus A$ so that the induced subgraphs of $A\cup \{x\}$ and $V\setminus (A\cup \{x\})$ are also connected. 
\end{lemma}
\begin{proof}
    Assume that $x$ is a vertex of $V \setminus A$ for which the following maximum is obtained: 
    $$
    M=\max \big\{ \# C \mid x\in (V\setminus A), \ A\cup \{x\} \text{ connected, }  C \text{ is a component of } V\setminus (A\cup \{x\}) \big\}.
    $$
    If $M=\#V - \#A -1$, then $V \setminus (A \cup \{x\})$ is connected so we can assume towards contradiction that $M <\# V - \# A -1$.
    
    Let $C$ and $D$ be two components of $V \setminus (A \cup \{x\})$ with $M= \# C$ and pick any vertices $c\in C$ and $d\in D$. Since $G$ is $2$-connected, there is a path $P$ from $d$ to $c$ which does not cross $x$. Denote $d'\in D$ as the last vertex of $P$ in $D$, so that $V \cup \{d'\}$ is connected and write $C'$ for the component of $c$ in $ V\setminus (A\cup \{d'\})$. It follows that $\#C < \#C'$ since the component $C'$ additionally contains the vertex $x$. On the other hand, $\# C' \leq M= \# C$ by maximality of $M$ which gives us the desired contradiction.
\end{proof} 

A link diagram is called \term{reduced} if it does not contain any nugatory crossings, see Figure \ref{fig: nugatory crossing}. It is straightforward to see that a connected link diagram $L$ can be transformed into an isotopic, reduced, and connected link diagram $L'$ with $n_+(L) \geq n_+(L')$ and $n_-(L) \geq n_-(L)$. 

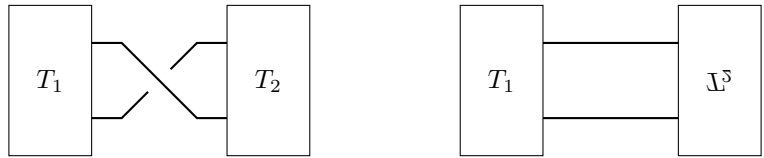
\begin{figure}[ht]
    \centering
    \begin{tikzpicture}
\draw (-1.5,-0.5) rectangle (-0.4,1.5);

    \draw[thick] (-0.4,0) -- (0,0)  -- (0.35,0.35);
    \draw[thick] (0.65,0.65)  -- (1,1) -- (1.4,1);
    \draw[thick] (1.4,0) --(1,0) -- (0,1) -- (-0.4,1);

\draw (1.4,-0.5) rectangle (2.5,1.5);

\node at (-0.95,0.5) {$T_1$};
\node at (1.95,0.5) {$T_2$};

\begin{scope}[shift={(6,0)}]
    
\draw (-1.5,-0.5) rectangle (-0.4,1.5);

    \draw[thick] (-0.4,0) -- (1.4,0);
    \draw[thick] (-0.4,1) -- (1.4,1);

\draw (1.4,-0.5) rectangle (2.5,1.5);

\node at (-0.95,0.5) {$T_1$};
\node[yscale=-1]  at (1.95,0.5) {$T_2$};

\end{scope}


\end{tikzpicture}
    \caption{A nugatory crossing (left) and removing it (right). Removing the nugatory crossing flips the right hand side tangle $T_2$ by  $180^{\circ}$ around the $x$-axis which is indicated by the flipping the symbol $T_2$ to $\rotatebox[origin=c]{180}{\reflectbox{$T_2$}}$.  }
    \label{fig: nugatory crossing}
\end{figure}

\begin{lemma}\label{lemma: every link has nice scan}
    Every connected reduced link diagram admits a nice scanning sequence on it.
\end{lemma}
\begin{proof}
    Let $L$ be a connected reduced link diagram and $G_{L}=(V,E)$ the $4$-valent non-simple planar graph of $L$, whose vertices are the crossings of $L$ and whose edges correspond to the strands between crossings. From the fact that $G_{L}$ is connected and reduced, one can see that $G_{L}$ is $2$-connected. (Any vertex separator of $G_{L}$ has to correspond to a nugatory crossing in $L$, but $L$ does not have nugatory crossings since it is reduced.) 

    The sequence of crossings $c_1,\dots, c_n$ for $\mathcal S$ are retrieved from the graph $G_L$ by a greedy algorithm. Start by picking any vertex $c_1\in V$ and observe that the induced graphs $\{c_1\}$ and $V \setminus \{c_1\}$ are connected. Suppose next that $c_1,\dots , c_i$ have been picked, so that $i\leq n-2$ and the induced subgraphs of $\{c_1,\dots, c_i\}$ and $V \setminus \{c_1,\dots, c_i\}$ are connected. By Lemma \ref{Lemma: 2-connected graph property} there is a vertex $c_{i+1}$ for which that the induced subgraphs of $\{c_1,\dots, c_{i+1}\}$ and $V \setminus \{c_1,\dots, c_{i+1}\}$ also remain connected. Finally, once $c_1,\dots, c_{n-1}$ have been chosen, add the last vertex in to complete the sequence of crossings $c_1,\dots, c_n$.

    The sequence of subtangle diagrams $T_1,\dots, T_n$ and simple planar diagrams $P_1,\dots, P_{n-1}$ can be retrieved by setting $T_1=c_1$ and analyzing how $c_{i+1}$ inductively glues into $T_i$. The induced graph of $c_1,\dots, c_{i}$ remains connected after gluing in $c_{i+1}$ which is why $P_i$ has to connect $c_{i+1}$ to $T_i$ with $1,2,3$, or $4$ strands. Moreover the vertex $c_{i+1}$ cannot be connected to $T_i$ with $4$ strands except when $i+1=n$, since the induced graph of $\{c_{i+1},\dots , c_{n}\}$ is connected. Finally, no strand can go from a crossing to itself since $L$ was reduced. Thus $c_{i+1}$ is glued to $T_i$ with a Type I, II, or III simple planar arc diagram, when $i\leq n-2$ and with a Type IV diagram when $i=n-1$ and so $\mathcal{S}=((T_1,\dots T_n),(c_1,\dots, c_n), (P_1,\dots, P_{n-1}))$ forms a nice scanning sequence for $L$.
\end{proof}

\begin{proof}[Proof of Proposition \ref{proposition: rank bounds}]
    Let $L'$ denote the link diagram which is obtained from $L$ by removing the nugatory crossings. By Lemma \ref{lemma: every link has nice scan} there exists a nice scanning sequence $\mathcal S$ on $L'$. When running the algorithm $A_{k}$ on $(L', \mathcal S)$ with $k=n(L')$ no nonzero chain spaces get truncated so there is an honest chain homotopy $u^{-n_-(L)} q^{n_+(L)-2n_-(L)}C_{[n(L)]}\simeq \llbracket L' \rrbracket$.  Hence for any field $\mathbb F$ we can compute
    \begin{multline*}
    \dim_{\mathbb F} (\operatorname{Kh}^{i,\ast}(L; \mathbb F)) =\dim_{\mathbb F} (\operatorname{Kh}^{i,\ast}(L'; \mathbb F)) \leq  \rank \left(F(u^{-n_-(L)} q^{n_+(L)-2n_-(L)}C_{[n(L)]})^i\right) \\ \leq \rankCob\left(C_{[n(L)]}^{i+n_-(L)}\right) \leq 2\binom{n(L')}{i+n_-(L')} \leq 2\binom{n(L)}{i+n_-(L)}     
    \end{multline*}
    by bounding the homology with the size of the chain complex and using Lemma \ref{lemma: rank and integer bounds}. 
    \end{proof}

\section{Asymptotical strictness of rank bounds}\label{section: Asymptotict nonvanishing}
Proposition \ref{proposition: asymptotical strictness of the bounds}  is a non-vanishing result which claims an asymptotic lower bound on the the rank of certain Khovanov homology groups. To prove it we consider the chain complexes $\CKh (L_t)$, where $L_t$ is the braid closure of the braid diagram $(\sigma_1 \sigma_3 \sigma_2^4)^t \sigma_1 \sigma_3$, see Figure \ref{fig:Lt alpha beta Cw}. On $\CKh(L_t)$ we employ an algorithmic Morse matching $M_{\operatorname{lex}}$ from the previous work of the first author \cite{kelo2025torus4}. This produces a Morse complex $\Mlex (\CKh(L_t))$ where we find $2^t$ explicit, distinct, non-vanishing  homology cycles. 

Since we are using $\Mlex$ on link diagrams $L_t$ which only contain crossings of the form $\crosPos$ the construction of $\Mlex$ can be simplified quite a bit. In \cite{kelo2025torus4} the matching $\Mlex$ was defined on Bar-Natan's tangle complexes but since we only work with specific links, we can define it directly for $\CKh(L_t)$ which are chain complexes of free $\mathbb Z$-modules. The direct sum decomposition which we fix on $\CKh(L_t)$ consists of single copies of $\mathbb Z$: one for each $2$-coloring of each smoothing of $L_t$. The two grayscale colors we use are dashed gray \begin{tikzpicture}[baseline={(0,-0.1)}]
    \draw[thinredline] (0,0) circle (0.125);
\end{tikzpicture} for the unit $1$ and thick black \begin{tikzpicture}[baseline={(0,-0.1)}]
    \draw[thinblueline] (0,0) circle (0.125);
\end{tikzpicture} for the counit $x$.
This allows us to write merge $m$ and split $\Delta$ maps of the Khovanov complex as
\begin{alignat*}{2}
     &m(\grayCirc \otimes \grayCirc)= \grayCirc \qquad&&\Delta(\grayCirc)= \grayCirc \otimes \blackCirc + \blackCirc \otimes \grayCirc  \\
     &m(\blackCirc \otimes \grayCirc)=m(\grayCirc \otimes \blackCirc) =\blackCirc \qquad&&\Delta(\blackCirc)=  \blackCirc \otimes \blackCirc  \\
     &m(\blackCirc \otimes \blackCirc )=0.     
\end{alignat*}
In particular, splitting out $\blackCirc$ will always be an isomorphism and our Morse matching $\Mlex$ will consist of such splits.

We order the crossings of $L_t$ from bottom to top or equivalently the characters of $(\sigma_1 \sigma_3 \sigma_2^4)^t \sigma_1 \sigma_3$ from left to right. For a matrix element $f$ of $\CKh(L_t)$ which changes a $0$ smoothing to a $1$ smoothing at the $k$:th crossing, we denote $i(f)=k$. Diagrammatically, we define 
$$
M=\left\{ f \in \{ \text{matrix elements of } \CKh(L_t) \} \middle\vert \begin{array}{c}
f \text{ splits out a }\blackCirc \text{ loop which completely located} \\
\text{at or below the $i(f)$:th crossing in the diagram } L_t
\end{array}  \right\}.
$$
For every vertex $x$ we define $N(x)$ to be the set of neighboring morphisms from $M$:
$$
N(x) =\{ f\colon a\to x \mid  f \in M \} \cup \{ g\colon x\to b \mid  g \in M \}.
$$
This allows us to finally set
$$
\Mlex= \{ f\colon x \to y \mid  f\in M, \ i(f) \leq i(g)  \text{ for all } g\in N(x)\cup N(y) \}.
$$
Proposition 3.5 of \cite{kelo2025torus4} shows that $\Mlex$ is a Morse matching for every tangle/link diagram and in particular:
\begin{lemma}\label{lemma: Mlex is Morse matching}
    The matching $\Mlex$ is a Morse matching on $\CKh(L_t)$.
\end{lemma}

\begin{figure}
    \centering
    \begin{tikzpicture}

\node at (-5, -0.825) [anchor=south west, scale=0.5] { 
                \begin{tikzpicture}

    \newcommand{\cros}[2]{%
  \begin{scope}[shift={({#1},{#2})}]
    \draw[thick] (0,0) -- (1,1);
    \draw[thick] (0,1) -- (0.35,0.65);
    \draw[thick] (0.65,0.35) -- (1,0);
  \end{scope}
}

\cros{0}{0};
\cros{2}{1};
\cros{1}{2};
\cros{1}{3};
\cros{1}{4};
\cros{1}{5};
\cros{0}{7};
\cros{2}{8};

    \draw[thick] (0, 8) to (0, 9);
    \draw[thick] (1, 8) to (1, 9);
    \draw[thick] (2, 7) to (2, 8);
    \draw[thick] (3, 7) to (3, 8);

    \draw[thick] (0, 1) to (0, 2);
    \draw[thick] (0, 2) to (0, 3);
    \draw[thick] (0, 3) to (0, 4);
    \draw[thick] (0, 4) to (0, 5);
    \draw[thick] (0, 5) to (0, 6);
    \draw[thick] (2, 0) to (2, 1);
    \draw[thick] (3, 0) to (3, 1);
    \draw[thick] (3, 2) to (3, 3);
    \draw[thick] (3, 3) to (3, 4);
    \draw[thick] (3, 4) to (3, 5);
    \draw[thick] (3, 5) to (3, 6);
    \draw[thick] (1, 1) to (1, 2);


    \draw[thick] (0,0) to[bend left=50] (-1,0);
    \draw[thick, <-] (-1,0) to (-1,9) to[bend left=50] (0,9);
    \draw[thick] (1,0) to[bend left=50] (-1.5,0);
    \draw[thick, <-] (-1.5,0) to (-1.5,9) to[bend left=50] (1,9);
    \draw[thick] (2,0) to[bend left=50] (-2,0);
    \draw[thick, <-](-2,0) to (-2,9) to[bend left=50] (2,9);
    \draw[thick] (3,0) to[bend left=50] (-2.5,0);
    \draw[thick, <-](-2.5,0)to (-2.5,9) to[bend left=50] (3,9);


    \end{tikzpicture}

    
        };  

\node at (-0.8, 0) [anchor=south west, scale=0.5] { 
                \begin{tikzpicture}
    \begin{scope}[color=blue]
\end{scope}
\begin{scope}[redline]
        \draw (0, 0) to[bend left=50] (1, 0);
    \draw (2, 0) to (2, 1);
    \draw (2, 1) to[bend left=50] (3, 1);
    \draw (3, 0) to (3, 1);
    \draw (0, 5) to (0, 6);
    \draw (0, 4) to (0, 5);
    \draw (0, 3) to (0, 4);
    \draw (0, 2) to (0, 3);
    \draw (0, 1) to (0, 2);
    \draw (0, 1) to[bend right=50] (1, 1);
    \draw (1, 1) to (1, 2);
    \draw (1, 2) to[bend right=50] (1, 3);
    \draw (1, 3) to[bend left=50] (2, 3);
    \draw (2, 2) to[bend left=50] (2, 3);
    \draw (2, 2) to[bend right=50] (3, 2);
    \draw (3, 2) to (3, 3);
    \draw (3, 3) to (3, 4);
    \draw (3, 4) to (3, 5);
    \draw (3, 5) to (3, 6);
    \draw (1, 6) to[bend right=50] (2, 6);

    \draw (1, 4) to[bend left=50] (2, 4);
    \draw (1, 4) to[bend right=50] (2, 4);
    \draw (1, 5) to[bend left=50] (2, 5);
    \draw (1, 5) to[bend right=50] (2, 5);
\end{scope}

    \end{tikzpicture}

    
        };  
\node at (2.5,0) [anchor=south west, scale=0.5] { 
                \begin{tikzpicture}
    \begin{scope}[color=blue]
\end{scope}
\begin{scope}[redline]
        \draw (0, 0) to[bend left=50] (1, 0);
    \draw (2, 0) to (2, 1);
    \draw (2, 1) to[bend left=50] (3, 1);
    \draw (3, 0) to (3, 1);
    \draw (0, 5) to (0, 6);
    \draw (0, 4) to (0, 5);
    \draw (0, 3) to (0, 4);
    \draw (0, 2) to (0, 3);
    \draw (0, 1) to (0, 2);
    \draw (0, 1) to[bend right=50] (1, 1);
    \draw (1, 1) to (1, 2);
    \draw (1, 2) to[bend left=50] (2, 2);
    \draw (2, 2) to[bend right=50] (3, 2);
    \draw (3, 2) to (3, 3);
    \draw (3, 3) to (3, 4);
    \draw (3, 4) to (3, 5);
    \draw (3, 5) to (3, 6);
    \draw (1, 6) to[bend right=50] (2, 6);

    \draw (1, 3) to[bend left=50] (2, 3);
    \draw (1, 3) to[bend right=50] (2, 3);
    \draw (1, 4) to[bend left=50] (2, 4);
    \draw (1, 4) to[bend right=50] (2, 4);
    \draw (1, 5) to[bend left=50] (2, 5);
    \draw (1, 5) to[bend right=50] (2, 5);
\end{scope}

    \end{tikzpicture}

     
        };

\node at (6, -0.825) [anchor=south west, scale=0.5] { 
                \begin{tikzpicture}

    \newcommand{\cros}[2]{%
  \begin{scope}[shift={({#1},{#2})}]
    \draw[redline] (0,0) -- (1,1);
    \draw[redline] (0,1) -- (0.35,0.65);
    \draw[redline] (0.65,0.35) -- (1,0);
  \end{scope}
}
    
    \draw[redline] (0,8) to [bend right=50] (1,8);
    \draw[redline] (0,7) to [bend left=50] (1,7);

    \draw[redline] (2,8) to [bend left=50] (3,8);
    \draw[redline] (2,9) to [bend right=50] (3,9);

    \draw[redline] (0, 8) to (0, 9);
    \draw[redline] (1, 8) to (1, 9);
    \draw[redline] (2, 7) to (2, 8);
    \draw[redline] (3, 7) to (3, 8);

    \draw[redline] (0,0.5) to (0,0) to[bend left=50] (-1,0) to (-1,9) to[bend left=50] (0,9);
    \draw[redline] (1,0.5) to (1,0) to[bend left=50] (-1.5,0) to (-1.5,9) to[bend left=50] (1,9);
    \draw[redline] (2,0.5) to (2,0) to[bend left=50] (-2,0) to (-2,9) to[bend left=50] (2,9);
    \draw[redline] (3,0.5) to (3,0) to[bend left=50] (-2.5,0) to (-2.5,9) to[bend left=50] (3,9);

    \draw (-0.2,0.5) rectangle (3.2,2);
    \draw[redline] (0,2.5) -- (0,2);
    \draw[redline] (1,2.5) -- (1,2);
    \draw[redline] (2,2.5) -- (2,2);
    \draw[redline] (3,2.5) -- (3,2);

    \draw[redline] (0,4.5) -- (0,5);
    \draw[redline] (1,4.5) -- (1,5);
    \draw[redline] (2,4.5) -- (2,5);
    \draw[redline] (3,4.5) -- (3,5);

    \draw (-0.2,5) rectangle (3.2,2.5+4);
    
    \draw[redline] (0,6.5) -- (0,7);
    \draw[redline] (1,6.5) -- (1,7);
    \draw[redline] (2,6.5) -- (2,7);
    \draw[redline] (3,6.5) -- (3,7);

    \end{tikzpicture}

    
        };  
        
\node[black!70] at (1.3,0.25) {$1$};
\node[black!70] at (1.3,0.75) {$1$};
\node[black!70] at (1.3,1.25) {$0$};
\node[black!70] at (1.3,1.75) {$1$};
\node[black!70] at (1.3,2.25) {$1$};
\node[black!70] at (1.3,2.75) {$1$};

\node[black!70] at (4.6,0.25) {$1$};
\node[black!70] at (4.6,0.75) {$1$};
\node[black!70] at (4.6,1.25) {$1$};
\node[black!70] at (4.6,1.75) {$1$};
\node[black!70] at (4.6,2.25) {$1$};
\node[black!70] at (4.6,2.75) {$1$};

\node at (0.05,-0.3) {$\alpha$};
\node at (0.35+3,-0.3) {$\beta$};

\node at (7.5,-1) {$u_w$};


\node at (-2.9,3.42) {$\vdots$};

\node at (-3.5,-1) {$L_t$};

\node at (8,2.9) {$w_t$};
\node at (8,0.65) {$w_1$};

\node at (8.05,1.9) {$\vdots$};

\end{tikzpicture}

    
    \caption{The link diagram $L_t$ and local pictures $\alpha$, $\beta$. The special vertices $u_w$ of $G(\CKh(L_t), \Mlex)$ are constructed by taking a tuple $w\in \{\alpha, \beta \}^{t}$ and gluing the local pictures of $w$ on top of each other, $1$ smoothing the two last vertices and using the dashed gray color on all loops.}
    \label{fig:Lt alpha beta Cw}
\end{figure}

In Figure \ref{fig:Lt alpha beta Cw} we construct special vertices $u_w$ of $G(\CKh(L_t), \Mlex)$ for every $w\in \{\alpha, \beta \}^{t}$. These pictures $u_w$ end up representing distinct homology cycles in the Morse complex $\Mlex(\CKh(L_t))$.
The proof of Proposition \ref{proposition: asymptotical strictness of the bounds} comes as a consequence of the following three claims on the diagrams $u_w$:
\begin{enumerate}
    \item \label{claim: u_w unmatched}
    The vertex $u_w$ is unmatched in $\Mlex$ and thus it represents a copy of $\mathbb Z$ in homological degree $6t+2- \#\{j \mid w_j=\alpha \}$ of the Morse complex $M_{\operatorname{lex}}(\CKh (L_t))$.

    \item \label{claim: d to u_w is 0}
    Suppose the homological degree of $u_w$ is $h$ and denote the projection of $(\Mlex(L_t))^h$ to $u_w$ by $\pi$. Then $\pi\partial^{h-1} =0$.
    
    \item \label{claim: d from u_w is 0}
    Suppose the homological degree of $u_w$ is $h$ and denote the inclusion of $u_w$ into $(\Mlex(L_t))^h$ by $\iota$. Then $\partial^{h}\iota =0$.
\end{enumerate}

\begin{proof}[Proof of Proposition \ref{proposition: asymptotical strictness of the bounds}]
     By Claim \ref{claim: u_w unmatched} every $u_w$ represents a copy of $\mathbb Z$ in the complex $\Mlex(\CKh(L_t))$ and by Claims \ref{claim: d to u_w is 0} and \ref{claim: d from u_w is 0} that $\mathbb Z$ fully contributes to the homology $H(\Mlex( \CKh(L_t)))\cong \HKh(L_t)$. This yields
     $$
     \HKh^{n(L_t)-k, \ast} (L_t)\cong \mathbb Z^{\binom{t}{k} } \oplus A_{k,t} 
     $$
     for some $\mathbb Z$-modules $A_{k,t}$. Thus
     $$
     \liminf_{t\to \infty}   \left(\frac{\operatorname{rank}(\operatorname{Kh}^{n(L_t)-k,\ast} (L_t))}{\binom{n(L_t)}{n(L_t) -k}} \right)  \geq \liminf_{t\to \infty }\left(\frac{\binom{t}{k}}{\binom{6t+2}{6t+2-k}} \right) = \frac{1}{6^k}
     $$
    concluding the proof.    
\end{proof}

\begin{proof}[Proof of Claim \ref{claim: u_w unmatched}] 
    Changing any $0$-smoothing of $u_w$ to a $1$-smoothing generates a split morphism $f\colon u_w \to y$. No matter what crossing we chose to act on by $f$, neither of the loops in $y$ which are split by $f$ will be completely contained below $i(f)$ in $y$. Hence $f \notin M$. On the other hand, changing any $1$-smoothing of $u_w$ to a $0$-smoothing is geometrically a split in the reverse direction. However $u_w$ does not contain any $\blackCirc$ loops and every split morphisms generates at least one $\blackCirc$ loop, so there cannot exist an edge $g\colon x \to u_w$ in the graph $G(\CKh(L_t))$. We conclude that $N (u_w) = \emptyset$ and so $u_w$ is unmatched.
\end{proof}

\begin{proof}[Proof of Claim \ref{claim: d to u_w is 0}]
    Let $x$ be an unmatched vertex of $G(\CKh(L_t), \Mlex)$ with homological degree $h-1$. In the previous claim we argued that there are no edges in $x \to u_w$ in the graph $G(\CKh(L_t))$. It follows that cannot exist any zig-zag paths from $x$ to $u_w$. Thus $\partial_{u_w,x}=0$ and so $\pi \partial^{h-1}=0$ as well.
\end{proof}

\subsection{Proof of Claim \ref{claim: d from u_w is 0}}
Let $y$ be an unmatched vertex of $G(\CKh(L_t), \Mlex)$ with homological degree $h+1$ and set
$
P=\{ \text{paths: } u_w\to y \}.
$
In Claim \ref{claim: d to u_w is 0} we deduced $\pi \partial^{h-1}=0$ from the fact that there were no paths nontrivial paths from any vertex to $u_w$ in $G(\CKh(L_t), \Mlex)$. Since $P$ can be non-empty the same tactic cannot be used to  prove $\partial^h \iota=0$. Instead, we will show that $P$ can be arranged into distinct pairs $(p,\lambda(p))$ for which $R(\lambda(p))=-R(p)$. From this pairing, one can easily compute:
$$
\partial_{y,u_w} = \sum_{q\in P} R(q) = \sum_{(p,\lambda(p))} R(p)+R(\lambda(p))=0.
$$

\begin{figure}
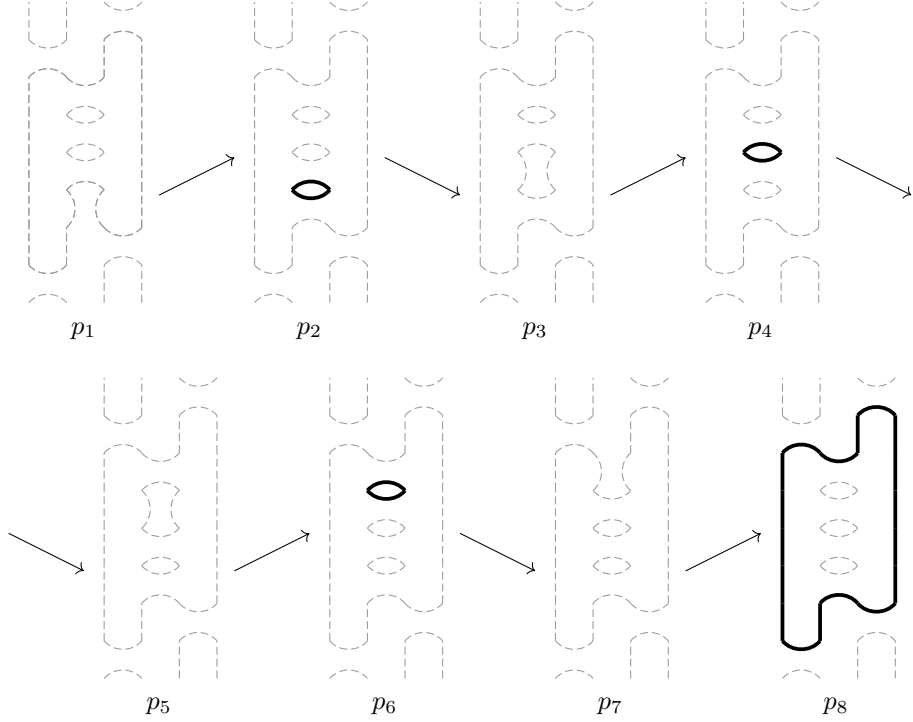
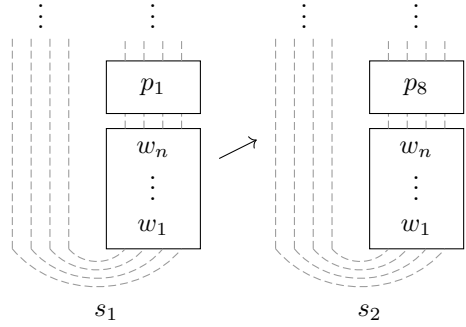
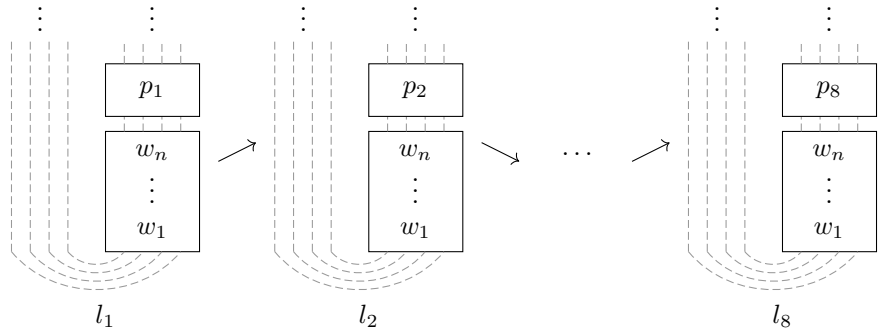

    \centering
    \begin{subfigure}{\textwidth}
        \centering        
        \begin{tikzpicture}
\node at (0, 0) [anchor=south west, scale=0.5] { 
            \input{Strictness_diags/L-path/ACBBBBAC110YYY1Y}
        };  
\node at (3,0) [anchor=south west, scale=0.5] { 
            \input{Strictness_diags/L-path/ACBBBBAC111XYY1Y} 
        };

\node at (6,0) [anchor=south west, scale=0.5] { 
            \input{Strictness_diags/L-path/ACBBBBAC1110YY1Y} 
        };

\node at (9,0) [anchor=south west, scale=0.5] { 
            \input{Strictness_diags/L-path/ACBBBBAC111YXY1Y} 
        };

\node at (1,-5) [anchor=south west, scale=0.5] { 
            \input{Strictness_diags/L-path/ACBBBBAC111Y0Y1Y} 
        };

\node at (4,-5) [anchor=south west, scale=0.5] { 
            \input{Strictness_diags/L-path/ACBBBBAC111YYX1Y} 
        };

\node at (7,-5) [anchor=south west, scale=0.5] { 
            \input{Strictness_diags/L-path/ACBBBBAC111YY01Y} 
        };

\node at (10,-5) [anchor=south west, scale=0.5] { 
            \input{Strictness_diags/L-path/ACBBBBAC111YYY1X} 
        };

\draw[->] (1.8,1.5) -- (2.8,2); 
\draw[->] (1.8+3,2) -- (2.8+3,1.5); 
\draw[->] (1.8+6,1.5) -- (2.8+6,2);
\draw[->] (1.8+9,2) -- (2.8+9,1.5);

\draw[->] (2.8-3,-3) -- (3.8-3,-3.5); 

\draw[->] (2.8,-3.5) -- (3.8,-3); 
\draw[->] (2.8+3,-3) -- (3.8+3,-3.5); 
\draw[->] (2.8+6,-3.5) -- (3.8+6,-3); 

\node at (0.8,-0.3) {$p_1$};
\node at (0.8+3,-0.3) {$p_2$};
\node at (0.8+6,-0.3) {$p_3$};
\node at (0.8+9,-0.3) {$p_4$};

\node at (1.8,-5.3) {$p_5$};
\node at (1.8+3,-5.3) {$p_6$};
\node at (1.8+6,-5.3) {$p_7$};
\node at (1.8+9,-5.3) {$p_8$};

\end{tikzpicture}

    
        \caption{The local pictures $p_1,\dots ,p_8$ which are used to make the $s$- and $l$-subpaths.}
    \end{subfigure}
        \begin{subfigure}{\textwidth}
        \centering
        \begin{tikzpicture}

\node at (0, 0) { 
            \begin{tikzpicture}

\draw (0,0) rectangle (1.25,1.6);

\draw[thinredline] (-0.5,0) to[bend right=50] (0.25,0); 
\draw[thinredline] (-0.75,0) to[bend right=50] (0.5,0); 
\draw[thinredline] (-1,0) to[bend right=50] (0.75,0); 
\draw[thinredline] (-1.25,0) to[bend right=50] (1,0);

\node[align=center] at(0.62,.8) {$w_n$ \\ $\vdots$ \\ $w_1$};

\draw[thinredline] (0.25,1.6) to (0.25,1.8); 
\draw[thinredline] (0.5,1.6) to (0.5,1.8); 
\draw[thinredline] (0.75,1.6) to (0.75,1.8); 
\draw[thinredline] (1,1.6) to (1,1.8);

\draw (0,1.8) rectangle (1.25,2.5);

\draw[thinredline] (0.25,2.5) to (0.25,2.8); 
\draw[thinredline] (0.5,2.5) to (0.5,2.8); 
\draw[thinredline] (0.75,2.5) to (0.75,2.8); 
\draw[thinredline] (1,2.5) to (1,2.8); 

\draw[thinredline] (-0.5,0) to (-0.5,2.8); 
\draw[thinredline] (-0.75,0) to (-0.75,2.8); 
\draw[thinredline] (-1,0) to (-1,2.8); 
\draw[thinredline] (-1.25,0) to (-1.25,2.8); 

\node at (0.62,3.2) {$\vdots$};
\node at (-1.12+0.25,3.2) {$\vdots$};

\end{tikzpicture}

    
        };  

\node at (3.5, 0) { 
            \begin{tikzpicture}

\draw (0,0) rectangle (1.25,1.6);

\draw[thinredline] (-0.5,0) to[bend right=50] (0.25,0); 
\draw[thinredline] (-0.75,0) to[bend right=50] (0.5,0); 
\draw[thinredline] (-1,0) to[bend right=50] (0.75,0); 
\draw[thinredline] (-1.25,0) to[bend right=50] (1,0);

\node[align=center] at(0.62,.8) {$w_n$ \\ $\vdots$ \\ $w_1$};

\draw[thinredline] (0.25,1.6) to (0.25,1.8); 
\draw[thinredline] (0.5,1.6) to (0.5,1.8); 
\draw[thinredline] (0.75,1.6) to (0.75,1.8); 
\draw[thinredline] (1,1.6) to (1,1.8);

\draw (0,1.8) rectangle (1.25,2.5);

\draw[thinredline] (0.25,2.5) to (0.25,2.8); 
\draw[thinredline] (0.5,2.5) to (0.5,2.8); 
\draw[thinredline] (0.75,2.5) to (0.75,2.8); 
\draw[thinredline] (1,2.5) to (1,2.8); 

\draw[thinredline] (-0.5,0) to (-0.5,2.8); 
\draw[thinredline] (-0.75,0) to (-0.75,2.8); 
\draw[thinredline] (-1,0) to (-1,2.8); 
\draw[thinredline] (-1.25,0) to (-1.25,2.8); 

\node at (0.62,3.2) {$\vdots$};
\node at (-1.12+0.25,3.2) {$\vdots$};

\end{tikzpicture}

    
        };

\node at (0.62,0.7) {$p_1$};
\node at (0.62+3.5,0.7) {$p_8$};

\draw[->] (1.5,-0.25) -- (2,0);

\node at (0,-2.3) {$s_1$};
\node at (3.5,-2.3) {$s_2$};
\end{tikzpicture}

    
        \caption{An $s$-subpath. The lower boxes $(w_1,\dots, w_n) \in\{\alpha, \beta \}^n$ contain  $n$-copies of the $\alpha$ and $\beta$ pictures from Figure \ref{fig:Lt alpha beta Cw} stacked on top of each other.}
        \label{Figure: Spath}
    \end{subfigure}
        \begin{subfigure}{\textwidth}
        \centering
        \begin{tikzpicture}

\node at (0, 0) { 
            \begin{tikzpicture}

\draw (0,0) rectangle (1.25,1.6);

\draw[thinredline] (-0.5,0) to[bend right=50] (0.25,0); 
\draw[thinredline] (-0.75,0) to[bend right=50] (0.5,0); 
\draw[thinredline] (-1,0) to[bend right=50] (0.75,0); 
\draw[thinredline] (-1.25,0) to[bend right=50] (1,0);

\node[align=center] at(0.62,.8) {$w_n$ \\ $\vdots$ \\ $w_1$};

\draw[thinredline] (0.25,1.6) to (0.25,1.8); 
\draw[thinredline] (0.5,1.6) to (0.5,1.8); 
\draw[thinredline] (0.75,1.6) to (0.75,1.8); 
\draw[thinredline] (1,1.6) to (1,1.8);

\draw (0,1.8) rectangle (1.25,2.5);

\draw[thinredline] (0.25,2.5) to (0.25,2.8); 
\draw[thinredline] (0.5,2.5) to (0.5,2.8); 
\draw[thinredline] (0.75,2.5) to (0.75,2.8); 
\draw[thinredline] (1,2.5) to (1,2.8); 

\draw[thinredline] (-0.5,0) to (-0.5,2.8); 
\draw[thinredline] (-0.75,0) to (-0.75,2.8); 
\draw[thinredline] (-1,0) to (-1,2.8); 
\draw[thinredline] (-1.25,0) to (-1.25,2.8); 

\node at (0.62,3.2) {$\vdots$};
\node at (-1.12+0.25,3.2) {$\vdots$};

\end{tikzpicture}

    
        };  

\node at (3.5, 0) { 
            \begin{tikzpicture}

\draw (0,0) rectangle (1.25,1.6);

\draw[thinredline] (-0.5,0) to[bend right=50] (0.25,0); 
\draw[thinredline] (-0.75,0) to[bend right=50] (0.5,0); 
\draw[thinredline] (-1,0) to[bend right=50] (0.75,0); 
\draw[thinredline] (-1.25,0) to[bend right=50] (1,0);

\node[align=center] at(0.62,.8) {$w_n$ \\ $\vdots$ \\ $w_1$};

\draw[thinredline] (0.25,1.6) to (0.25,1.8); 
\draw[thinredline] (0.5,1.6) to (0.5,1.8); 
\draw[thinredline] (0.75,1.6) to (0.75,1.8); 
\draw[thinredline] (1,1.6) to (1,1.8);

\draw (0,1.8) rectangle (1.25,2.5);

\draw[thinredline] (0.25,2.5) to (0.25,2.8); 
\draw[thinredline] (0.5,2.5) to (0.5,2.8); 
\draw[thinredline] (0.75,2.5) to (0.75,2.8); 
\draw[thinredline] (1,2.5) to (1,2.8); 

\draw[thinredline] (-0.5,0) to (-0.5,2.8); 
\draw[thinredline] (-0.75,0) to (-0.75,2.8); 
\draw[thinredline] (-1,0) to (-1,2.8); 
\draw[thinredline] (-1.25,0) to (-1.25,2.8); 

\node at (0.62,3.2) {$\vdots$};
\node at (-1.12+0.25,3.2) {$\vdots$};

\end{tikzpicture}

    
        };

\node at (9, 0) { 
            \begin{tikzpicture}

\draw (0,0) rectangle (1.25,1.6);

\draw[thinredline] (-0.5,0) to[bend right=50] (0.25,0); 
\draw[thinredline] (-0.75,0) to[bend right=50] (0.5,0); 
\draw[thinredline] (-1,0) to[bend right=50] (0.75,0); 
\draw[thinredline] (-1.25,0) to[bend right=50] (1,0);

\node[align=center] at(0.62,.8) {$w_n$ \\ $\vdots$ \\ $w_1$};

\draw[thinredline] (0.25,1.6) to (0.25,1.8); 
\draw[thinredline] (0.5,1.6) to (0.5,1.8); 
\draw[thinredline] (0.75,1.6) to (0.75,1.8); 
\draw[thinredline] (1,1.6) to (1,1.8);

\draw (0,1.8) rectangle (1.25,2.5);

\draw[thinredline] (0.25,2.5) to (0.25,2.8); 
\draw[thinredline] (0.5,2.5) to (0.5,2.8); 
\draw[thinredline] (0.75,2.5) to (0.75,2.8); 
\draw[thinredline] (1,2.5) to (1,2.8); 

\draw[thinredline] (-0.5,0) to (-0.5,2.8); 
\draw[thinredline] (-0.75,0) to (-0.75,2.8); 
\draw[thinredline] (-1,0) to (-1,2.8); 
\draw[thinredline] (-1.25,0) to (-1.25,2.8); 

\node at (0.62,3.2) {$\vdots$};
\node at (-1.12+0.25,3.2) {$\vdots$};

\end{tikzpicture}

    
        };

\node at (0.62,0.7) {$p_1$};
\node at (0.62+3.5,0.7) {$p_2$};
\node at (0.62+9,0.7) {$p_8$};

\draw[->] (1.5,-0.25) -- (2,0); 
\draw[->] (7,-0.25) -- (7.5,0); 

\draw[->] (1.5+3.5,0) -- (2+3.5,-0.25); 

\node at (6.3,-0.15) {$\dots$};

\node at (0,-2.3) {$l_1$};
\node at (3.5,-2.3) {$l_2$};
\node at (9,-2.3) {$l_8$};

\end{tikzpicture}

    
        \caption{An $l$-subpath. The lower boxes $(w_1,\dots, w_n) \in\{\alpha, \beta \}^n$ contain  $n$-copies of the $\alpha$ and $\beta$ pictures from Figure \ref{fig:Lt alpha beta Cw} stacked on top of each other.}
        \label{Figure: Lpath}
    \end{subfigure}
    \caption{The $s$-subpaths, Figure \ref{Figure: Spath}, and $l$-subpaths, Figure \ref{Figure: Lpath}, are paths in $G(\CKh(L_t), \Mlex)$. The vertices of $s$- and $l$-subpaths are global diagrams whose bottom halves match the pictures of $\ref{Figure: Spath}$ and \ref{Figure: Lpath}.}
\end{figure}

We say that $l_1 \to \dots \to l_8$ is an $l$-subpath if when cutting the diagrams of $l_1,\dots, l_8$  above the $6n+8$:th crossing for some $0\leq n\leq t-1$  generates the pictures in Figure \ref{Figure: Lpath}. Similarly, we call $s_1 \to s_2$ a $s$-subpath if a similar cut generates diagrams of Figure \ref{Figure: Spath}. In order to prove that every $p\in P$ contains either an $s$- or an $l$-subpath we will need the following result.

\begin{lemma}[Lemma 3.4 from \cite{kelo2025torus4}]\label{lemma: Mlex i ineq}
    Let $(b \to a) \in \Mlex$ and $a\to b \to c $ be a subpath of a zig-zag path between unmatched cells in the graph $G(\CKh(L_t),\Mlex)$. Then $i(b\to a) \leq i(b\to c)$.
\end{lemma}
Let  $p=(v_1\to\dots \to v_{2m})\in P$ be a zig-zag path which does not contain an $s$-subpath. By observing the diagram of $u_w$ one can see that the $v_1 \to v_2$ has to be of the form $s_1\to s_2$ from Figure \ref{Figure: Spath} or $l_1\to l_2$ from Figure \ref{Figure: Lpath}. Since $p$ does not have an $s$-subpath the latter must hold. Next, let $v_{2k+1} \to v_{2k+2}$ be an edge in $p$ which is of the form $l_1\to l_2$ and for which $i(v_{2k+1}\to v_{2k+2}) \leq i(v_{2j+1}\to v_{2j+2})$ for every other edge $v_{2j+1}\to v_{2j+2}$ in $p$ of the form $l_1 \to l_2$. By verifying that vertices of type $l_2$ are matched to vertices of type $l_3$ we obtain that $v_{2k+3}$ is of type $l_3$. From Lemma \ref{lemma: Mlex i ineq} we get $i(v_{2k+3}\to v_{2k+4}) \leq i(v_{2k+3}\to v_{2k+2})$. The only edges from $v_{2k+3}$ with $i$ value strictly below $i(v_{2k+3}\to v_{2k+2})$ are either of the type $s_1\to s_2$ or $l_1\to l_2$ but neither of these are possible due to $p$ not containing an $s$-subpath and the minimality of $i(v_{2k+1}\to v_{2k+2})$. It follows that $i(v_{2k+3}\to v_{2k+4}) =i(v_{2k+3}\to v_{2k+2})$ and so $v_{2k+4} $ must be of type $l_4$ (as the arrow to type $l_2$ vertex is reversed). By alternatingly repeating the two aforementioned arguments one can deduce that $v_{2k+1} \to \dots \to v_{2k+8}$ is an $l$-subpath. Hence every $p\in P$ contains an $s$- or an $l$-subpath.

The pairing we foreshadowed takes the form of a bijection $\lambda\colon P\to P$. The function $\lambda$ takes in a path $p$, cuts out the first occurrence of an $s$- or $l$-subpath and glues back in an $l$- or an $s$-subpath. If $s$-subpath was cut out, then $l$-subpath is glued in and vice versa. Since $s$- and $l$-subpaths begin and end at similar vertices, the map $\lambda$ is a well defined and since $\lambda(\lambda(p))=p$, it is a bijection.

To confirm that $R(\lambda(p))=-R(p)$ we take a path $p\in P$ where an $s$-path occurs first. Denote $s_1 \to s_2$ and $l_1\to \dots \to l_8$ as the cut out $s$-path of $p$ and the $l$-path which is glued back in by $\lambda$. Perhaps surprisingly, we do not need to fix a sign convention on the complex $\CKh (L_t)$ to see that the signs of $R(s_1\to s_2)$ and $R(l_1\to l_2)$ agree with each other. On the other hand, the signs of $R(l_2\to l_3)$ and $R(l_3\to l_4)$ disagree with each other as do those of $R(l_4\to l_5)$ and $R(l_5\to l_6)$ and further those of $R(l_6\to l_7)$ and $R(l_7\to l_8)$. Putting all of this together gives $R(s_1\to s_2)=-R(l_1\to \dots \to l_8)$ and since the paths agree elsewhere we also get $R(\lambda(p))=-R(p)$ which concludes the proof. \qed

\begin{remark}
    In proving Proposition \ref{proposition: asymptotical strictness of the bounds}, the signs were only needed at the last step. There, it sufficed that every edge $e\colon a\to b$ of the hypercube $\CKh(L_t)$ between two smoothings $a$ and $b$ has a consistent sign no matter what colors are chosen on the loops of $a$ and $b$. This fact also holds for all sign patterns of odd Khovanov homology \cite{MR3071132}. Hence the proof can be carried out in the odd setting and Proposition \ref{proposition: asymptotical strictness of the bounds} holds for odd Khovanov homology as well.
\end{remark}

\begin{remark}
    While much of the construction of the non-vanishing homology cycles $C_w$ was local, the global requirement that all neighboring morphisms of $C_w$ were splits was also essential. This leaves some room for tweaking the construction of $L_t$. By considering braid closures $L'_t$ of $(\sigma_1\sigma_3 \sigma_2^4)^{2t}\sigma_1 \sigma_3 \sigma_2^3$ one can build similar homology cycles $C_w'$ by $1$-smoothing the uppermost $5$ crossings. 
    It follows that the asymptotic lower bounds of Proposition \ref{proposition: asymptotical strictness of the bounds} can be realized with knots and therefore it is not a special feature of $2$ or $4$ component links.
\end{remark}

Typically, the Jones polynomial is much easier to obtain than Khovanov homology both theoretically and computationally. In the proof of Theorem, \ref{thm:alg3braids} we already used this fact to our advantage when obtaining the Khovanov homology of quasi-alternating links from the Jones polynomial and signature. In Proposition \ref{proposition: asymptotical strictness of the bounds} the situation is remarkably opposite: we obtain lower bounds on the ranks of Khovanov homology groups, which lie on a single diagonal, without being able to say anything about the Jones polynomials $J_{L_t}(q)$. Since the links $L_t$ are not homologically thin for $t$ large enough\footnote{
By a result of Islambouli and Willis  \cite{zbMATH06916081}, the complexes of positive braids $(\sigma_1 \sigma_3\sigma_2^4)^t \sigma_1 \sigma_3$ converge to the categorified Jones-Wenzel projector. Hence the Khovanov homology of links $L_t$, suitably normalized, converges to the stable Khovanov homology of $T(4,\infty)$. Since the $\HKh(T(4,\infty))$ is not homologically thin in the small ($\leq 4$) homological degrees (\cite{zbMATH05220874} Theorem 3.4) neither can $L_t$ be for large enough $t$. 
}, we cannot ensure that in decategorifying $\HKh(L_t)$ to $J_{L_t}(q)$  the monomials corresponding to the high rank homology groups will not get canceled in the alternating sum.

\section{Open questions}

In \cite{zbMATH07862431}, Przytycki and Silvero show that the extremal Khovanov homology of a closed $4$-braid $L$ can be computed in polynomial time. Furthermore, there are at most two non-trivial groups $\HKh^{i,q_{\min}}(L)$ and $\HKh^{k,q_{\min}}(L)$ with $k\geq i$, both free abelian and total rank at most $4$ \cite[Cor.1.6]{zbMATH07862431}. A priori, the gap $k-i$ could be arbitrarily large, so our methods do not give a direct proof of their result. Still, one can ask whether a variation of our Algorithm \ref{Algorithm: extremal lines over Z} which only keeps track of minimal quantum degrees can be used to recover their result.

\begin{question}
Is there a modification of Algorithm \ref{Algorithm: extremal lines over Z} which calculates the extremal Khovanov homology of a closed $k$-braid in polynomial time?
\end{question}

The scanning algorithm of Bar-Natan glues complexes of single crossings with simple planar arc diagrams of Type I-IV before simplifying the total complex with iterated Gaussian elimination. If implemented naively, this procedure requires one to build a larger complex before starting the simplification process:
    $$
    C \xrightarrow{\text{ glue }} C\otimes_P \llbracket c \rrbracket \xrightarrow{\text{deloop}} D \xrightarrow{\substack{\text{iterate Gaussian} \\ \text{elimination}}} E. 
    $$
    The Morse complexes we build for Type II, III, and IV diagrams and crossings $c\in\{\crosPos,\crosNeg\}$ in Section \ref{subsec: blocks} could be written explicitly and implemented matrix element by matrix element without first delooping, which accounts to 
    $$
    C \xrightarrow{\substack{\text{build the Morse complex} \\ \text{ associated to } c \text{ and } P }} M(C) \xrightarrow{\substack{\text{iterate Gaussian} \\ \text{elimination}}} E. 
    $$
    It is straight-forward to see that this alternative, more laborious, workflow will still take exponential time on knots with exponentially large Betti numbers. Nevertheless, there ought to be room for optimization and the following question could be investigated empirically.
\begin{question}
    Would implementing a Morse theoretic simplification as part of the scanning algorithm significantly speed up the computation for average-case knots?
\end{question}

In addition to average-case knots, one could hope that computing Khovanov homology would be fast for links of special interest. In the study of Khovanov homology, torus links have inarguably received particular attention and it is natural to ask:
\begin{question}\label{question:scanning torus}
Let $n\geq 4$. Does Bar-Natan's scanning algorithm compute the Khovanov homology of an $n$-strand torus link in polynomial time, either with integral or field coefficients? 
\end{question}

Bar-Natan's scanning algorithm computes the Khovanov homology of $2$- and $3$-strand torus links in polynomial time. This can be seen from the techniques in \cite[\S3,4]{schuetz2025kh3braids}. For $n=4$, the Bar-Natan complexes $\llbracket (\sigma_1\sigma_2 \sigma_3)^n \rrbracket$ of open braids $(\sigma_1\sigma_2 \sigma_3)^n$ can be simplified to complexes with quadratically many generators \cite[Corollary 4.2]{kelo2025torus4} which supports a positive answer to Question \ref{question:scanning torus}, at least when using coefficients from a finite field. An affirmative answer is also suggested by the following proposition, the proof of which was kindly provided to the authors by Qiuyu Ren.  
\begin{proposition}\label{Proposition: polybound on torus}
    There exist $\{c_n\}_{n\geq 2}$, such that $\dim_{\mathbb F} (\HKh(T(n,m);\mathbb F) )\leq c_n\,m^{\lfloor \frac{n}{2} \rfloor}
    $ for all fields $\mathbb F$.
\end{proposition}
\begin{proof}
    Let $L_{n,k}$ denote the braid closure of $(\sigma_1 \dots \sigma_{n-1})^m \sigma_1\dots \sigma_r$ where $k=(n-1)m+r$ and $0\leq r <n-1$. Since $T(n,m)=L_{n, (n-1)m}$, it suffices to show that there exists coefficients $c_n$, such that $\dim_{\mathbb F} (\HKh(L_{n,m};\mathbb F) )\leq c_n m^{\lfloor \frac{n}{2} \rfloor}$. We proceed with an induction on $n$; sufficient coefficients $c_2$ and $c_3$ can be obtained from \cite[\S 6.2]{MR1740682}, \cite[Cor.5.7]{MR4430925} and Theorem \ref{thm:case_omega_5}. Assume next that there exists $c_2,\dots,c_{n-1}$ with $\dim_{\mathbb F} (\HKh(L_{i,m};\mathbb F) )\leq c_i m^{\lfloor \frac{i}{2} \rfloor}$. 
    
    The last crossing of $L_{n,m}$ can be resolved in two ways, which generates two links $L_{n,m}[\smoothingZero]$ and $L_{n,m}[\smoothingOne]$. The link $L_{n,m}[\smoothingZero]$ is isotopic to $L_{n,m-1}$ whereas $L_{n,m}[\smoothingOne]$ is isotopic to some $L_{s,t}$ or $L_{s,t}\sqcup (\text{unknot})$ for some $s\leq n-2 $ and $t\leq m$. The isotopy argument for $L_{n,m}[\smoothingOne]$ is explained well in \cite[Figures 15-20]{zbMATH06708472} although our conventions are mirrored.

    Associated to the last crossing of the braid word $L_{n,m}$, there is an exact triangle
$$
\begin{tikzpicture}
    \node (A) at (0,0) {$\HKh(L_{n,m}[\smoothingOne] ; \mathbb F)$};
    \node (B) at (6,0) {$\HKh(L_{n,m} ; \mathbb F)$};
    \node (C) at (3,-1) {$\HKh(L_{n,m}[\smoothingZero] ; \mathbb F)$.};
    \draw[->] (A) --(B);
    \draw[->] (B) --(C);
    \draw[->] (C) --(A);
\end{tikzpicture}
$$
    When plugging in the aforementioned isotopies, this yields an inequality on the dimensions: 
    $$
    \dim_{\mathbb F} (\HKh(L_{n,m};\mathbb F)) \leq \dim_{\mathbb F} (\HKh(L_{n,m-1};\mathbb F)) +2\dim_{\mathbb F} (\HKh(L_{s,t};\mathbb F)).
    $$
    Hence choosing $c_n=2\max_{i<n} c_i$ gives the desired bound for $n$.
\end{proof}

\bibliography{KnotHomology}
\bibliographystyle{amsalpha}

\end{document}